\documentclass[letterpaper,11pt,reqno]{amsart}
\usepackage{hyperref}

\hypersetup{
     colorlinks   = true,
     citecolor    = black,
     linkcolor    = blue
} 

\usepackage{amsmath}
\usepackage{amssymb}
\usepackage{marginnote}
\usepackage{enumitem}
\usepackage{color}
\usepackage{url}
\usepackage[usenames,dvipsnames]{xcolor}
\usepackage{graphicx}
\usepackage{verbatim}
\usepackage{tikz}




\makeatletter 
\def\@cite#1#2{{\m@th\upshape\bfseries%
[{#1\if@tempswa{\m@th\upshape\mdseries, #2}\fi}]}}
\makeatother 

\usepackage[normalem]{ulem}
\usepackage{cancel}

\theoremstyle{plain}
\newtheorem{thm}{Theorem}[section]

\newtheorem{cor}[thm]{Corollary}

\newtheorem{lem}[thm]{Lemma}
\newtheorem{lemma}[thm]{Lemma}

\theoremstyle{definition}
\newtheorem{definition}[thm]{Definition}

\newtheorem{quest}[thm]{Question}

\newtheorem{rem}[thm]{Remark}

\numberwithin{equation}{subsection}


\newcommand{\nc}{\newcommand}
\newcommand{\rnc}{\renewcommand}

\nc\bA{\mathbb{A}}
\nc\bB{\mathbb{B}}
\nc\bC{\mathbb{C}}
\nc\bD{\mathbb{D}}
\nc\bE{\mathbb{E}}
\nc\bF{\mathbb{F}}
\nc\bG{\mathbb{G}}
\nc\bH{\mathbb{H}}
\nc\bI{\mathbb{I}}
\nc{\bJ}{\mathbb{J}} 
\nc\bK{\mathbb{K}}
\nc\bL{\mathbb{L}}
\nc\bM{\mathbb{M}}
\nc\bN{\mathbb{N}}
\nc\bO{\mathbb{O}}
\nc\bP{\mathbb{P}}
\nc\bQ{\mathbb{Q}}
\nc\bR{\mathbb{R}}
\nc\bS{\mathbb{S}}
\nc\bT{\mathbb{T}}
\nc\bU{\mathbb{U}}
\nc\bV{\mathbb{V}}
\nc\bW{\mathbb{W}}
\nc\bY{\mathbb{Y}}
\nc\bX{\mathbb{X}}
\nc\bZ{\mathbb{Z}}
\nc\wt{\widetilde}

\nc\cA{\mathcal{A}}
\nc\cB{\mathcal{B}}
\nc\cC{\mathcal{C}}
\nc\cD{\mathcal{D}}
\nc\cE{\mathcal{E}}
\nc\cF{\mathcal{F}}
\nc\cG{\mathcal{G}}
\nc\cH{\mathcal{H}}
\nc\cI{\mathcal{I}}
\nc{\cJ}{\mathcal{J}} 
\nc\cK{\mathcal{K}}
\nc\cL{\mathcal{L}}
\nc\cM{\mathcal{M}}
\nc\cN{\mathcal{N}}
\nc\cO{\mathcal{O}}
\nc\cP{\mathcal{P}}
\nc\cQ{\mathcal{Q}}
\nc\cR{\mathcal{R}}
\nc\cS{\mathcal{S}}
\nc\cT{\mathcal{T}}
\nc\cU{\mathcal{U}}
\nc\cV{\mathcal{V}}
\nc\cW{\mathcal{W}}
\nc\cY{\mathcal{Y}}
\nc\cX{\mathcal{X}}
\nc\cZ{\mathcal{Z}}

\newcommand{\defeq}{:=}

\nc{\Tm}{Teichm\"uller}
\nc{\Mod}{\text{Mod}}

\newcommand{\C}{\mathbb C}
\newcommand{\R}{\mathbb R}

\newcommand{\SL}{\mathrm{SL}}

\newcommand{\SO}{\mathrm{SO}}
\newcommand{\gradient}{\nabla}
\newcommand{\laplacian}{\Delta}

\rnc{\Re}{\text{Re}}
\rnc{\Im}{\text{Im}}
\nc{\var}{\text{var}}
\nc{\cov}{\text{covar}}
\nc{\deq}{\stackrel{d}{=}}

\newcommand{\bfH}{\textbf{H}}

\newcommand{\bfv}{\textbf{v}}

\newcommand{\E}[1]{\bE_{\hat{\nu}_\bP}\left[ #1 \right]}

\title{A Central Limit Theorem for the Kontsevich-Zorich Cocycle}

\author[H. Al-Saqban]{Hamid~Al-Saqban}
\address{Institut f\"ur Mathematik, Universit\"at Paderborn}
\email{hqs@umd.edu}

\author[G. Forni]{Giovanni~Forni}
\address{Department of Mathematics, University of Maryland, College Park}
\email{gforni@umd.edu}

\begin{document}

\begin{abstract}
We show that a central limit theorem holds for exterior powers of the Kontsevich-Zorich (KZ) cocycle. In particular, we show that, under the hypothesis that the top Lyapunov exponent on the exterior power is simple, a central limit theorem holds for the lift of the (leaf-wise) hyperbolic Brownian motion to any strongly irreducible, symplectic, $\SL(2,\bR)$-invariant subbundle, that is moreover symplectic-orthogonal to the so-called tautological subbundle. We then show that this implies that a central limit theorem holds for the lift of the \Tm~geodesic flow to the same bundle. 

For the random cocycle over the hyperbolic Brownian motion, we prove under the same hypotheses that the variance of the top exponent is strictly positive. For the deterministic cocycle over the 
\Tm~geodesic flow we prove that the variance is strictly positive only for the top exponent of the first exterior power (the KZ cocycle itself) under the hypothesis that its Lyapunov spectrum is simple.
\end{abstract}
\maketitle

\section{Introduction}

This paper concerns the Kontsevich-Zorich (KZ) cocycle, a much studied dynamical system in the field of Teichm\"uller dynamics. The KZ cocycle has played a major role in addressing multiple questions of physical interest, including, for example, the computation of diffusion rates on wind-tree models \cite{DHL14}, and it itself acts as a renormalizing dynamical system for straight-line flows on translation surfaces. We refer the  reader to the surveys \cite{Z06, FM13, W15}  for an introduction to this rich area of research. 

It is now well-established that Hodge theory, together with classical potential theory, can be brought to bear on this cocycle and its associated Lyapunov exponents, thanks to the pioneering works of M.~Kontsevich and A.~Zorich \cite{KZ97, Kon97}, later developed in~\cite{For02}. In these works, the Hodge norm was introduced
in Teichm\"uller dynamics, and in \cite{For02} it was proved that the logarithm of the Hodge norm is a subharmonic function on all exterior powers of the cocycle, hence the KZ cocycle has positive exponents on strata (see also \cite{For06}, \cite{For11}, \cite{FMZ11}, \cite{FMZ12}). 

The Hodge norm has since played a crucial role in the developments in Teichm\"uller dynamics, in particular in the study of the hyperbolicity properties of the KZ cocycle and of the Teichm\"uller flow, and related questions in the ergodic theory of translations flows and interval exchange transformations. A very partial list of landmark applications of the Hodge norm includes  \cite{ABEM12, EKZ14, EMM15,EM18, Fil16a, Fil16b, Fil17}.

The purpose of this paper is to show that probabilistic potential theory (and thus stochastic calculus), can be applied to study the oscillations of the Hodge norm of the KZ cocycle. In fact, we prove a  (non-commutative) Central Limit Theorem (CLT) for exterior powers of both the random and the deterministic KZ cocycles, and we moreover prove the non-degeneracy of the CLT for the random cocycles and for the first exterior power of the deterministic KZ cocycle (the KZ cocycle itself) under the natural dynamical assumptions of simplicity of the Lyapunov spectrum. Motivated by computer experiments and the results in \cite{Z96, Z97}, the simplicity of the KZ spectrum for all strata of the moduli space of Abelian differentials was conjectured by M.~Kontsevich and A.~Zorich in \cite{KZ97, Kon97}. It was then established by A.~Avila and M.~Viana in \cite{AV07}, and, in genus~$2$, for all $\SL(2,\R)$-invariant orbifolds, by M.~Bainbridge in \cite{B07}. 

The problem of finding oscillations of the KZ cocycle has been the subject of recent interest: in \cite{A21}, a mechanism to produce oscillations of the KZ cocycle was presented, where the basepoint is a fixed surface, and a more refined mechanism was developed by J.~Chaika, O.~Khalil and J.~Smillie in their work on the ergodic measures of the \Tm~ horocycle flow \cite{CKS21}. We expect that the deterministic central limit theorem presented here can be brought to bear on the scope of these results.

 The probabilistic ideas that inspired our approach, and their application to geodesic flows, go back to the work of Y.~Le Jan \cite{L94} (see also \cite{FL12}). We refer the reader to \cite{RY} for a comprehensive  introduction to stochastic calculus, including fundamental results exploited in this paper. 

The approach we follow to prove the CLT for the random cocycle also relies crucially on the analysis of the Brownian semigroup to solve a (leaf-wise) Poisson equation, by a method reminiscent of \cite{L95}. In fact we prove exponential mixing for the lift of the Brownian motion to the Hodge bundle, leveraging the exponential mixing of the \Tm~geodesic flow, due to A.~Avila, S.~Gou\"ezel and J.-C.~Yoccoz for strata \cite{AGY06} and to A.~Avila and S.~Gou\"ezel for all $\SL(2,\R)$-invariant orbifolds \cite{AG13}.
 
The CLT for the deterministic cocycle is then derived from the corresponding result for the random cocycle by a stopping time argument based in part on an asymptotic estimate due to A.~Ancona \cite{Anc90}.

We point out that in the setting of products of independent and identically distributed random matrices, the central limit theorem was  established, in varying levels of generality, by Bellman in \cite{B54}, H. Furstenberg and H. Kesten in \cite{FK60}, Tutubalin in \cite{T77}, Le Page in \cite{LP82}, Y. Guivarc'h and A. Raugi in \cite{GR85},  I. Ya. Golsheid and G. A. Margulis in \cite{GM89}, Hennion in \cite{H97}, Jan in \cite{J00}, and more recently, and under an optimal finite second moment condition, by Y.~Benoist and 
J.-F.~Quint in \cite{BQ16}. The central limit theorem was also established for solutions of linear stochastic differential equations with Markovian coefficients by P.~Bougerol in \cite{B88}. On the other hand, to the best of our knowledge, there are no comparable works in the setting of deterministic cocycles over (non-uniformly) hyperbolic flows, such as the one we treat here. We point to \cite{DKP21}, \cite{FK21}, \cite{PP22} for some results on the central limit theorem in this direction. However, the cited results are established for cocycles over shifts of finite type (SFT's), while it is well known that the Teichm\"uller flow has a symbolic representation as a suspension flow over a Markov shift on a countable alphabet. 
While not the original aim of this paper, we note that our work addresses, if ever so incrementally, this dearth of central limit theorem results for deterministic cocycles over non-uniformly hyperbolic flows (a related result for the KZ cocycle, based on the study of a transfer operator via anisotropic Banach spaces, has been recently announced by O.~Khalil). We finally remark that the positivity of the variance for the particular case of the (deterministic) KZ cocycle, that we prove in this paper under certain hypotheses, would remain a non-trivial question, even if a general theorem were available.

In another direction, the paper of J.~Daniels and B.~Deroin \cite{DD19} adapted the \Tm~dynamics methodology to more general compact K\"ahler manifolds, and one in which the methods in this paper are applicable, provided that we can prove existence of a solution to Poisson's equation for the corresponding Laplacian. 

In \cite{DFV17}, D.~Dolgopyat, B.~Fayad and I.~Vinogradov proved a central limit theorem for the Siegel transform of sufficiently regular observables for the diagonal action on the space of
lattices. Their methods are in fact much more general and  imply in particular a Central Limit Theorem for pushforwards of (unstable) unipotent arcs with respect to the uniform distribution on almost every unipotent orbit \cite[Theorem 7.1, Corollary 7.2]{DFV17}. It would be interesting to prove exponential mixing for the action of the Teichm\"uller flow on the projectivized Hodge bundle {(see also Question \ref{expmix}), with the aim of applying a multiplicative generalization of their results to the KZ cocycle.

After the appearance of a first
draft of our paper, F. Arana-Herrera and the second-named author in \cite{AF24} proved a central limit theorem for  sections of the Hodge bundle and a mixing central limit theorem for the Kontsevich-Zorich cocycle, with the results of this paper as a crucial input in a more general approach to
mixing limit distributions results.

\section{Statement of results}
Let $\pi: \bP(\textbf{H}) \to X$ be the projectivization of a strongly irreducible $\SL(2,\bR)$-invariant symplectic subbundle of the absolute (real) Hodge bundle over an $\SL(2,\bR)$ orbit closure $X$, whose fiber over each point in $X$ is $H^1(S,\bR)$, and with $\nu$ an ergodic $\SL(2,\bR)$-invariant probability measure on~$X$. The Kontsevich-Zorich cocycle is the lift of the $\SL(2,\bR)$ action  to~$\bP(\textbf{H})$, obtained by parallel transport with respect to the Gauss-Manin connection. Furthermore, the cocycle acts symplectically since it preserves the intersection form on $H^1(S,\bR)$. 

For our purposes, we will be concerned with $k$-th exterior powers $\textbf{H}^{(k)}$ of strongly irreducible invariant symplectic components $\textbf{H}$ of the Hodge bundle, which are symplectic orthogonal to the tautological subbundle (spanned for every $\omega \in X $ by $[\Re~\omega]$ and  $[\Im~\omega])$. 

We will denote by $\bP (\textbf{H}^{(k)})$ the projectivization
of the bundle $\textbf{H}^{(k)}$. It follows by a result of Bonatti-Eskin-Wilkinson \cite{BEW20} that any $P$-invariant ($P<\SL(2,\R)$ is the maximal parabolic subgroup generated by the diagonal subgroup and by the forward unstable unipotent subgroup) probability measure $\hat{\nu}$ on this bundle which projects to $\nu$ on $X$ is such that, for $\nu$-a.a. $\omega\in X$, the conditional measure on $\bP (\textbf{H}^{(k)}_\omega)$ is the Dirac mass supported on the unstable Oseledets subspace $E^+_k(\omega)\subset \textbf{H}^{(k)}_\omega $. 

Let $g_t=\begin{pmatrix}e^{t}&0\\0&e^{-t}\end{pmatrix}$ denote the diagonal subgroup of $\SL(2,\bR)$, whose action on the orbit closure $X$ yields the Teichm\"uller flow.
Let $\lambda_1\geq \lambda_2 \geq \dots \geq \lambda_h$ denote the non-negative Lyapunov exponents of the Kontsevich--Zorich cocycle on a symplectic strongly irreducible subbundle
$\textbf{H}$ of dimension $2h\in \{2,\dots, 2g\}$. Since the cocycle is symplectic on $\textbf{H}$, the top $h$ exponents determine the entire Lyapunov spectrum.

For $\omega\in X$, let $\textbf{v}_\omega$ in $\bP (\textbf{H}^{(k)}_\omega)$  be any $k$-dimensional exterior vector (of dimension $k\leq h$) in $\textbf{H}_\omega$. Let $\nu^*$ denote the probability measure on $\bP (\textbf{H}^{(k)})$ which projects to the $\SL(2,\R)$-invariant
measure $\nu$ on $X$ with conditional measures  equal to the
Lebesgue measure on $\bP (\textbf{H}^{(k)}_\omega)$ for $\nu$-almost all $\omega \in X$. We note that the family \((g_t)_{\ast}\nu^{\ast}\) of push-forward measures of \(\nu^{\ast}\) converges to $\hat \nu$, in the weak\(^\ast\) topology under the action of \(g_t\), as $t\to\infty$.

Let  $\|\cdot\|^{(k)}_{\pi(\cdot)}$ denote the $\SO(2,\bR)$-invariant Hodge norm on 
$\bP (\textbf{H}^{(k)})$ (see section \ref{hodgenorm} for the definition) and define 
$\sigma_k : \SL(2,\bR) \times \bP (\textbf{H}^{(k)}) \to \bR$ by \[\sigma_k(g,\textbf{v}) = \log \frac{\|g \textbf{v}\|^{(k)}_{g\pi(\textbf{v})}}{\|\textbf{v}\|^{(k)}_{\pi(\textbf{v})}}.\]

For $\nu^*\text{-a.e.}$~ $\text{\bf{v}} = (\omega,\textbf{v}_\omega)$, it is a consequence of the multiplicative ergodic theorem that \[\lim_{T\to\infty} \frac{\sigma_k(g_T, \textbf{v})}{T}  = \sum_{i=1}^k \lambda_i\,.\] 

Our main result is the following: 
\begin{thm}\label{dclt}
Let $\text{\bf{H}}$ be a strongly irreducible, symplectic, $\SL(2,\R)$-invariant subbundle, which is symplectic orthogonal to the tautological subbundle. If $\lambda_k > \lambda_{k+1}$, then there exists a real number $V^{(k)}_{g_\infty} \geq 0$ such that
\begin{align*}
\lim_{T\to\infty}\nu^*\left(\left\{\text{\bf{v}}\in \bP(\text{\bf{H}}^{(k)}) ~:~ a \leq \frac{1}{\sqrt{T}} \left(\sigma_k(g_{T}, \text{\bf{v}})- T(\sum_{i=1}^k \lambda_i) \right) \leq b\right\}\right) 
\\= \frac{1}{\sqrt{2\pi V^{(k)}_{g_\infty}}}\int_a^b\exp(-x^2/ V^{(k)}_{g_\infty})dx.
\end{align*}
Moreover, if the Lyapunov spectrum is simple, then $V^{(1)}_{g_\infty} > 0$.
\end{thm}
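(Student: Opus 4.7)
The plan is to proceed in two main stages: first establish a CLT for the \emph{random} cocycle, namely for $\sigma_k$ evaluated along the lift of the leafwise hyperbolic Brownian motion to $\bP(\textbf{H}^{(k)})$, and then transfer this random CLT to the deterministic statement along Teichm\"uller geodesics by a stopping-time argument. The non-degeneracy of $V^{(1)}_{g_\infty}$ under simplicity of the KZ spectrum will be handled separately at the end.

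For the random CLT, I would follow the stochastic-calculus strategy pioneered by Y.~Le Jan. Applying It\^o's formula to $\log \|\cdot\|^{(k)}$ along the diffusion on $\bP(\textbf{H}^{(k)})$ lifted from the leafwise hyperbolic Brownian motion on $X$, one obtains a decomposition
\[
\sigma_k(g_T,\textbf{v}) \;=\; T\sum_{i=1}^k \lambda_i \;+\; M_T \;+\; R_T,
\]
where $M_T$ is a local martingale whose quadratic variation is controlled by the Hodge-theoretic gradient of the log Hodge norm along each fiber, and $R_T$ is a drift remainder. The central analytic step is to solve a leafwise Poisson equation $\Delta_{\hypo} u = \Psi_k$ on $\bP(\textbf{H}^{(k)})$, where $\Psi_k$ is the mean-corrected drift extracted from the It\^o expansion. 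Solvability in an appropriate Sobolev space rests on the spectral gap of the leafwise hyperbolic Laplacian \cite{AGY06, AG13}, combined with an $\SL(2,\bR)$-representation-theoretic decomposition fiber by fiber. With $u$ in hand, the corrected process $\sigma_k - u \circ g_T + u$ is a local martingale, and the martingale CLT (via a Dambis--Dubins--Schwarz time change, and ergodicity of the Brownian motion under $\hat{\nu}$) yields a Gaussian limit with an explicit variance $V^{(k)}_{g_\infty}$.

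To pass to the deterministic statement, I would use the standard Cartan-type decomposition of hyperbolic Brownian motion into a radial part diverging with almost sure rate one and a random angular component, whose distribution at large radii concentrates on the uniform measure on the boundary circle via the asymptotic estimate of A.~Ancona \cite{Anc90}. Let $\tau_T$ denote the first time the Brownian motion reaches radial coordinate $T$. Using $\SO(2,\bR)$-invariance of $\hat{\nu}$, the distribution of $\sigma_k$ at the stopped time $\tau_T$ can be compared with that of $\sigma_k(g_T,\cdot)$ on the deterministic side; Ancona's estimate, coupled with a Doob $h$-transform / Markov renewal comparison, then quantifies the discrepancy with an error of lower order than $\sqrt{T}$, so the deterministic CLT inherits both its centering and its variance $V^{(k)}_{g_\infty}$ from the random one.

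For the non-degeneracy claim, I would argue by contradiction: if $V^{(1)}_{g_\infty}=0$, then the martingale component of the It\^o decomposition must vanish in the scaling limit, forcing $\sigma_1(g_T,\cdot) - \lambda_1 T$ to reduce to a measurable coboundary $u \circ g_T - u$ along $\hat{\nu}$-a.e.\ Brownian trajectory, and hence, after applying the stopping-time identification, also along Teichm\"uller geodesics. Under simplicity of the spectrum, the top Oseledets subbundle is a measurable line, and the growth of the Hodge norm on that line being a measurable coboundary conflicts with the strict subharmonicity properties of the log Hodge norm on symplectic-orthogonal, strongly irreducible subbundles established in \cite{For02}. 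The main obstacle I anticipate is in the random-to-deterministic passage: Ancona's estimates must absorb the Brownian fluctuations with an error that is uniformly $o(\sqrt{T})$, which is delicate because of the non-compactness of the moduli space and the non-uniform hyperbolicity of the Teichm\"uller flow. The non-degeneracy step is also subtle, since it requires ruling out all measurable coboundary degenerations using only the information carried by simplicity of the Lyapunov spectrum.
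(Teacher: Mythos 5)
Your outline of the random CLT (It\^o decomposition, Poisson equation for a perturbed Laplacian solved via the spectral gap of Avila--Gou\"ezel--Yoccoz/Avila--Gou\"ezel and $\SL(2,\bR)$ representation theory, then the martingale CLT) matches the paper's Section~\ref{randomproof} in both structure and key ingredients. The problems are in the two later stages.

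\textbf{Random-to-deterministic transfer.} You assert that Ancona's estimate quantifies the discrepancy between the stopped Brownian motion and the geodesic ``with an error of lower order than $\sqrt{T}$,'' and that the deterministic CLT therefore ``inherits \dots its variance from the random one.'' Ancona's estimate does control $\sigma_k(\rho_{\tau_T},\text{\bf v}) - \sigma_k(g_T r_\theta,\text{\bf v}) = O(\log T)$, but it does \emph{not} control the gap between the stopped process $\sigma_k(\rho_{\tau_T},\text{\bf v})-T\lambda_{(k)}$ and the fixed-time process $\sigma_k(\rho_T,\text{\bf v})-T\lambda_{(k)}$. Writing $T = \tau_T + W^{(1)}_{\tau_T} + \eta_{\tau_T}$, the centering picks up the term $-\lambda_{(k)}W^{(1)}_{\tau_T}$, which is of order exactly $\sqrt{T}$, \emph{not} $o(\sqrt{T})$, and which is correlated with the martingale $M_{\tau_T}$. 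The paper computes this covariance (Lemma~\ref{covariancelem}) and gets $V^{(k)}_{g_\infty} = V^{(k)}_{\rho_\infty} + \lambda_{(k)}^2 - 2\lambda_{(k)}^2 = V^{(k)}_{\rho_\infty} - \lambda_{(k)}^2$. Your proposal misses this correction entirely; without it, the variance you would produce for the geodesic flow is simply wrong.

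\textbf{Non-degeneracy.} Your argument starts from ``$V^{(1)}_{g_\infty}=0 \Rightarrow$ the martingale component must vanish in the scaling limit.'' This is false precisely because of the covariance correction above: the martingale's asymptotic variance is $V^{(1)}_{\rho_\infty} = V^{(1)}_{g_\infty} + \lambda_{(1)}^2 \geq \lambda_1^2 > 0$, so the martingale never degenerates when $\lambda_1>0$. The deterministic variance can a priori vanish while the martingale stays non-trivial, because the $W^{(1)}_{\tau_T}$ term and the martingale partially cancel. Moreover, even granting a coboundary relation $\Psi_1\circ E^+_1 - (\lambda_{(1)},0) - 2\cD U = 0$, it is not true that this ``conflicts with strict subharmonicity'': subharmonicity gives $\lambda_1>0$ but by itself says nothing about the fluctuation term being a coboundary. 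The paper instead derives from the vanishing condition a Lipschitz regularity of $\Psi_1\circ E^+_1$ along stable horocycle orbits (Lemma~\ref{lemma:Lipschitz}) and then runs a ``freezing argument'' (after~\cite{CF20}) to build a measurable, proper, $\SL(2,\R)$-invariant subbundle of $\bP(\text{\bf H})$, contradicting strong irreducibility under simplicity. That dynamical construction is the essential content of the positivity step and is not replaceable by an appeal to subharmonicity alone.
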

\begin{rem}
The statement also holds in the event that $V^{(k)}_{g_\infty} = 0$, and in that case the resulting distribution would  be  a delta distribution. The positivity of the variance holds for $2$-dimensional subbundles with strictly positive top Lyapunov exponent (for instance on the symplectic orthogonal of the tautological subbundle in genus $2$ for any $\SL(2,\R)$-invariant measure), as in this case the simplicity condition on the top  exponent is trivially satisfied.
\end{rem}
\begin{rem}
The simplicity of the Lyapunov spectrum is established for the canonical Masur-Veech measures on strata by A.~Avila and M.~Viana in \cite{AV07}, and we remark that, in genus $2$, this is established for all ergodic $\SL(2,\bR)$-invariant probability measures by M.~Bainbridge in \cite{B07}.
\end{rem}
\begin{rem}
The assumption that $\textbf{H}$ is symplectic orthogonal to the tautological subbundle precludes the equality $2h=2g$, which in turn precludes the equality $k = g$. It also follows by the spectral gap property of the Kontsevich-Zorich cocycle that for any $g_t$-invariant and ergodic probability measure, $\lambda_1<1$ \cite{For02} (see also \cite[Corollary 2.2]{FMZ12}).
\end{rem}

Since in Theorem \ref{dclt} we randomize \emph{both} the Abelian differential $\omega$ and vector $\bfv_\omega \in \bP(\bfH_\omega^{(k)})$ with respect to the measure $\nu^*$, it is natural to ask if our results also hold for future-Oseledets-generic sections of $\bP(\bfH^{(k)})$ (see Definition \ref{fut-gen}). In particular, for the deterministic cocycle, we derive in Section \ref{sectionsclt}:

\begin{cor}
Under the hypothesis of Theorem \ref{dclt}, there exists a real number $V^{(k)}_{g_\infty} \geq 0$ such that for any future-Oseledets-generic section $\bfv=(\omega,\bfv_{\omega})$ of $\bP(\bfH^{(k)})$, we have
\begin{align*}
\lim_{T\to\infty} \nu\left(\left\{\omega\in X ~:~ a \leq \frac{1}{\sqrt{T}} \left(\sigma_k(g_{T}, \bfv_\omega)- T(\sum_{i=1}^k \lambda_i) \right) \leq b\right\}\right) 
\\= \frac{1}{\sqrt{2\pi V^{(k)}_{g_\infty}}}\int_a^b\exp(-x^2/ V^{(k)}_{g_\infty})dx.
\end{align*}
Moreover, if the Lyapunov spectrum is simple, then $V^{(1)}_{g_\infty} > 0$.
\end{cor}

To prove Theorem \ref{dclt}, we will first work with the hyperbolic Brownian motion, which is the diffusion process generated by the foliated hyperbolic Laplacian. Let $\rho$ be a (foliated) hyperbolic Brownian motion trajectory starting at a (generic) basepoint $\omega \in X$, defined almost everywhere with respect to a probability measure $\bP_\omega$ on the space of such trajectories $W_\omega$. This process is in fact defined on $X_* = \SO(2,\bR)\backslash X$. The space $X_*$ gives rise to a fibered space $X^{W}_*$ whose fiber over each point $\omega$ in $X_*$ is $W_\omega$, and which also supports a measure $\nu_{\bP} \defeq \nu \otimes \bP$, whose projection on $X_*$ is $\nu$  and whose conditional measure over a point $\omega$ is $\bP_\omega$. We can thus similarly define the product $W$-Hodge bundle $\bP^W(\text{\bf{H}}^{(k)})$, whose fiber over each point $(\omega,\rho)$ in $X^{W}_*$ is $\textbf{H}^{(k)}_\omega$. A pair $(\rho,\text{\bf{v}}) \in \bP^W(\textbf{H}^{(k)})$ is thus defined to be the lift of the path $\rho$ (starting at $\omega$) to $\bP^W(\textbf{H}^{(k)})$, obtained by parallel transport with respect to the Gauss-Manin connection.  This in turn would also give rise to a measure $\nu^*_{\bP}\defeq \nu^* \otimes \bP$, whose projection on $\bP(\bfH^{(k)})$ is $\nu^*$ and whose conditional measure over a point $\text{\bf{v}}$ is $\bP_\omega$. We therefore also have

\begin{thm}\label{rclt}
Let $\text{\bf{H}}$ be a strongly irreducible, symplectic, $\SL(2,\R)$-invariant sub-bundle, which is symplectic orthogonal to the tautological sub-bundle. If $\lambda_k > \lambda_{k+1}$, then there exists a real number $V^{(k)}_{\rho_\infty} > 0$ such that
\begin{align*}
\lim_{T\to\infty}\nu^*_{\bP}\left(\left\{(\rho,\text{\bf{v}}) \in \bP^W(\text{\bf{H}}^{(k)}) : a \leq \frac{1}{\sqrt{T}} \left(\sigma_k(\rho_{T}, \text{\bf{v}})- T(\sum_{i=1}^k\lambda_{i})\right) \leq b\right\}\right) 
\\ = \frac{1}{\sqrt{2\pi V^{(k)}_{\rho_\infty}}}\int_a^b\exp(-x^2/V^{(k)}_{\rho_\infty})dx.\end{align*}
\end{thm}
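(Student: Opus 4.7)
The plan is to implement the classical martingale method for central limit theorems in this foliated, infinite-dimensional setting, along the lines pioneered by Y.~Le Jan \cite{L94}. As a first step I would apply It\^o's formula (in the leafwise hyperbolic sense) to $\log\|\cdot\|^{(k)}$ along the lifted Brownian motion on $\bP^W(\textbf{H}^{(k)})$, producing a semimartingale decomposition
\begin{equation*}
\sigma_k(\rho_t,\textbf{v}) \;=\; \int_0^t \varphi_k(\rho_s\cdot\textbf{v})\,ds \;+\; M_t,
\end{equation*}
where $\varphi_k$ is essentially the leafwise Laplacian of $\log\|\cdot\|^{(k)}$ on $\bP(\textbf{H}^{(k)})$ --- a Kontsevich--Forni type quantity --- and $M_t$ is a continuous local martingale with respect to the natural Brownian filtration. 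The $\SO(2,\bR)$-invariance of $\hat\nu$ together with a Furstenberg--Khasminskii--Kaimanovich--Ledrappier type identity should identify the ergodic mean $\int \varphi_k\,d\hat\nu = \sum_{i=1}^k \lambda_i$.

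The decisive second step is to reduce the drift to a coboundary by solving the leafwise Poisson equation
\begin{equation*}
\Delta_{\mathcal{F}} F_k \;=\; \varphi_k - \sum_{i=1}^{k}\lambda_i
\end{equation*}
on $\bP(\textbf{H}^{(k)})$, where $\Delta_{\mathcal{F}}$ is the foliated hyperbolic Laplacian generating the Brownian motion. Here I would invoke the spectral gap of $\Delta_{\mathcal{F}}$ on $X$ due to Avila--Gou\"ezel--Yoccoz \cite{AGY06} and Avila--Gou\"ezel \cite{AG13}, combined with an analysis of the $\SL(2,\bR)$-unitary representation on $L^2$-sections of $\bP(\textbf{H}^{(k)})$; strong irreducibility and symplectic orthogonality to the tautological subbundle are the structural hypotheses that rule out the trivial subrepresentations on the fibres which would obstruct invertibility of $\Delta_{\mathcal{F}}$ on the orthogonal complement of the constants. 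Once a (suitably bounded) $F_k$ has been produced, applying It\^o's formula to $F_k$ and subtracting yields
\begin{equation*}
\sigma_k(\rho_T,\textbf{v}) - T\sum_{i=1}^{k}\lambda_i \;=\; N_T \;+\; \bigl(F_k(\rho_T\cdot\textbf{v}) - F_k(\textbf{v})\bigr),
\end{equation*}
where $N_T$ is a martingale and the coboundary term is $O(1)$, hence negligible after rescaling by $1/\sqrt{T}$.

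The CLT itself then falls out of the martingale CLT: by the Dambis--Dubins--Schwarz theorem $N_T$ is a time-changed standard Brownian motion, and Birkhoff's theorem applied to the ergodic diffusion $(\rho_s\cdot\textbf{v})$ on $\bP(\textbf{H}^{(k)})$ gives $\langle N\rangle_T/T \to V^{(k)}_{\rho_\infty}$ almost surely, with $V^{(k)}_{\rho_\infty}$ an explicit quadratic functional of $F_k$ and of the leafwise gradient of $\log\|\cdot\|^{(k)}$. For the strict positivity, a null quadratic variation would force $N_T\equiv 0$, whence $\sigma_k(\rho_t,\textbf{v}) - t\sum_i\lambda_i$ would be a bounded coboundary; this I would rule out using the non-triviality of the unitary $\SL(2,\bR)$-representation on $L^2(\bP(\textbf{H}^{(k)}))$ under strong irreducibility, exploiting precisely the simplicity hypothesis $\lambda_k>\lambda_{k+1}$ to ensure that the leading Oseledets line on $\wedge^k\textbf{H}$ is genuine and varies non-trivially.

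The main obstacle is without doubt the Poisson equation: the drift $\varphi_k$ is not obviously bounded near the Deligne--Mumford boundary of moduli space, so solvability with sufficient regularity requires choosing a weighted Sobolev framework in which the spectral gap of $\Delta_{\mathcal{F}}$ can be invoked effectively, and in which the $\SL(2,\bR)$ representation-theoretic decomposition of the fibrewise structure can be meaningfully combined with the base spectral gap. A secondary subtlety is the strict positivity of the variance, where the random setting --- unlike the deterministic one of Theorem~\ref{dclt} --- benefits crucially from the smoothing action of the full hyperbolic Laplacian on $\SL(2,\bR)$-orbits, which is exactly why $V^{(k)}_{\rho_\infty}>0$ can be asserted here under the sole hypothesis $\lambda_k>\lambda_{k+1}$.
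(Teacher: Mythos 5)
Your broad architecture (It\^o decomposition $\to$ Poisson-equation coboundary reduction $\to$ martingale CLT) does coincide with the paper's, but there are two substantive gaps, and the one you yourself flag as "the main obstacle" is resolved in the paper by a trick you do not anticipate.

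The first and principal gap: the paper does not solve the Poisson equation on $\bP(\textbf{H}^{(k)})$ at all, and your proposed spectral-gap/representation-theoretic analysis of $L^2$-sections over $\bP(\textbf{H}^{(k)})$ is not what occurs. The equation actually solved (equation~\eqref{poissoneq}, via Corollary~\ref{CorAGT}) lives on the base $X$: it is $\cL U(\omega) = \tfrac12\bigl(\Phi_k(\omega, E^+_k(\omega)) - \lambda_{(k)}\bigr)$, where $E^+_k$ is the unstable Oseledets $k$-plane. By evaluating the Kontsevich--Forni drift $\Phi_k$ along the Oseledets section one obtains a genuine function on $X$, to which the Avila--Gou\"ezel--Yoccoz / Avila--Gou\"ezel spectral gap on $L^2(X,\nu)$ applies directly. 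The residual mismatch $\Phi_k(\rho_s, \mathbf{v}) - \Phi_k(E^+_k(\bar\rho_s))$ is then shown to be negligible in $L^2(\hat\nu_\bP)$ because the lifted $k$-frame converges exponentially to $E^+_k$ by Oseledets' theorem and $\Phi_k$ is Lipschitz. This reduction to $X$ is the idea you are missing, and it is precisely what removes the difficulty you correctly identify as the chief obstacle. A secondary but nontrivial point is that the operator is not the constant-coefficient $\SL(2,\R)$-Laplacian $-(X^2+Y^2)$: in Lie-algebraic terms the radial generator of the hyperbolic Brownian motion is $\cL_t = -(X^2+Y^2+\coth^2(2t)\Theta^2) - 2\coth(2t)Y\Theta$, whose $t\to\infty$ limit $\cL = -(X^2 + (Y+\Theta)^2 + X)$ is non-self-adjoint and degenerate; the paper proves solvability and closedness for $\cL$ in Appendix~\ref{poisson}, and separately controls the error $\cL_t - \cL$, which decays exponentially in $t$. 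Treating "$\Delta_{\mathcal F}$" as a single well-understood constant operator elides this entirely.

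The second gap concerns the strict positivity $V^{(k)}_{\rho_\infty}>0$. Your proposed argument --- vanishing quadratic variation forces a bounded coboundary, to be ruled out via non-triviality of an $\SL(2,\R)$-representation on $L^2(\bP(\textbf{H}^{(k)}))$ --- is not the paper's argument and as sketched is unsupported. The paper's route (Section~\ref{vrclt}) is indirect and much slicker: it first proves the deterministic CLT of Theorem~\ref{dclt} and along the way derives the identity $V^{(k)}_{g_\infty} = V^{(k)}_{\rho_\infty} - \lambda_{(k)}^2$; since by construction $V^{(k)}_{g_\infty} \geq 0$, one reads off $V^{(k)}_{\rho_\infty} \geq \lambda_{(k)}^2 \geq \lambda_1^2 > 0$ with no further work. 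Your coboundary-rigidity intuition does surface, but only in the much harder analysis of the deterministic variance $V^{(1)}_{g_\infty}$ in Section~6.2, via the freezing argument, and there it requires simplicity of the entire Lyapunov spectrum rather than merely $\lambda_k > \lambda_{k+1}$.
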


\begin{rem}
Observe that for $g=2$, the symplectic orthogonal bundle to the tautological bundle has dimension $2$, hence it is strongly irreducible. Our two results reduce to ones that concern the second Lyapunov exponent of the Kontsevich-Zorich cocycle on the full Hodge bundle.  
\end{rem}

In addition to the Hodge theoretic techniques that we employ, some ingredients of our proof include 

\begin{itemize}
\item Exponential mixing of the \Tm~geodesic flow (due to Avila-Gou\"ezel-Yoccoz \cite{AGY06} for strata and Avila-Gou\"ezel \cite{AG13} for all $\SL(2,\R)$-invariant orbifolds) to derive the existence of a unique zero-average solution of a Poisson equation (see 
Appendix~\ref{poisson});
\item elementary stochastic calculus to extract and control the necessary oscillations;
\item and an asymptotic estimate due to Ancona \cite{Anc90} to relate the geodesic flow with the Brownian motion.
\end{itemize}

\section*{Acknowledgements} This paper is an outgrowth of the PhD dissertation of the first-named author, written under the direction of the second-named author, at the University of Maryland, College Park. The authors thank the anonymous referees for their careful reading of the paper, and for many helpful comments and suggestions that led to a substantial improvement in the exposition. The first-named author is grateful to Jon Chaika, Dima Dolgopyat, and Bassam Fayad for their interest in this project and for insightful and enlightening conversations. 
The first-named author acknowledges support from the NSF grants DMS 1600687, DMS 1956049, DMS 2101464, and the the Deutsche Forschungsgemeinschaft (DFG) Grant No. 422642921 (Emmy Noether group “Microlocal Methods for Hyperbolic Dynamics”). The second-named author acknowledges support from the NSF grants DMS 1600687 and DMS 2154208.

  \section{Preliminaries}
  \subsection{Translation surfaces}
Let $S$ be a Riemann surface of genus $g\geq 2$, and $\omega$ a holomorphic $1$-form on $S$. 
The pair $(S,\omega)$ is called a {\it translation surface}, since $\omega$ induces an atlas whose coordinate changes are translations on $\C\equiv \R^2$.
In other terms, $\omega$ gives a flat metric with finitely many conical singularities and trivial holonomy on $S$, and
the zero set of $\omega$ characterizes the singularity set of the conical metric. The area of a translation surface is given by $\int_S \omega \wedge \overline{\omega}$. We will refer to the pair $(S,\omega)$ as just $\omega$.

\subsection{Moduli Space}
Let $ \cT\cH_g$ be the \Tm~space of unit-area translation surfaces of genus $g \geq 2$, and let $ \cH_g =  \cT\cH_g / \Mod_g$ be the corresponding moduli space, where $\Mod_g$ denotes the mapping class group. The space $\cH_g$ is partitioned into strata  $\cH_{\kappa}$, which consist of all unit-area translation surfaces whose conical singularities have total angles $2\pi(1+\kappa_1), \dots, 2\pi(1+\kappa_s)$, as $\kappa=(\kappa_1, \dots, \kappa_s)$ varies over multi-indices with $\sum \kappa_i = 2g-2$.

Local period coordinates on each stratum are defined by the map which takes every holomorphic $1$-form  $\omega$ to its cohomology class $[\omega]$ in $H^1(S, \Sigma_\omega, \C)$, relative to the set $\Sigma_\omega$ of its zeros.
The set of all period coordinate maps defines an affine structure on each stratum, since all changes of coordinates are given by affine maps. 

\subsection{SL(2,R) action}
There is a natural action of $\SL(2,\bR)$ on the space of all translation surfaces which descends to their Teichm\"uller and moduli spaces. It is proved in \cite{EM18, EMM15} that, for any $\omega \in \cH(\kappa)$, the closure $X$ of $\SL(2,\bR) \cdot \omega$ is an affine invariant suborbifold, and supports a unique ergodic $\SL(2,\bR)$-invariant probability measure $\nu$ in the Lebesgue measure class, given by the normalized Lebesgue measure in period coordinates.

\subsection{Kontsevich-Zorich cocycle}
Let $\widehat{\textbf{H}}_\kappa(S, \bR) = \cT\cH_\kappa \times H^1(S,\mathbb{R})$, and for every  $g\in \SL(2,\bR)$, let $\widehat{g} : \widehat{\textbf{H}}_\kappa(S, \bR) \to \widehat{\textbf{H}}_\kappa(S, \bR)$ be the trivial cocycle map defined as $$\widehat{g}(\omega,c) = (g\omega, c)\,, \quad \text{ for }\omega \in  \cT\cH_\kappa \text{ and } c \in H^1(S,\mathbb{R})\,,.$$ 
The absolute (real) Hodge bundle is given by $\textbf{H}_\kappa(S, \bR) = \widehat{\textbf{H}}_\kappa(S, \bR) / \Mod_g$ and the Kontsevich-Zorich cocycle $g$ is the projection of $\widehat{g}$ to $\textbf{H}_\kappa(S, \bR)$.

\subsection{Hodge inner product and the second fundamental form}\label{hodgenorm} Given two holomorphic $1$-forms $\omega_1,\omega_2$ in $\Omega(S)$, where $\Omega(S)$ is the vector space of holomorphic $1$-forms on $S$, the Hodge inner product is given by the formula \[\langle\omega_1,\omega_2 \rangle := \frac{i}{2}\int_S \omega_1 \wedge \overline{\omega_2}\]
Moreover, the Hodge representation theorem implies that for any given cohomology class $c \in H^1(S,\bR)$, there is a unique holomorphic $1$-form $h(c) \in \Omega(S)$, such that $c = [\Re~ h(c)]$ (cf. \cite{FMZ12}). The Hodge inner product for two real cohomology classes $c_1,c_2 \in H^1(S,\bR)$
is defined as \[A_\omega(c_1,c_2 ) := \langle h(c_1),h(c_2) \rangle = \frac{i}{2}\int_S h(c_1) \wedge \overline{h(c_2)}  \,.\]\,
The second fundamental form $B_\omega$ (of the Gauss-Manin connection with respect to the Chern connection for the holomorphic structure of the Hodge filtration) is defined as \[B_\omega(c_1,c_2) := \frac{i}{2}\int_S \frac{h(c_1)h(c_2)}{\omega^2} \omega \wedge \overline{\omega}.\] \,
Let $H_\omega$ denote the curvature operator of the second fundamental form. 

\begin{rem}\label{b-vanishing}
It is known that $B_\omega$ vanishes identically in the symplectic orthogonal of the tautological subbundle on only two orbit closures, namely the \emph{Eierlegende Wollmilchsau} and \emph{Ornithorynque}, and this follows from the works \cite{Aul06, EKZ14, Mol05,AN19}. By a result of S. Filip \cite{Fil17}, the rank of the second fundamental form $B$ is a lower bound for the number of strictly positive Lyapunov exponents of the Kontsevich--Zorich cocycle.
\end{rem}
In the following $\textbf{H}$ will denote a strongly irreducible, symplectic, $\SL(2,\R)$-invariant subbundle, which is symplectic orthogonal to the tautological bundle. For any isotropic $k$-dimensional exterior vector $\text{\bf{c}}_\omega$ in $\bfH^{(k)}_\omega$, it also follows by \cite{For02} (see also \cite[Corollary 2.2]{FMZ12}) that  \begin{align} \left|\frac{d}{dt} \sigma_k(g_t, \text{\bf{c}}_\omega)\right| < k\label{lipschitzprop}\end{align} 

For $h\in \{1,\dots, g-1\}$ and $\textbf{H}$ of dimension $2h$, let $\{c_1, c_2,\ldots,c_{h}\}$ be a Hodge-orthonormal basis of $\textbf{H}^{1,0} \subset H^1(S, \bC)$, and let $A^{(h)}_\omega$  (resp., $B^{(h)}_\omega$) be the corresponding representation matrix of the Hodge inner product $A_\omega$ (resp., of the second fundamental form $B_\omega$). Let $H^{(h)}_\omega = B^{(h)}_\omega \bar B^{(h)}_\omega$  be the matrix of the curvature operator, which is Hermitian non-negative, since $B^{(h)}_\omega$ is symmetric. The eigenvalues of $B^{(h)}_\omega$ are denoted by $\Lambda_i(\omega)$, where $ |\Lambda_1| > |\Lambda_2| \geq \cdots \geq |\Lambda_h| \geq 0$. Moreover, the norm squared of these eigenvalues, $|\Lambda_i(\omega)|^ 2$, are the eigenvalues of the curvature matrix $H^{(h)}_\omega$, which are continuous, bounded functions on $\cH_g$ (cf. \cite{FMZ12}, Lemma 2.3). 

For any $k$-dimensional exterior vector $\text{\bf{v}} \in \bP(\textbf{H}^{(k)})$, let 
$$
\{c_1, c_2,\ldots, c_k, c_{k+1}, \ldots, c_{h}\} \subset 
\textbf{H}
$$ 
be an ordered orthonormal basis such that 
$\{c_1, c_2,\ldots, c_k\}$ is a basis of $\text{\bf{v}}$.
We let $A^{(k)}_\omega(\text{\bf{v}})$  (resp., $B^{(k)}_\omega(\text{\bf{v}})$) be the corresponding representation matrix of the Hodge inner product $A_\omega$ (resp., of the second fundamental form $B_\omega$)
restricted to $\text{\bf{v}}$ with respect to the basis 
$\{c_1, c_2,\ldots, c_k\}$. We let $H^{(k)}_\omega (\text{\bf{v}})$
be the representation matrix of the restriction of the curvature operator $H_\omega$ to $\text{\bf{v}}$ with respect to the basis $\{c_1, c_2,\ldots, c_k\}$.

\subsection{Foliated Hyperbolic Laplacian}
The space $\cH_g$, is foliated by the orbits of the $\SL(2,\bR)$-action, whose leaves are isometric to the unit cotangent bundle of the Poincaré disk $\bD$ or of a finite volume hyperbolic surface. Unless $X$ is a closed $\SL(2,\R)$-orbit (the orbit of a Veech surface), almost all leaves are isometric to the unit cotangent bundle of the Poincaré disk. 

For almost every $\omega \in \cH_g$, the \Tm~ disk $L_\omega \defeq \SL(2,\bR)/\SO(2,\bR) \cdot \omega$ is isometric to $\bD$ (or a finite area hyperbolic surface $\bD/ V(\omega)$ defined by a lattice $V(\omega)<\SL(2,\R)$ called the Veech group), and so is endowed with the (foliated) hyperbolic gradient $\gradient_{L_\omega}$ and hyperbolic Laplacian $\laplacian_{L_\omega}$. Let $ r_\theta=\begin{pmatrix}  \cos(\theta/2) &-\sin(\theta/2)\\ \sin(\theta/2)& \cos(\theta/2) \end{pmatrix}$.

\begin{rem}\label{embedding}
Observe that for $\omega \in X$, the \Tm~disk $L_\omega$ is identified with $\bD$ (or $\bD/V(\omega)$) via the map $(t,\theta) \mapsto \SO(2,\bR) \cdot g_{t} r_{\theta} \omega$.
\end{rem}

Now suppose that $f: X\rightarrow\R$ is an $\SO(2,\bR)$-invariant $C^\infty$-function in the direction of the leaf. For $\omega \in X$ and for $L_\omega$ the \Tm~ disk passing through $\omega$, we define $\laplacian f(\omega) \defeq \laplacian_{L_\omega} f|_{L_\omega}(\omega)$, where $f|_{L_\omega}$ is the restriction of $f$ to $L_\omega$. We also define the leaf-wise gradient similarly. 

Observe that the Hodge inner product $A_{\omega}(\cdot,\cdot)$ is invariant under the action of $\SO(2,\bR)$, and so defines a real-analytic function on the \Tm~ disk. In the sequel, we will only work in a given \Tm~ disk, so the norm will read $(\cdot,\cdot)_z$ for a complex parameter $z \in \bD$. For any $k$-dimensional exterior vector  $\text{\bf{v}}=(\omega,\text{\bf{v}}_{\omega})$ in the symplectic orthogonal of the tautological subbundle (with the origin $z=0$ corresponding to $\omega$ as in \eqref{embedding}), define \[\sigma_k(z,\text{\bf{v}}) := \log |\det A^{(k)}_{z} (\text{\bf{v}})|^{1/2},\]
where $A^{(k)}_{z}(\text{\bf{v}}) = A_z(\text{\bf{v}}_i,\text{\bf{v}}_j)$  and $\{\text{\bf{v}}_i\}$ is an ordered basis of $\text{\bf{v}}$.
\begin{rem}In fact, this is an abuse of notation since we originally lifted elements of $\SL(2,\bR)$ to the Hodge bundle. This is not an issue since the Hodge norm is $\SO(2,\bR)$-invariant.
\end{rem}

We recall the following fundamental fact \begin{thm}\cite{For02, FMZ12}\label{det} For every $1\leq k\leq h$
there exist smooth functions $\Phi_k:\bP(\textbf{H}^{(k)}) \to [0,k]$ and $\Psi_k :\bP(\textbf{H}^{(k)}) \to D(0,k) \subset \bC$  such that the following holds.
For any $k$-dimensional exterior vector $\text{\bf{v}} \in \bP(\textbf{H}^{(k)})$, we have the following identities: 
\begin{align}
\laplacian_{L_\omega} \sigma_k(z,\text{\bf{v}}) = 2\Phi_k (z, \text{\bf{v}})\quad \text{ and }  \quad
\gradient_{L_\omega} \sigma_k(z,\text{\bf{v}}) = \Psi_k (z, \text{\bf{v}})\,.\label{laplacianformula}
\end{align}
In the particular case that $k=h$, there exist functions
$\Lambda_i: X \to D(0,1)$ for all $i\in \{1, \dots, h\}$  such that
$$
\laplacian_{L_\omega} \sigma_h(z,\text{\bf{v}}) = 2 \sum_{i=1}^{h} |\Lambda_i(z)|^2 \quad \text{ and } \quad 
\gradient_{L_\omega} \sigma_h(z,\text{\bf{v}}) =\sum_{i=1}^{h} \Lambda_i(z)\,.
$$
In this case the Laplacian and the gradient are independent of the choice of a maximal isotropic  (Lagrangian) subspace $\text{\bf {v}}\in \bP(\textbf{H}^{(h)})$.  

Moreover, for all $k\in \{1, \dots, h\}$, for any $\SL(2,\R)$-invariant measure $\nu$ on $\cH_\kappa$, under the condition that $\lambda_k > \lambda_{k+1}$, which implies that the unstable Oseledets isotropic $k$-dimensional distribution $E^+_k$ is well-defined $\nu$-almost everywhere, and for $\hat \nu$ almost all $\text{\bf {v}}\in \bP(\textbf{H}^{(h)})$,  we have that
\begin{align*}
\lim_{T\to\infty} \frac{1}{T}\int_0^T \laplacian_{L_\omega} \sigma_k\big(g_t,\text{\bf{v}}\big) \, dt = \int_X 2\Phi_k 
(\omega, E^+_k(\omega)) d\nu = 2\sum_{i=1}^k \lambda_i\,.
\end{align*}
\end{thm}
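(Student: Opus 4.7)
The plan is to derive both identities by direct computation from the definition $\sigma_k(z, \text{\bf{v}}) = \tfrac{1}{2}\log|\det A^{(k)}_z(\text{\bf{v}})|$. Working at a fixed base point $\omega \in X$ (corresponding to $z = 0$ via Remark~\ref{embedding}) and in a Hodge-orthonormal basis $\{c_1, \ldots, c_k\}$ of $\text{\bf{v}}$ so that $A^{(k)}_0(\text{\bf{v}}) = I_k$, the Kodaira--Spencer formalism identifies the holomorphic derivative of the Hodge inner product along the \Tm~disk with the second fundamental form: up to the absolute constant fixed by the hyperbolic normalization, $\partial_z A^{(k)}_z(\text{\bf{v}})|_{z=0} = 2 B^{(k)}_0(\text{\bf{v}})$. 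Jacobi's formula $d\log\det M = \text{tr}(M^{-1} dM)$ then yields $\gradient_{L_\omega}\sigma_k(z,\text{\bf{v}})|_{z=0} = \text{tr}\, B^{(k)}_0(\text{\bf{v}})$, so I would define $\Psi_k(\omega,\text{\bf{v}}) \defeq \text{tr}\, B^{(k)}(\text{\bf{v}})$. Smoothness is automatic, and basis-independence within $\text{\bf{v}}$ follows from the conjugation invariance of the trace. The bound $|\Psi_k|<k$ is an immediate consequence of Cauchy--Schwarz together with the spectral-gap estimate $|\Lambda_1(\omega)|\leq \lambda_1<1$ from \cite{For02, FMZ12}.

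For the Laplacian, I would apply $\partial_{\bar z}$ to the formula for $\partial_z \sigma_k$. Using $\partial_{\bar z} A_z = 2\bar B_z$ together with the holomorphy of $B$ along the leaf (so $\partial_{\bar z} B = 0$ in the adapted frame), a second application of Jacobi's formula gives at $z = 0$ that $\tfrac{1}{2}\laplacian_{L_\omega}\sigma_k(z,\text{\bf{v}})|_{z=0} = \text{tr}\, H^{(k)}_0(\text{\bf{v}})$; I would take $\Phi_k(\omega,\text{\bf{v}}) \defeq \text{tr}\, H^{(k)}(\text{\bf{v}})$. Non-negativity $\Phi_k \geq 0$ is the subharmonicity of $\sigma_k$ proven in \cite{For02}. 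The upper bound $\Phi_k \leq k$ follows from the fact that $H^{(k)}_\omega(\text{\bf{v}})$ is Hermitian non-negative with eigenvalues majorized by those of the ambient curvature operator $H^{(h)}_\omega$, each bounded by $|\Lambda_1|^2 \leq 1$; summing $k$ such eigenvalues gives the bound.

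The Lagrangian case $k = h$ admits a simplification: when $\text{\bf{v}}$ is a maximal isotropic subspace, $B^{(h)}_z$ is complex symmetric, and its Autonne--Takagi decomposition exhibits its singular values as $|\Lambda_1(z)|, \ldots, |\Lambda_h(z)|$, which are functions of $z$ alone. Since $\text{tr}\, B^{(h)}_z$ and $\text{tr}\, H^{(h)}_z = \text{tr}(B^{(h)}_z \overline{B^{(h)}_z})$ are invariant under orthogonal change of basis within $\text{\bf{v}}$ and (by the Takagi normal form) take the asserted values $\sum_i \Lambda_i(z)$ and $\sum_i |\Lambda_i(z)|^2$, the formulas are manifestly independent of the chosen Lagrangian.

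Finally, the time-average identity I would derive from Oseledets' multiplicative ergodic theorem applied to the KZ cocycle on $\textbf{H}^{(k)}$, whose top Lyapunov exponent is $\sum_{i=1}^k \lambda_i$. The hypothesis $\lambda_k > \lambda_{k+1}$ ensures that the unstable isotropic Oseledets distribution $E^+_k$ is well-defined $\nu$-a.e., and for $\hat\nu$-generic $\text{\bf{v}}$ the projective orbit $g_t\text{\bf{v}}$ contracts exponentially fast to $E^+_k(g_t\omega)$. Birkhoff's theorem applied to the $g_t$-invariant lift of $\nu$ supported on the graph of $E^+_k$ identifies the time average of $\laplacian_{L_\omega}\sigma_k(g_t,\text{\bf{v}})$ with $\int_X 2\Phi_k(\omega, E^+_k(\omega))\, d\nu$; Oseledets' theorem simultaneously gives $\sigma_k(g_T, \text{\bf{v}})/T \to \sum_{i=1}^k \lambda_i$, which upon integrating the derivative identifies the same limit with $2\sum_{i=1}^k \lambda_i$. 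The step I expect to require the most care is the replacement of $\Phi_k(g_t\omega, g_t\text{\bf{v}})$ by $\Phi_k(g_t\omega, E^+_k(g_t\omega))$ inside the time average, which should follow from the uniform continuity of $\Phi_k$ on the compact bundle $\bP(\textbf{H}^{(k)})$ combined with the exponential rate of projective convergence to the Oseledets flag.
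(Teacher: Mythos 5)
The paper does not prove Theorem~\ref{det} itself; it is stated with a citation to \cite{For02, FMZ12}, and the only additional content the paper supplies is the explicit formulas \eqref{eq:PhiPsi} for $\Phi_k$ and $\Psi_k$ in the following Remark. Your formula for the gradient, $\Psi_k = \text{tr}(B^{(k)})$, agrees with \eqref{eq:PhiPsi}, and the $k=h$ reduction via the Autonne--Takagi form and the Birkhoff/Oseledets argument for the time-average identity are both sound in outline. The gap is in the Laplacian.

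You claim $\Phi_k = \text{tr}(H^{(k)}_\omega(\text{\bf{v}}))$, but the paper's Remark \eqref{eq:PhiPsi} gives $\Phi_k = 2\,\text{tr}(H^{(k)}_\omega(\text{\bf{v}})) - \text{tr}\bigl(B^{(k)}_\omega(\text{\bf{v}})\,\bar B^{(k)}_\omega(\text{\bf{v}})\bigr)$, and these are \emph{not} the same for $k<h$. Writing $B^{(h)}_\omega$ in $2\times 2$ block form adapted to $\text{\bf{v}} \oplus \text{\bf{v}}^\perp$ with upper-left block $B_{11}=B^{(k)}$ and off-diagonal block $B_{12}$, one has $\text{tr}(H^{(k)}) = \text{tr}\bigl((B\bar B)_{11}\bigr) = \text{tr}(B_{11}\bar B_{11}) + \Vert B_{12}\Vert^2_{HS}$, and hence the paper's $\Phi_k$ equals $\text{tr}(B_{11}\bar B_{11}) + 2\Vert B_{12}\Vert^2_{HS}$, while your $\Phi_k$ equals $\text{tr}(B_{11}\bar B_{11}) + \Vert B_{12}\Vert^2_{HS}$. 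They agree precisely when $B_{12}=0$, which is automatic only when $k=h$. The $k<h$ case, which is what the present paper actually needs, is where your formula is off by the term $\Vert B_{12}\Vert^2_{HS}$, and so the identity $\laplacian_{L_\omega}\sigma_k = 2\Phi_k$ would fail with your definition of $\Phi_k$.

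The derivation itself also does not produce the formula you claim. You assert $\partial_{\bar z}A^{(k)} = 2\bar B^{(k)}$ and $\partial_{\bar z}B = 0$; applying Jacobi's formula once more at $z=0$ with $A^{(k)}_0 = I$ then gives
$\partial_{\bar z}\partial_z \sigma_k\big|_0 = -\tfrac{1}{2}\text{tr}\bigl((\partial_{\bar z}A^{(k)})(\partial_z A^{(k)})\bigr) + \tfrac{1}{2}\text{tr}\bigl(\partial_{\bar z}\partial_z A^{(k)}\bigr)$,
and under your holomorphy hypothesis the second term vanishes, leaving $-2\,\text{tr}(B^{(k)}\bar B^{(k)}) \leq 0$. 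This contradicts the subharmonicity of $\sigma_k$ proved in \cite{For02}, so the premises and the claimed conclusion $\tfrac12\laplacian\sigma_k = \text{tr}(H^{(k)}) \geq 0$ are mutually inconsistent. The correct mechanism, as in \cite{FMZ12}, is that the mixed second derivative $\partial_{\bar z}\partial_z A^{(k)}$ does not vanish; it is governed by the curvature of the Chern connection and produces the $2\,\text{tr}(H^{(k)})$ term. Your bound $\Phi_k \leq k$ is similarly tied to the wrong formula and needs to be re-derived once the correct $\Phi_k$ is in place.
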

\begin{rem}
The functions $\Phi_k$ and $\Psi_k$ can be written as follows.
Let $B^{(k)}_z({\bf v})$ and $H^{(k)}_z( {\bf v})$ denote the
restrictions of the second fundamental form and of the curvature to the $k$-dimensional exterior vector ${\bf v} \in \bP(\textbf{H}^{(k)})$.  By definition $B^{(k)}$ and $H^{(k)}$ are  functions on $\bP(\textbf{H}^{(k)})$  with values in the subspace of complex symmetric $k\times k$ matrices and non-negative Hermitian $k\times k$ matrices. The following formulas hold, for all $(\omega,{\bf v})\in \bP(\textbf{H}^{(k)})$:
\begin{equation}
\label{eq:PhiPsi}
\begin{aligned}
\Phi_k (\omega, \text{\bf{v}})&= 2\text{tr} (H^{(k)}_\omega(\text{\bf{v}}))-
\text{tr} \left(B^{(k)}_\omega(\text{\bf{v}}) \bar B^{(k)}_\omega (\text{\bf{v}})\right) \,;\\
\Psi_k (\omega, \text{\bf{v}})&=  \text{tr}(B^{(k)}_\omega(\text{\bf{v}}))  \,.
\end{aligned}
\end{equation}
\end{rem}

An important property of the function $\Psi_k$, which is relevant in the proof of positivity of the variance (for $k=1$)
in Section \ref{posvar:det}, is that for all $\omega \in X$ the set of its critical point is a subset of the level set 
$$\{\textbf{v} \in \bP_\omega(\textbf{H}^{(k)}) \vert \Psi_k(\omega, \textbf{v})=0\}\,.$$
In fact, the following linear algebra result holds:
\begin{lem} 
\label{lemma:crit}
For every $\omega \in X$, the set of critical points of the function $\Psi_k(\omega, \cdot)$ equals the  set 
$$
\{ \text{\bf{v}} \in \bP(\textbf{H}_\omega^{(k)}) \vert  B^{(k)}_\omega(\text{\bf{v}})=0\} \subset \{\text{\bf{v}} \in \bP(\textbf{H}_\omega^{(k)}) \vert \Psi_k(\omega, \text{\bf{v}})=0\}\,.
$$
In addition,  for all $\omega \in X$ and
$\text{\bf{v}} \in \bP_\omega (\textbf{H}^{(k)})$, we have 
$$
\Vert D_{ \text{\bf{v}} }\Psi_k(\omega, \text{\bf{v}}) \Vert\geq   2  \vert \Psi_k(\omega, \text{\bf{v}})\vert/k\,.
$$

\end{lem}
\begin{proof}
By its definition, the function $\Psi_k$ is computed as follows. 

Let $\{v_1, \dots, v_k\}$ denote any Hodge orthonormal
basis of the $k$-dimensional isotropic subspace $\text{\bf{v}} \in \bP_\omega(\textbf{H}^{(k)}) $, then
$$
\Psi_k(\omega, \text{\bf{v}}) = \sum_{i=1}^k B_\omega(v_i, v_i)\,.
$$
It can be seen that the above expression depends only on the isotropic subspace $\text{\bf{v}}$ and not on its orthonormal basis.  By fixing a orthonormal basis $\{w_1, \dots, w_h\}$
of a maximal isotropic (Lagrangian) subspace, the space of all orthonormal bases is in bijective correspondence with the group
of complex unitary matrices.  

Let then $U =(U_{ij})$ denote a complex unitary matrix (such that $UU^*=I$) and for all $i\in \{1, \dots, h\}$  let 
$$
w_i = \sum_{a=1}^h U_{ia} v_a \,.
$$
We then have
$$
\Psi_k(\omega, \text{\bf{w}}) = 
\sum_{i=1}^k B_\omega(w_i, w_i) = \sum_{i=1}^k
\sum_{a,b=1}^h U_{ia} U_{ib} B(v_a, v_b)\,.
$$
The tangent space at the identity of the unitary group is the vector space of anti-Hermitian matrices, that is, the matrices
$T$ such that $T^* = -T$. By differentiating the above formula
with respect to $U$ along $T$ we have
\begin{equation}
\label{eq:Psi_der}
(D_U\Psi_k)(\omega, \text{\bf{v}}, T)= 2 
\sum_{a,b=1}^k T_{ab}  B(v_a, v_b)\,.
\end{equation}
It follows that, if $\text{\bf{v}}$ is a critical point,
for $T$ such that $T_{\alpha\beta}=0$ for all $(\alpha,\beta) \not\in \{(a,b), (b,a)\}$  we have
$$
(T_{ab} + T_{ba}) B(v_a, v_b) =0 
$$
which if $T_{ab} + T_{ba}= T_{ab} - \overline{T_{ab}}\not=0$,
implies $B(v_a, v_b)=0$.

\smallskip
\noindent Finally, the stated lower bound follows
from formula \eqref{eq:Psi_der} since 
$$
\Vert D_{ \text{\bf{v}} }\Psi_k(\omega, \text{\bf{v}}) \Vert\geq 2 (\max_{i\in \{1, \dots, k\}} \vert B_\omega(v_i,v_i)\vert) \geq 2 \vert \Psi_k (\omega, \text{\bf{v}}) \vert /k\,.
$$
The proof is therefore complete.
\end{proof}

\subsection{Harmonic measures}
A probability measure $\nu$ on $X_*:=\SO(2,\bR)\backslash X$ is called harmonic if, for all  functions $f :  X_* \to \bR$ in the domain of the foliated
Laplacian $\Delta$ on $X_*$, we have
\[\int_{X_*} \Delta f\, d\nu= \int_{X_*} \Delta_{L_\omega} f|_{L_\omega}(\omega)\, d\nu(\omega)=0.\]
 Such a measure is also ergodic if $X_*$ cannot be partitioned into two union of leaves, each of which having positive $\nu$ measure. We refer the reader to the interesting paper of Lucy Garnett \cite{Gar83} for details and for an ergodic theorem for such measures. It is also a fact, due to Bakhtin-Martinez \cite{BM08}, that harmonic measures on $\SO(2,\bR)\backslash X$ are in one-to-one correspondence with $P$-invariant measures on $X$. This is closely related to a classical fact due to Furstenberg \cite{Fur2,Fur1} that $P$-invariant measures are in one-to-one correspondence with (admissible) stationary measures, and that harmonic measures are stationary. In the case of $\SL(2,\bR)$, these three notions are therefore closely related. 

 It follows from \cite{BEW20} that there is a unique harmonic
 measure $\hat \nu_*$ on $\SO(2, \R) \backslash \bP(\bf H^{(k)})$, which projects onto the quotient measure of the $\SL(2,\R)$-invariant measure $\nu$ on $X_*= SO(2, \R)\backslash X$. The harmonic measure $\hat \nu_*$ is the projection of the $P$-invariant measure $\hat \nu$ on 
 $\bP(\bf H^{(k)})$ to $\SO(2, \R) \backslash \bP(\bf H^{(k)})$.
 Let then $W$ the space of trajectories of the foliated Brownian motion on $X_*$, endowed with the Wiener measure $\bP$, and let $\hat \nu_{\bP}\defeq \hat \nu_* \otimes \bP$ denote the probability measure on the vector bundle  $\bP^W (\bfH^{(k)})$ over the fiber bundle $X_* ^W$, whose projection on $\bP(\bfH^{(k)})$ is the harmonic measure $\hat\nu_*$ and whose conditional measure over a point $\text{\bf{v}}$ is $\bP_\omega$.

 \subsection{Hyperbolic Brownian Motion}
Following the normalization used in \cite{For02} (which is a standard normalization, see also \cite{Hel00}), for $z = re^{i\theta}$ with $\theta \in [0,2\pi]$, write \begin{align} t \defeq \frac{1}{2} \log \frac{1+r}{1-r}.\end{align}  

Since the Hodge norm is $\SO(2,\bR)$-invariant, it suffices to study the diffusion process generated by $\frac{1}{2}\laplacian_{L_\omega}$, where the leaf-wise hyperbolic Laplacian in geodesic polar coordinates is given by \begin{align}\laplacian_{L_\omega} = \frac{\partial^2}{\partial t^2} + 2\coth(2t)\frac{\partial}{\partial t}+\frac{4}{\sinh^2(2t)}\frac{\partial^2}{\partial \theta^2}.\end{align} Moreover, let $(W_\omega^{(i)},\bP_\omega^{(i)})$, $i =1, 2$, be two copies of the space of Brownian trajectories $C(\bR^{+},\bR)$ starting at the origin (with the origin corresponding to a random point $\omega$), together with the standard Wiener measure, and such that $W_\omega^{(1)}$ and $W_\omega^{(2)}$ are independent. Set $W_\omega = W_\omega^{(1)}\times W_\omega^{(2)}$ and $\bP_\omega = \bP_\omega^{(1)} \times \bP_\omega^{(2)}$. The hyperbolic Brownian motion is the diffusion process $\rho_s = (t(s),\theta (s))$ generated by the (leaf-wise) hyperbolic Laplacian. It follows by Ito's formula \cite[Theorem IV.3.3]{RY} that the generator determines the trajectories of the diffusion process $\rho_s$ which are solutions of the following stochastic differential equations
\begin{align}
dt(s) &= dW_s^{(1)} + \coth(2t(s)) ds\label{tsdef} \\
d\theta(s) &= \frac{2}{\sinh(2t(s))} dW_s^{(2)} 
\end{align}
with $t(0) = 0$ and $\theta(0)$  being uniformly distributed on $S^1$.

In addition, for an $\SO(2,\bR)$-invariant function $f : \bP(\bfH) \to \bR$, where $f$ is of class $C^2$ along $\SL(2,\bR)$ orbits, Ito's formula gives 
\begin{align*}
&f(\rho_T,\bfv) - f(\rho_0,\bfv) = \int_0^T \left(\frac{\partial}{\partial t} f(\rho_s,\bfv),\frac{2}{\sinh(2t(s))} \frac{\partial}{\partial \theta} f(\rho_s,\bfv)\right) \cdot \left(dW^{(1)}_s,dW^{(2)}_s\right) \\ &+ \int_0^T \left(\frac{1}{2}\frac{\partial^2 }{\partial t^2} f(\rho_s,\bfv) + \frac{1}{2}2\coth(2t(s))\frac{\partial}{\partial t} f(\rho_s,\bfv)+\frac{1}{2}\frac{4}{\sinh^2(2t(s))}\frac{\partial^2 }{\partial \theta^2} f(\rho_s,\bfv) \right)ds \\
&= \int_0^T \gradient_{L_\omega} f(\rho_s,\bfv) \cdot (dW^{(1)}_s,dW^{(2)}_s) + \frac{1}{2}\int_0^T \laplacian_{L_\omega} f(\rho_s,\bfv) ds\,.\label{regularito}
\end{align*}

Finally, we note that the foliated heat semigroup $D_t$ is given as follows 

\begin{align*}
D_s f(x,\bfv)&\defeq  \frac{1}{2\pi}\int_0^{2\pi} \int_0^\infty f(z,\bfv) p_{\omega}(t,s) \sinh(t) d t\, d\theta\,,
\end{align*}
where $p_{\omega}(t,s)$ is the (foliated) hyperbolic heat kernel at time $s$; in other words, for $x,y \in L_\omega$, this is the (rotationally invariant) transition probability kernel $p_{\omega} (x,y;s)$, with $d_{\bD}(x,y) = t$.

\section{Proofs of Main Theorems}
\subsection{Proof of the Distributional Convergence in Theorem \ref{rclt}}\label{randomproof} Recall that $\rho_s$ is the diffusion process generated by the foliated hyperbolic Laplacian. We are interested in studying the term \begin{align}\frac{1}{\sqrt{T}} (\sigma_k(\rho_T,\text{\bf{v}}) - T\sum_{i=1}^k\lambda_i).\end{align} Set $\lambda_{(k)} = \sum_{i=1}^k \lambda_i$. By applying Ito's formula, we obtain, 
 
\begin{align}
\frac{1}{\sqrt{T}} (\sigma_k(\rho_T,\text{\bf{v}}) - T\lambda_{(k)}) &= \frac{\sigma_k(\rho_0,\text{\bf{v}})}{\sqrt{T}}  +\frac{1}{\sqrt{T}}\int_0^T \gradient_{L_\omega}\sigma_k(\rho_s,\text{\bf{v}}) \cdot (dW^{(1)}_s,dW^{(2)}_s) \\   &+ \frac{1}{2\sqrt{T}}\int_0^T (\laplacian_{L_\omega} \sigma_k(\rho_s,\text{\bf{v}})-2\lambda_{(k)})ds
\end{align}

Let $W^{2,2}(\bP(\bfH^{(k)}), \nu)$ denote the (foliated) Sobolev space of functions which belong to $L^2(\bP(\bfH^{(k)}), \nu)$ together with all their derivatives up to second order, in all directions tangent to $\SL(2, \R)$ orbits: for all  $\cV, \cW \in \mathfrak{sl}(2, \R)$, 
$$
f \in W^{2,2}(\bP(\bfH^{(k)}), \nu) \Longleftrightarrow f, \,\cV f, \, \cV \cW f \in L^2(\bP(\bfH^{(k)}), \nu)\,.
$$

 It follows  by Lemma \ref{lemma:Poisson}  that the equation 
 \begin{equation}
  \label{poissoneq}
    \laplacian_{L_\omega} u(\omega,\bfv)  =  \laplacian_{L_\omega} \sigma_k(\omega,\bfv) - 2\lambda_{(k)}   \,, \quad \text{ for } (\omega,\bfv_\omega)\in \bP(\bfH^{(k)})\,,
 \end{equation}
has an $\SO(2,\bR)$-invariant solution $u^{(k)} \in W^{2,2}(\bP(\bfH^{(k)}), \nu)$, the space of functions with all $\mathfrak{sl}(2, \R)$-derivatives up to second order in $L^2(\bP(\bfH^{(k)}),\nu)$. 

In fact, since the functions $\laplacian_{L_\omega} \sigma = \Phi_k$ are smooth and bounded on $\bP(\bfH^{(k)})$ (see Theorem \ref{det}) and the foliated Laplacian $\laplacian_{L_\omega}$ is elliptic along the leaves, it follows that the functions $u^{(k)}$ are also smooth along the leaves.
 
 \smallskip
 \noindent For all $s>0$, let $\rho_s = (t(s), \theta(s))$ in geodesic polar coordinates. By Ito's formula we get,
\begin{align*}
\frac{1}{\sqrt{T}}(u(\rho_T,\bfv)-u(\rho_0,\bfv)) &=  \frac{1}{\sqrt{T}}\int_0^T \gradient_{L_\omega} u(\rho_s,\bfv) \cdot (dW^{(1)}_s,dW^{(2)}_s)  \\ & \qquad+ \frac{1}{2\sqrt{T}} \int_0^T  \laplacian_{L_\omega} u(\rho_s,  \bfv) ds 
\end{align*}

So, by substituting $\laplacian_{L_\omega} \sigma_k(\rho_s,\text{\bf{v}})-2\lambda_{(k)}$ for $\laplacian_{L_\omega} u(\rho_s, \bfv)$, we have that \begin{align*}
\frac{1}{2\sqrt{T}}&\left(\int_0^T (\laplacian_{L_\omega} \sigma_k(\rho_s,\text{\bf{v}})-2\lambda_{(k)})ds\right) \\
&=\frac{1}{\sqrt{T}}(u(\rho_T, \bfv)-u(\rho_0, \bfv))  \\& 
\qquad\qquad - \frac{1}{\sqrt{T}}\int_0^T \gradient_{L_\omega} u(\rho_s, \bfv) \cdot (dW^{(1)}_s,dW^{(2)}_s)\,. \end{align*}
Define 
\begin{align}
M_T = \int_0^T \gradient_{L_\omega} (\sigma_k(\rho_s,\text{\bf{v}})-u(\rho_s,\bfv) ) \cdot (dW^{(1)}_s,dW^{(2)}_s)\label{itointegralformula}
\end{align}
We then have 
\begin{align}
\frac{1}{\sqrt{T}} (\sigma_k(\rho_T,\text{\bf{v}}) - T\sum_{i=1}^k\lambda_i) &= \frac{1}{\sqrt{T}}(u(\rho_T,\bfv)-u(\rho_0,\bfv) +\sigma_k(\rho_0,\text{\bf{v}})) \\ &  + \frac{1}{\sqrt{T}}M_T\,. \label{M_T_identity}
\end{align}
Next, we study the quadratic variation $\langle M_T, M_T\rangle_{\hat{\nu}_\bP}$. \begin{rem} It is a fact that the covariance of two Ito integrals with respect to independent Brownian motions is zero, and this is a consequence of Ito isometry \cite[Theorem IV.2.2]{RY}. Indeed, for any \(F, G \in L^2(\bP(\text{\bf{H}}^{(k)}), \hat\nu)\), consider the following expectation: \[
\bE_{{\hat{\nu}_\bP}} \left[ \left( \int_0^t F(\rho_s) dW^{(1)}_s \right) \left( \int_0^t G(\rho_s) dW^{(2)}_s \right) \right]
\]
Applying Itô’s isometry, the expectation simplifies to:
\[
\bE_{{\hat{\nu}_\bP}}\left[\int_0^t F(\rho_s) G(\rho_s) d\langle W^{(1)}, W^{(2)} \rangle_s\right]
\]
Now, since \( W^{(1)} \) and \( W^{(2)} \) are independent Brownian motions, their quadratic covariation satisfies:
\[
d\langle W^{(1)}, W^{(2)} \rangle_s = 0
\]
for all \( s \). This directly implies:
\[
\bE_{{\hat{\nu}_\bP}}\left[\int_0^t F(s) G(s) d\langle W^{(1)}_s, W^{(2)}_s \rangle_s\right] = 0
\]
which proves that the stochastic integrals
\[
I_1 = \int_0^t F(\rho_s) dW^{(1)}_s, \quad I_2 = \int_0^t G(\rho_s) dW^{(2)}_s
\]
are uncorrelated; i.e. \(\bE_{{\hat{\nu}_\bP}}[I_1 I_2] = 0,\) as desired.\end{rem}

We are now ready to analyze $\langle M_T, M_T\rangle_{\hat{\nu}_\bP}$. Applying the previous remark, we derive:
\begin{align*}
\langle M_T, M_T\rangle_{\hat{\nu}_\bP} &= \E{\left(\int_0^T (\gradient_{L_\omega}\sigma_k(\rho_s,\text{\bf{v}})-\gradient_{L_\omega}u(\rho_s,\bfv) ) \cdot (dW^{(1)}_s,dW^{(2)}_s)\right)^2} \\ &= \E{\left(\int_0^T  (\frac{\partial \sigma_k}{\partial t}(\rho_s,\text{\bf{v}})-\frac{\partial u}{\partial t}(\rho_s,\bfv) ) dW^{(1)}_s\right)^2}\\  &+ \E{\left(\int_0^T \frac{2}{\sinh(2t(s))}(\frac{\partial \sigma_k}{\partial \theta}(\rho_s,\text{\bf{v}})-\frac{\partial u}{\partial \theta}(\rho_s,\bfv) ) dW^{(2)}_s\right)^2}  
\end{align*}

Applying Ito's isometry on the expectation of the square of the Ito integrals on the RHS yields
\begin{align*}
\langle M_T, M_T\rangle_{\hat{\nu}_\bP}  &= \E{\int_0^T \left( ( \frac{\partial \sigma_k}{\partial t}(\rho_s,\text{\bf{v}})- \frac{\partial u}{\partial t}(\rho_s,\bfv) )\right)^2 ds}\\  &+ \E{\int_0^T \left(\frac{2}{\sinh(2t(s))} (\frac{\partial \sigma_k}{\partial \theta}(\rho_s,\text{\bf{v}})- \frac{\partial u}{\partial \theta}(\rho_s,\bfv) )\right)^2 ds} \\
&= \E{\int_0^T | \gradient_{L_\omega}\sigma_k(\rho_s,\text{\bf{v}})-\gradient_{L_\omega}u(\rho_s,\text{\bf{v}})|^2ds}
\end{align*}

Observe that $|\gradient u| \in L^2(\bP(\bfH^{(k)}),\hat\nu)$ by Lemma \ref{lemma:Poisson}.
Therefore, by  Oseledets’ theorem, Fubini's theorem, and the dominated convergence theorem, we have the convergence with respect to the measure $\hat{\nu}$ on $\bP(\bfH^{(k)})$:

\begin{align}
V^{(k)}_{\rho_\infty} &\defeq \lim_{T\to\infty} \frac{1}{T}\E{\int_0^T | \gradient_{L_\omega}\sigma_k(\rho_s,{\bf{v}})- \gradient_{L_\omega}u(\rho_s,{\bf{v}})|^2ds} \\&= \int_{\bP(\bfH^{(k)})} |\Psi_k (\omega,{\bf{v}})-\gradient_{L_\omega}u(\omega,\bfv)|^2d \hat{\nu} \\
&=\int_{X} |\Psi_k (\omega,E^+_k(\omega)) -
\gradient_{L_\omega}u(\omega,E^+_k(\omega))|^2d \nu \,.\label{variance}
\end{align}
See also \cite[Corollary 5.5]{For02}. The above formula,  together with Theorem~\ref{flj:mclt}, implies that the random variables $M_T/ \sqrt{T}$, hence the random variables 
$(\sigma(\rho_T,{\bf v}) -\lambda_{(k)} T)/\sqrt{T}$,  converge in distribution to a centered Gaussian distribution of variance $V^{(k)}_{\rho_\infty}$. In fact, the convergence in distribution of $(\sigma(\rho_T,{\bf v}) -\lambda_{(k)} T)/\sqrt{T}$ can be deduced from that of $M_T/ \sqrt{T}$ 
as follows. Since the function $u \in L^2(\bP(\bfH),\nu)$, it follows that  
$$
\begin{aligned}
\bE_{{\hat{\nu}_\bP}} &\Big( \frac{\vert \sigma_k(\rho_T,{\bf v}) -\lambda_{(k)} T - M_T\vert^2}{T}\Big) \\ & \qquad= \frac{1}{T}  \bE_{{\hat{\nu}_\bP}}(\vert u(\rho_T, \bfv)- u(\rho_0, \bfv ) + \sigma_k(\rho_0, \bfv)\vert^2)
\end{aligned}
$$
converges to $0$. Thus the random variable $(\sigma(\rho_T,{\bf v}) -\lambda_{(k)} T -M_T)/\sqrt{T}$ converges to $0$ in square mean,
hence in distribution. \hfill\qedsymbol

 \subsection{Proof of the Distributional Convergence in Theorem \ref{dclt}}
 
Observe that $t(s) = d_{\bD}(0,\rho_s)$, and that it is rotationally invariant. We will need the following useful lemma: 

\begin{lem}\cite[Lemma VII.7.2.1]{FL12}\label{LFL}
For all $\omega \in X$, there exists an $\bP_\omega$-almost everywhere converging process $\eta_s$ such that $t(s) = W_s^{(1)} + s+\eta_s$.
\end{lem}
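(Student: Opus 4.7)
The plan is to identify $\eta_s$ explicitly from the SDE and then reduce the claim to an almost-sure integrability statement. Starting from
\[
dt(s) = dW_s^{(1)} + \coth(2t(s))\,ds, \qquad t(0)=0,
\]
integration yields $t(s) = W_s^{(1)} + \int_0^s \coth(2t(u))\,du$. This suggests defining
\[
\eta_s \defeq \int_0^s \bigl(\coth(2t(u))-1\bigr)\,du,
\]
so that $t(s) = W_s^{(1)} + s + \eta_s$ holds by construction. The problem therefore collapses to showing that $\eta_s$ converges $\bP_\omega$-a.s. as $s\to\infty$. Since $\coth(x)\geq 1$ for $x>0$, the process $s\mapsto\eta_s$ is monotone non-decreasing, so convergence is equivalent to a uniform upper bound.

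Next, I would establish that $t(u)$ grows linearly in $u$, which in turn forces the integrand of $\eta_s$ to decay exponentially. Because $\coth\geq 1$, the identity above gives the pathwise lower bound $t(s) \geq W_s^{(1)} + s$. Combined with the law of the iterated logarithm for $W^{(1)}$, which ensures $|W_s^{(1)}| = o(s)$ a.s., one gets $t(s)/s \to 1$ a.s., and in particular $t(u) \geq u/2$ for all $u \geq s_0(\omega)$, where $s_0$ is a random finite time. Using the elementary identity $\coth(2t)-1 = \frac{2}{e^{4t}-1}$, it follows that for $u \geq s_0$,
\[
0 \leq \coth(2t(u)) - 1 \leq \frac{2}{e^{2u}-1},
\]
which is integrable on $[s_0,\infty)$. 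Hence $\eta_\infty - \eta_{s_0}$ is finite a.s.

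Finally, the contribution of $\eta_{s_0}$ itself needs to be controlled. This is where a little care is required: the drift $\coth(2t(u))$ blows up near $u=0$ because $t(0)=0$, so one must check that the singular behavior at the origin does not produce a divergent integral. The cleanest argument is to read this off the SDE itself: since $t(s_0)$ and $W_{s_0}^{(1)}$ are both $\bP_\omega$-a.s. finite, so is $\int_0^{s_0}\coth(2t(u))\,du = t(s_0) - W_{s_0}^{(1)}$, hence $\eta_{s_0}$ is finite a.s. Putting the two bounds together shows that the monotone non-decreasing process $\eta_s$ is bounded above and therefore converges $\bP_\omega$-a.s. to a finite limit.

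The main obstacle I anticipate is the integrability at the origin, as well as giving a rigorous justification that the hyperbolic radial diffusion $t(s)$ is well-defined and stays strictly positive for $s>0$ despite the singular drift $\coth(2t)$; this is standard for Bessel-type diffusions with sufficiently strong repulsion from zero, and is precisely the content of the analogous passage in \cite[Chapter VII]{FL12}. Once this is in hand, the remaining steps are the monotone convergence argument via exponential decay of $\coth(2t)-1$ described above.
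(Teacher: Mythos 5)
Your proposal is correct and follows the same basic route as the paper: both set $\eta_s = \int_0^s(\coth(2t(u))-1)\,du$ and reduce to almost-sure integrability. But your write-up actually fills a real gap. The paper's argument only records that $t(s)\to\infty$ a.s., hence $\coth(2t(s))\to 1$, and then asserts that the integral converges; convergence of the integrand to zero does not by itself yield integrability, so some quantitative rate is needed. You supply exactly that: the pathwise lower bound $t(s)\geq W_s^{(1)}+s$ (coming from $\coth\geq 1$, i.e.\ $\eta_s\geq 0$) together with the law of the iterated logarithm gives $t(u)\geq u/2$ for all large $u$, hence $\coth(2t(u))-1 = 2/(e^{4t(u)}-1) \leq 2/(e^{2u}-1)$ eventually, which is integrable. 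You also address the small-$s$ singularity of $\coth(2t(u))$ by reading finiteness of $\int_0^{s_0}\coth(2t(u))\,du = t(s_0)-W^{(1)}_{s_0}$ off the SDE itself, a point the paper's proof omits. One small nit: you claim $t(s)/s\to 1$ a.s., but the lower bound alone gives only $\liminf t(s)/s\geq 1$; the upper bound requires the very convergence of $\eta_s$ you are proving. This overclaim is harmless because all you use is $t(u)\geq u/2$ eventually, which follows from the lower bound and the LIL alone, but you should phrase that step as a one-sided bound to avoid circularity.
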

\begin{proof}
It is a classical fact that $t(s)\to\infty$ $\bP_\omega$-almost everywhere. This implies that $\lim_{s\to\infty} \coth (2t(s)) = 1$  almost everywhere. Setting $\eta_s \defeq  t(s) - W_s^{(1)}  - s$, so that, together with \eqref{tsdef}, we get \[\eta_s =  \int_{0}^s (\coth(2t(\sigma)) -1)d\sigma = \int_{0}^s \frac{2d\sigma}{e^{4t(\sigma)} -1}, \] which converges almost everywhere, as desired.
\end{proof}

Next, it will be crucial to stop the radial process before it exits the region bounded by a circle of geodesic radius $T$, and so for each $T$, we define the stopping time $\tau_T$ as follows \begin{align*}
\tau_T &\defeq \inf\{s>0 ~:~ T = d_{\bD}(0,\rho_s)\} \\ 
&~= \inf\{s>0 ~:~ T = W_s^{(1)} + s + \eta_{s}\}
\end{align*}
where the second equality follows by Lemma \ref{LFL}. Next, we will need the following lemma:
\begin{lem}\label{taublah}
For all $\omega \in X$, we have $\lim_{T\to\infty} \tau_T/T = 1$ $\bP_\omega$-almost everywhere. Moreover, we have that as $T\to\infty$, $\tau_T \to \infty$ $\bP_\omega$-almost everywhere.

\end{lem}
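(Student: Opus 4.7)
The plan is to reduce the statement to the strong law of large numbers for Brownian motion, via the decomposition $t(s) = W_s^{(1)} + s + \eta_s$ furnished by Lemma \ref{LFL}.

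First, I would establish that $\tau_T$ is almost surely finite and that $\tau_T \to \infty$ as $T\to\infty$. By the classical SLLN for Brownian motion, $W_s^{(1)}/s \to 0$ $\bP_\omega$-almost everywhere. Since $\eta_s$ converges to a finite limit $\eta_\infty$ almost everywhere by Lemma \ref{LFL}, we have
\[
\frac{t(s)}{s} \;=\; \frac{W_s^{(1)}}{s} + 1 + \frac{\eta_s}{s} \;\longrightarrow\; 1 \quad \bP_\omega\text{-a.e.}
\]
In particular $t(s) \to \infty$ a.s., so by the continuity of $s\mapsto t(s)$ and the intermediate value theorem, for each $T$ large enough there exists $s$ with $t(s) = T$; hence $\tau_T$ is almost surely finite, and the monotonicity of the family of events $\{\tau_T \leq s\}$ in $T$ forces $\tau_T \to \infty$ a.s. as $T \to \infty$.

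Second, I would use the defining relation at the stopping time. By continuity of paths, the definition of $\tau_T$ gives
\[
T \;=\; W^{(1)}_{\tau_T} + \tau_T + \eta_{\tau_T} \quad \bP_\omega\text{-a.e.}
\]
Dividing by $\tau_T$ and rearranging,
\[
\frac{T}{\tau_T} \;=\; 1 + \frac{W^{(1)}_{\tau_T}}{\tau_T} + \frac{\eta_{\tau_T}}{\tau_T}\,.
\]
Since $\tau_T \to \infty$ a.s., the SLLN for $W^{(1)}$ gives $W^{(1)}_{\tau_T}/\tau_T \to 0$, and the almost sure convergence of $\eta_s$ to $\eta_\infty$ gives $\eta_{\tau_T}/\tau_T \to 0$. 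Therefore $T/\tau_T \to 1$ almost everywhere, which is equivalent to $\tau_T/T \to 1$.

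The only mild subtlety is verifying that the SLLN may legitimately be evaluated along the random time $\tau_T$ rather than along a deterministic sequence; this is immediate because $\tau_T \to \infty$ almost surely and the underlying a.s. limits $W_s^{(1)}/s \to 0$ and $\eta_s \to \eta_\infty$ hold along every sequence tending to infinity on the same full-measure event. No part of this argument appears to present a genuine obstacle.
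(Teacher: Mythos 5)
Your proof is correct and follows the same route as the paper: both start from the decomposition $t(s) = W^{(1)}_s + s + \eta_s$ of Lemma~\ref{LFL} and the resulting stopping-time identity $T = W^{(1)}_{\tau_T} + \tau_T + \eta_{\tau_T}$. The only difference is that the paper invokes the law of the iterated logarithm to control $W^{(1)}_{\tau_T}/\tau_T$, whereas you use the strong law of large numbers $W^{(1)}_s/s \to 0$, which is the weaker fact and is indeed all that is needed here; your write-up also spells out the preliminary step that $\tau_T$ is a.s.\ finite and tends to infinity, which the paper leaves implicit.
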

\begin{proof} Observe that we have $\tau_T = T - W_{\tau_T}^{(1)} - \eta_{\tau_T}$. The lemma then follows immediately from the definition of the stopping time and the law of the iterated logarithm (see \cite[Corollary II.1.12]{RY}).\end{proof}

See also \cite[Lemma 4.2]{EFL01} for related and interesting results on this stopping time.

Recall that $\bP_{\omega}$ is the Wiener measure on the space of all Brownian trajectories $W_\omega$ starting at the origin (corresponding to the random point $\omega$). Let $\bP^\theta_{\omega}$ be the Wiener measure on the space $W^\theta_\omega$ corresponding to all paths starting at the origin and conditioned to exit at the point $e^{i \theta}$ in $\partial \bD^2$. To relate the conditioned process $\rho_s^\theta$ to the unconditioned process $\rho_s$, we will need the following lemma:
\begin{lem}\label{disintegration}
\begin{align}
\bP_\omega = \frac{1}{2\pi}\int_{0}^{2\pi}\bP^{\theta}_{\omega}\, d\theta  
\end{align}
\end{lem}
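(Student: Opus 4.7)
The plan is to identify the stated identity as the Rokhlin disintegration of $\bP_\omega$ along the exit-point map of hyperbolic Brownian motion, with the uniform measure on the boundary playing the role of the marginal. The argument splits into three classical ingredients.

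First, I would verify that hyperbolic Brownian motion is transient and has a well-defined boundary limit. By Lemma \ref{LFL}, $t(s) = W_s^{(1)} + s + \eta_s$ with $\eta_s$ convergent $\bP_\omega$-a.s., so $t(s) \to +\infty$ almost surely. The angular SDE $d\theta(s) = \frac{2}{\sinh(2t(s))} dW_s^{(2)}$ then has an almost-surely finite quadratic variation $\int_0^\infty \frac{4}{\sinh^2(2t(s))} ds$, because $\sinh^{-2}(2t(s))$ is eventually comparable to $e^{-4s}$. Consequently $\theta(s)$ converges almost surely to some $\theta_\infty \in [0, 2\pi)$, and the Brownian path $\rho_s$ has a well-defined boundary exit point $e^{i\theta_\infty} \in \partial\bD$.

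Second, I would identify the law of the exit point. The leafwise Laplacian $\laplacian_{L_\omega}$ is rotationally invariant about the origin (it is $\SO(2,\bR)$-invariant in geodesic polar coordinates, as visible in its expression), so the law of $\rho_s$ under $\bP_\omega$ is invariant under rotations of $\bD$ fixing the origin, and hence so is the law of $\theta_\infty$. Therefore the exit distribution equals normalized Lebesgue measure $\frac{d\theta}{2\pi}$ on $\partial\bD$; equivalently, this is the harmonic measure at the origin, as given by the Poisson kernel $P(0, e^{i\theta}) = \frac{1}{2\pi}$.

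Third, I would apply the Rokhlin disintegration theorem to the measurable map $W_\omega \to [0, 2\pi)$ sending a trajectory to its exit angle: since $W_\omega$ is a Polish space, there exists a family of probability measures $\{\bP^\theta_\omega\}$, each concentrated on the fiber $\{\theta_\infty = \theta\}$, such that
\begin{equation*}
\bP_\omega \;=\; \frac{1}{2\pi}\int_0^{2\pi} \bP^\theta_\omega \, d\theta \,.
\end{equation*}
By uniqueness of the disintegration, these fiberwise measures coincide $\frac{d\theta}{2\pi}$-a.e. with the "Brownian motion conditioned to exit at $e^{i\theta}$" described in the text; one may also construct them directly by Doob's $h$-transform with $h(z) = P(z, e^{i\theta})$ and verify the two constructions agree.

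The only real subtlety is the meaning of conditioning on the null event $\{\theta_\infty = \theta\}$; this is handled either by taking the Rokhlin disintegration as the definition of $\bP^\theta_\omega$ or, equivalently, by the Doob $h$-transform construction, both of which are standard. Everything else is a direct consequence of rotational symmetry and transience.
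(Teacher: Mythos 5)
Your proof is correct and follows essentially the same route as the paper's: establish the $\bP_\omega$-a.e.\ defined boundary exit map, identify its pushforward law as normalized Lebesgue measure on $\partial\bD$, and then disintegrate $\bP_\omega$ over the exit angle. You simply fill in the ``classical facts'' (transience via the radial SDE, uniform exit law via rotational symmetry of the leafwise Laplacian, explicit invocation of Rokhlin) that the paper takes for granted, and your closing mention of the Doob $h$-transform matches the remark the paper appends to the lemma.
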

\begin{proof}
Recall that $W_\omega$ is the space of all hyperbolic Brownian motion trajectories starting at the origin, with $\bP_\omega$ the corresponding Wiener measure. There exists a map $\Theta : W_\omega \to \partial \bD^2$, defined $\bP_\omega$-almost everywhere, such that $\Theta(\rho) = \rho_{\infty}$, where $\rho_\infty$ is the limit point of $\rho$ on $\partial \bD^2$. It is a classical fact that the push-forward measure $\Theta_{\ast}(\bP_\omega)$ equals $\text{Leb}$, where $\text{Leb}$ is the normalized Lebesgue measure on $[0,2\pi]$. We also recall that the foliated Brownian motion and the associated harmonic measure $\hat{\nu}_*$ are in fact defined on the space $X_*=\SO(2,\bR)\backslash X$, which is almost everywhere foliated by hyperbolic disks, and so our disintegration claim follows. 
\end{proof}
\begin{rem}
See also \cite[Lemma 8]{Fran05} for a short potential theoretic proof (using Doob's $h$-process) of this fact.  We also remark that the approach to proving a central limit theorem in \cite{Fran05}, in a different context but which is modeled after the Le Jan argument in \cite{L94}, leverages the same stopping time that we employ in this paper.
\end{rem}

\begin{rem}
It is worth repeating and adapting what is written in the introduction in view of the application of the conditioned process in the sequel. The conditioned process is in fact defined on $X_* = \SO(2,\bR)\backslash X$. The space $X_*$ gives rise to a fibered space $X^{W^\theta}_*$ over $X_*$ whose fiber over each point $\omega$ in $X_*$ is $W^\theta_\omega$, and which also supports a measure $\nu_{\bP^\theta}\defeq\nu\otimes \bP^\theta$, whose conditional measure over a point $\omega$ 
is $\bP^\theta_\omega$. 

We can thus similarly define the fibered  $W^\theta$-Hodge bundle $\bP^{W^\theta}(\text{\bf{H}}^{(k)})$, whose fiber over each point $(\omega,\rho^\theta)$ in $X^{W^\theta}_*$ is~$\textbf{H}^{(k)}_\omega$. A pair $(\rho^\theta,\text{\bf{v}}) \in \bP^{W^\theta}(\textbf{H}^{(k)})$ is thus defined to be the lift of the path $\rho^\theta$ (starting at $\omega$) to $\bP^{W^\theta}(\textbf{H}^{(k)})$, obtained by parallel transport with respect to the Gauss-Manin connection. This in turn would also give rise to a measure $\hat{\nu}_{\bP^\theta}\defeq\hat{\nu}\otimes \bP^\theta$ whose conditional measure over a point $\text{\bf{v}}$ is $\bP^\theta_\omega$. 
\end{rem}

We recall the following fundamental result due to Ancona \cite{Anc90} (see also \cite[Lemma 4.1]{Gru98}):

\begin{thm}\cite[Théorème 7.3]{Anc90}\label{Ancona}
For all $\omega \in X$, and $\bP_\omega$-almost all paths $\rho$ starting at $\omega$, we have that $d_{\bD}(\rho_{0}\rho_{\infty}, \rho_{T}) = O(\log T)$ as $T\to\infty$, where $\rho_{0}\rho_{\infty}$ is the geodesic ray with $\rho_{0}\in \bD$ and $\rho_{\infty} \in \partial \bD$.
\end{thm}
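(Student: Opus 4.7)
The plan is to avoid the general potential-theoretic machinery of \cite{Anc90} and give a direct stochastic-calculus argument that exploits the explicit geometry of the Poincar\'e disk. Writing $\rho_s = (t(s), \theta(s))$ in geodesic polar coordinates around the origin $\omega$, the key observation is that the angular martingale $\theta$ contracts to its limit at precisely the exponential rate at which $\sinh(2t)$ blows up, so that the Brownian radial fluctuations cancel in the final distance estimate, leaving only a polynomial---and hence logarithmic---error after inversion of the hyperbolic distance formula.

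For the radial component, Lemma~\ref{LFL} gives $t(s) = W^{(1)}_s + s + \eta_s$ with $\eta_s$ converging almost surely, so $t(T) = T + W^{(1)}_T + O(1)$. For the angular component, the Dambis--Dubins--Schwarz representation writes $\theta(s) = \widetilde W_{A(s)}$ for an auxiliary Brownian motion $\widetilde W$ independent of the radial filtration, with $A(s) = \int_0^s 4\,\sinh^{-2}(2t(u))\, du$, which has an almost surely finite limit $A_\infty$. A change of variables $u = T+v$ combined with $\sinh^2(2t) \asymp e^{4t}$ gives
\[
V(T) := A_\infty - A(T) \asymp e^{-4 t(T)}\, Y, \qquad Y := \int_0^\infty e^{-4 v - 4 B_v}\, dv,
\]
where $B_v := W^{(1)}_{T+v} - W^{(1)}_T$ is, by the strong Markov property, a standard Brownian motion independent of $\cF_T$, and $Y$ is almost surely finite. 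The law of the iterated logarithm for $\widetilde W$ at the finite terminal time $A_\infty$ (applied after conditioning on $A_\infty$) then yields
\[
|\theta(T) - \Theta|^2 \le C\, V(T)\, \log\log(1/V(T)) \le C'\, Y\,(\log T)\, e^{-4 t(T)}
\]
almost surely for $T$ large, where $\Theta$ is determined by $\rho_\infty = e^{i\Theta}$.

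In the paper's curvature $-4$ normalization, the hyperbolic sine rule gives that the perpendicular distance from a polar point $(t, \theta)$ to the geodesic ray from the origin at angle $\Theta$ satisfies $\sinh(2 d) = \sinh(2 t)\,|\sin(\theta - \Theta)|$. Combining this with the two preceding estimates, the radial factor $\sinh(2 t(T)) \asymp e^{2 t(T)}$ cancels against the angular factor $e^{-2 t(T)}$, yielding
\[
\sinh\bigl(2\, d_\bD(\rho_T, \rho_0 \rho_\infty)\bigr) \le \sinh(2 t(T)) \cdot C\, e^{-2 t(T)}\sqrt{Y \log T} = O(\sqrt{\log T}),
\]
hence $d_\bD(\rho_T, \rho_0 \rho_\infty) = \tfrac{1}{2}\operatorname{arcsinh}(O(\sqrt{\log T})) = O(\log\log T)$ almost surely, which in particular is $O(\log T)$.

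The main obstacle is the almost-sure analysis of $V(T)$: the Brownian fluctuations $W^{(1)}_T$ appear both in $t(T)$ and inside the angular exponential contraction, and one must make the pathwise cancellation precise via the exponential functional $Y$, whose independence of $\cF_T$ comes from the strong Markov property. A secondary point is the LIL at the random terminal time $A_\infty$, handled by conditioning on $A_\infty$ and invoking the standard Brownian LIL; both steps are considerably more elementary than Ancona's Harnack-inequality argument in the general negatively-curved manifold setting.
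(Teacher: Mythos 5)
Your proposal offers a genuinely different route from the paper. The paper does not prove Theorem~\ref{Ancona} at all---it simply cites \cite{Anc90}, where the result is established for quite general Cartan--Hadamard manifolds by potential-theoretic methods (Harnack inequalities for harmonic functions). You are instead proposing a self-contained stochastic-calculus argument tailored to constant curvature, and the central idea---that the $e^{2t(T)}$ from the sine rule cancels against the $e^{-2t(T)}$ inside the angular tail variance $V(T)$, because the \emph{same} radial fluctuation $W^{(1)}_T$ appears in both---is correct and is the right observation. The DDS representation, the strong Markov decomposition $V(T)\asymp e^{-4t(T)}\,Y$, and the conditioning on the radial filtration before applying the LIL at the (then deterministic) time $A_\infty$ are all sound steps.

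There is, however, a gap in the last two displays. You write the bound as $O(\sqrt{Y\log T})$ and then silently drop $Y$ to conclude $O(\sqrt{\log T})$ and hence $O(\log\log T)$. But $Y=Y_T$ is a Dufresne-type exponential functional $\int_0^\infty e^{-4v-4B^{(T)}_v}\,dv$ built from the post-$T$ increments of $W^{(1)}$; it is a random variable that varies with $T$, has only polynomial tails, and is certainly not uniformly bounded over $T$. What you actually obtain is
$$
d_\bD(\rho_T,\rho_0\rho_\infty) \;\lesssim\; \tfrac12\log Y_T + \tfrac12\log\log T + O(1)\,,
$$
and one must still prove that $\log Y_T = O(\log T)$ almost surely, uniformly in $T$. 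This can be done---estimate $\sup_{T\in[n,n+1]}Y_T$ by $C\,e^{4M_n}Y_n$ with $M_n$ the oscillation of $W^{(1)}$ on $[n,n+1]$, then Borel--Cantelli using the Gaussian tail of $M_n$ and the power-law tail of $Y_n$---but it is an essential step that must be spelled out, and once done the bound you recover is exactly $O(\log T)$, not the stronger $O(\log\log T)$ you claim. The improvement to $\log\log$ is an overstatement; the $Y_T$ fluctuations genuinely contribute at the $\log T$ scale. Since the theorem only asserts $O(\log T)$, your argument can be repaired to give a correct proof of the stated result, but the claimed sharper rate should be removed and the uniform control of $Y_T$ supplied.
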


Now observe that our aim is to study
\begin{align*}
\Sigma^{g}(T,[a,b]) \defeq \hat{\nu}\left(\left\{\text{\bf{v}} \in \bP(\text{\bf{H}}^{(k)}) ~:~ a \leq \frac{1}{\sqrt{T}} (\sigma_k(g_{T}, \text{\bf{v}})- T\lambda_{(k)}) \leq b\right\}\right)
\end{align*}

as $T\to\infty$.

Let 
\begin{align*}
\Sigma^{\rho}(T,[a,b]) \defeq \hat{\nu}_{\bP} \left(\left\{(\rho,\text{\bf{v}}) \in \bP^W(\text{\bf{H}}^{(k)}) ~:~ a \leq \frac{1}{\sqrt{T}} (\sigma_k(\rho_{\tau_{T}}, \text{\bf{v}})- T\lambda_{(k)}) \leq b\right\}\right)\label{trivial}
\end{align*}

\begin{lem}\label{Ancona2}
The quantity \begin{align}
|\Sigma^{g}(T,[a,b])-\Sigma^{\rho}(T,[a,b])|\to 0
\end{align} as $T\to\infty$, $\bP_\omega$-almost everywhere and for all $\omega \in X$. 
\end{lem}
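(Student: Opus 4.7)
The plan is to relate the two distributions by bounding, trajectory by trajectory, the displacement between the stopped endpoint $\rho_{\tau_T}$ and the deterministic geodesic endpoint $g_T r_{\Theta(\rho)}\omega$, where $\Theta(\rho)=\rho_\infty$ is the asymptotic angle of $\rho$, and then transferring this pointwise estimate to the distribution functions via an $\varepsilon$-thickening argument.

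First, by the disintegration of Lemma \ref{disintegration},
\[
\Sigma^{\rho}(T,[a,b]) = \frac{1}{2\pi}\int_0^{2\pi} \hat{\nu}_{\bP^\theta}\Bigl(a \leq \tfrac{1}{\sqrt{T}}(\sigma_k(\rho^\theta_{\tau_T},\textbf{v})-T\lambda_{(k)}) \leq b\Bigr)\, d\theta\,.
\]
For $\bP^\theta_\omega$-a.e.\ conditioned trajectory $\rho^\theta$, the endpoint $\rho^\theta_{\tau_T}$ lies on the hyperbolic circle of radius $T$ about the origin, as does $g_T r_\theta \omega$, which is the crossing of that circle with the geodesic ray from $0$ to $e^{i\theta}$. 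Ancona's estimate (Theorem \ref{Ancona}) applied to the unconditioned path, together with a standard hyperbolic-geometry argument that projects onto the geodesic at the common radius $T$, then forces
\[
d_{\bD}\bigl(\rho^\theta_{\tau_T},\, g_T r_\theta \omega\bigr) = O(\log T)\,.
\]

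Next, from \eqref{lipschitzprop} together with $|\Psi_k|\leq k$ (Theorem \ref{det}), the function $z\mapsto \sigma_k(z,\textbf{v})$ is Lipschitz on $\bD$ with constant $k$, uniformly in $\textbf{v}$. Combining with the Ancona bound and dividing by $\sqrt{T}$,
\[
\tfrac{1}{\sqrt{T}}\bigl|\sigma_k(\rho^\theta_{\tau_T},\textbf{v})-\sigma_k(g_T r_\theta \omega,\textbf{v})\bigr| = O\!\left(\tfrac{\log T}{\sqrt{T}}\right) \longrightarrow 0
\]
almost surely, hence in probability. By the $\SO(2,\bR)$-invariance of $\hat{\nu}$, for each fixed $\theta$ the $\hat{\nu}$-distribution of $\tfrac{1}{\sqrt{T}}(\sigma_k(g_T r_\theta \omega,\textbf{v})-T\lambda_{(k)})$ equals $\Sigma^g(T,\cdot)$; averaging over $\theta$, a standard $\varepsilon$-thickening yields, for every $\varepsilon>0$,
\[
\bigl|\Sigma^g(T,[a,b])-\Sigma^\rho(T,[a,b])\bigr| \leq \Sigma^g\bigl(T,[a-\varepsilon,a+\varepsilon]\cup[b-\varepsilon,b+\varepsilon]\bigr) + o_T(1)\,.
\]

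The main obstacle is the final $\varepsilon\to 0$ step, which requires ruling out asymptotic concentration of $\Sigma^g$ near $a$ or $b$. I would first compare $\Sigma^\rho$ (which uses the stopping time $\tau_T$) with the Brownian-time distribution $\hat{\nu}_\bP\bigl(a \leq \tfrac{1}{\sqrt{T}}(\sigma_k(\rho_T,\textbf{v})-T\lambda_{(k)}) \leq b\bigr)$, using $\tau_T/T\to 1$ from Lemma \ref{taublah} and a second application of the Lipschitz property to control the hyperbolic displacement $d_{\bD}(\rho_{\tau_T},\rho_T)$; Theorem \ref{rclt} then identifies the limit as a continuous Gaussian, and this continuity passes back to $\Sigma^g$ through the thickening estimate above.
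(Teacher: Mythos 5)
Your first three steps — the disintegration of Lemma~\ref{disintegration}, the application of Ancona's estimate at the stopping time, and the Lipschitz bound from~\ref{lipschitzprop} giving an error of order $\log T/\sqrt T$ — are exactly what the paper does, and you are in fact slightly more careful than the paper: you note that Theorem~\ref{Ancona} gives the distance from $\rho^\theta_{\tau_T}$ to the geodesic \emph{ray}, and you add the (correct) hyperbolic-geometry step showing that since $\rho^\theta_{\tau_T}$ and $g_T r_\theta\cdot 0$ both lie on the circle of radius $T$, the distance to the nearest point on the ray controls the distance to $g_T r_\theta\cdot 0$ up to a factor of two. You are also right that the paper's one-line conclusion ``the lemma now follows by the Lipschitz property'' secretly needs the non-concentration of $\Sigma^g$ (equivalently of $\Sigma^\rho$) near $a$ and $b$; this is only established further down, so the lemma's statement should really be read together with the subsequent analysis of $\Sigma^\rho$, and your $\varepsilon$-thickening display makes the missing ingredient explicit.

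The mechanism you propose to close this, however, has a genuine gap. You suggest comparing $\Sigma^\rho$ with the Brownian-time distribution $\hat\nu_\bP\bigl(a\le \frac{1}{\sqrt T}(\sigma_k(\rho_T,\textbf{v})-T\lambda_{(k)})\le b\bigr)$ by a second Lipschitz bound on $d_\bD(\rho_{\tau_T},\rho_T)$. But these two points have radial coordinates $T$ and $t(T)=W^{(1)}_T+T+\eta_T$ respectively, so
\[
d_\bD(\rho_{\tau_T},\rho_T)\ \ge\ \bigl|T-t(T)\bigr| = \bigl|W^{(1)}_T+\eta_T\bigr|,
\]
which by the law of the iterated logarithm is of order $\sqrt T$, not $o(\sqrt T)$. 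After dividing by $\sqrt T$ the Lipschitz bound therefore gives an $O(1)$ quantity, not one that vanishes, and indeed $\Sigma^\rho$ and the Brownian-time distribution do \emph{not} converge to the same Gaussian: formula~\eqref{dvarianceformula} gives $V^{(k)}_{g_\infty}=V^{(k)}_{\rho_\infty}-\lambda_{(k)}^2$, so the two limiting variances differ whenever $\lambda_{(k)}>0$. The correct way to obtain the continuity of the limit of $\Sigma^\rho$ is the stochastic-calculus decomposition into the three terms~\eqref{etas}, \eqref{extraterm}, \eqref{randomco}, together with the martingale central limit theorem and the covariance computation of Lemma~\ref{covariancelem}; this is precisely what the rest of the paper's proof of Theorem~\ref{dclt} supplies, and it is what licenses the $\varepsilon\to 0$ step in your thickening estimate.
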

\begin{proof}
Let $\text{Leb}$ denote the normalized Lebesgue (probability) measure on $[0, 2\pi]$. By applying the disintegration in Lemma \ref{disintegration}, \eqref{trivial} is also equal to 

\begin{align*}
&\Sigma^{\rho}(T,[a,b]) =   \text{Leb} \otimes  \hat{\nu}_{\bP^\theta}  \left(\left\{(\theta,\rho^\theta,\text{\bf{v}}) \in  [0,2\pi]\otimes  \bP^{W^\theta}(\text{\bf{H}}^{(k)}) ~:~ \right.\right.\\  &\left.\left. a \leq \frac{1}{\sqrt{T}} (\sigma_k(\rho^\theta_{\tau_{T}}, \text{\bf{v}})- T\lambda_{(k)}) \leq b\right\}\right) \\ &= \text{Leb} \otimes  \hat{\nu}_{\bP^\theta} \left(\left\{(\theta,\rho^\theta,\text{\bf{v}}) \in  [0,2\pi]\otimes  \bP^{W^\theta}(\text{\bf{H}}^{(k)}) ~:~ \right.\right.\\&\left.\left. a \leq \frac{1}{\sqrt{T}} \left( \sigma_k(g_T r_\theta, \text{\bf{v}})- T\lambda_{(k)} +\sigma_k(\rho^\theta_{\tau_{T}}, \text{\bf{v}})-\sigma_k(g_T r_\theta, \text{\bf{v}})\right)\leq b\right\}\right) \,.
\end{align*}

Theorem \ref{Ancona} applied to $\tau_T$ gives that, for all $\omega \in X$, $d_{\bD}(g_T r_\theta \cdot 0, \rho^\theta_{\tau_T}) = O(\log \tau_T)$ $\bP^\theta_\omega$-almost everywhere as $T\to\infty$. Together with Lemma \ref{taublah}, the lemma now follows by the Lipschitz property of the Kontsevich-Zorich cocycle (by the derivative bound in \eqref{lipschitzprop}).
\end{proof}

Therefore, it suffices to study the limiting distribution of the quantity \[\frac{1}{\sqrt{T}} (\sigma_k(\rho_{\tau_{T}}, \text{\bf{v}})- T\lambda_{(k)}).\] Observe that we have that for all $\omega \in X$, and $\bP_\omega$-almost everywhere, $\tau_T \to \infty$ as $T\to\infty$. By applying the stopping time identity $T = \tau_T + W_{\tau_T}^{(1)} + \eta_{\tau_T}$, a straightforward calculation shows the following equality:
 \begin{align}
 \frac{1}{\sqrt{T}} (\sigma_k(\rho_{\tau_T},\text{\bf{v}})- T\lambda_{(k)}) =&-\frac{1}{\sqrt{T}}\eta_{\tau_T}\lambda_{(k)} \label{etas} \\
 &-\frac{1}{\sqrt{T}}W_{\tau_T}^{(1)}\lambda_{(k)} \label{extraterm} \\
 &+\frac{1}{\sqrt{T}}(\sigma_k(\rho_{\tau_T},\text{\bf{v}}) -\tau_T \lambda_{(k)}) \label{random} \end{align}

We recall that by formulas \eqref{itointegralformula} and
\eqref{M_T_identity} we have
\begin{align*}
M_T = \int_0^T \gradient_{L_\omega} (\sigma_k(\rho_s,\text{\bf{v}})-u(\rho_s,\bfv) ) \cdot (dW^{(1)}_s,dW^{(2)}_s)
\end{align*}
and therefore
\begin{align*}
\frac{1}{\sqrt{T}} \Big(\sigma_k(\rho_{\tau_T},\text{\bf{v}}) - \tau_T\lambda_{(k)}\Big) &= \frac{1}{\sqrt{T}}\Big(u(\rho_{\tau_T},\bfv)-u(\rho_0,\bfv) +\sigma_k(\rho_0,\text{\bf{v}}) \Big) \\ &  + \frac{1}{\sqrt{T}}M_{\tau_T}\,. 
\end{align*}
Since the function $u \in L^2(\bP(\text{\bf{H}}^{(k)}),\nu)$, it follows that  
$$
\begin{aligned}
\bE_{{\hat{\nu}_\bP}} &\Big( \frac{\vert \sigma_k(\rho_{\tau_T},{\bf v} -\lambda_{(k)} \tau_T - M_{\tau_T}\vert^2}{T}\Big) \\ & \qquad= \frac{1}{T}  \bE_{{\hat{\nu}_\bP}}(\vert u(\rho_{\tau_T}, \bfv)- u(\rho_0, \bfv ) + \sigma_k(\rho_0, \bfv)\vert^2)
\end{aligned}
$$
converges to $0$. Thus the random variable $(\sigma(\rho_T,{\bf v}) -\lambda_{(k)} T-M_{\tau_T})/\sqrt{T}$ converges to $0$ in square mean, hence in distribution.

\smallskip
It suffices then to study the asymptotic distribution of \begin{align*}
 -\frac{1}{\sqrt{T}}&W_{\tau_T}^{(1)}\lambda_{(k)}+\frac{1}{\sqrt{T}}M_{\tau_T} \\ &= \frac{1}{\sqrt{T}}\int_0^{\tau_T}(-\lambda_{(k)}+\frac{\partial\sigma_k}{\partial t}(\rho_s, \bfv)- \frac{\partial u}{\partial t}(\rho_s,\bfv))dW_s^{(1)}\\
 &+\frac{1}{\sqrt{T}}\int_0^{\tau_T}\frac{2}{\sinh(2t(s))}(\frac{\partial\sigma_k}{\partial \theta}(\rho_s, \bfv)- \frac{\partial u}{\partial \theta} \,.(\rho_s,\bfv))dW_s^{(2)}\end{align*} 

It follows from Theorem \ref{flj:mclt} that the
law of the random variable $$ -\frac{1}{\sqrt{T}}W_{\tau_T}^{(1)}\lambda_{(k)}+\frac{1}{\sqrt{T}}M_{\tau_T}$$  converges, as \(t \to \infty\), towards the centered Gaussian law with variance  
\[
V^{(k)}_{g_\infty}= \int_{\bP(\text{\bf{H}}^{(k)})} \Big( \left\vert -\lambda_{(k)}+\frac{\partial\sigma_k}{\partial t}- \frac{\partial u}{\partial t}  \right\vert^2 + \left\vert\frac{2}{\sinh(2t)}(\frac{\partial\sigma_k}{\partial \theta}- \frac{\partial u}{\partial \theta}) \right\vert^2  \Big)\, d\hat\nu\,.
\]
To conclude the proof of  Theorem \ref{dclt}, we simplify the
expression of the variance. In fact,
$$
V^{(k)}_{g_\infty}= \lambda_{(k)}^2 -2 \lambda_{(k)}\int_{\bP(\text{\bf{H}}^{(k)})} \left(\frac{\partial\sigma_k}{\partial t}-\frac{\partial u}{\partial t}\right) d \hat \nu + \int_{\bP(\text{\bf{H}}^{(k)})} | \gradient_{L_\omega}\sigma_k - \gradient_{L_\omega}u|^2 d \hat \nu \,.
$$
We recall that by formula \eqref{variance} we have
$$
V^{(k)}_{\rho_\infty}=\int_{\bP(\text{\bf{H}}^{(k)})} | \gradient_{L_\omega}\sigma_k - \gradient_{L_\omega}u|^2 d \hat \nu =\int_{\bP(\bfH^{(k)})} |\Psi_k -\gradient_{L_\omega}u|^2 d\hat{\nu} \,. 
$$
In addition, since $u \in L^2(\bP(\text{\bf{H}}^{(k)}), d\hat\nu)
\subset L^1(\bP(\text{\bf{H}}^{(k)}), d\hat\nu)$ and the measure
$\hat \nu$ is invariant under the Teichm\"uller geodesic flow, we have
$$
\int_{\bP(\text{\bf{H}}^{(k)})} \frac{\partial u}{\partial t} \, d \hat \nu =0\,.
$$
since the radial derivative in each Teichm\"uller disk coincides with
the Lie derivative of the Teichm\"uller flow.

Finally, since the measure $\hat \nu$ is supported on the unstable Oseledets subbundle $E^+_k$ over $X$, we have
$$
\int_{\bP(\text{\bf{H}}^{(k)})} \frac{\partial\sigma_k}{\partial t} d \hat \nu =\lambda_{(k)}\,.
$$
The above identity can be proved as follows. By the ergodic theorem, and by Oseledets theorem, for $\nu$-almost all $\omega \in X$, we have
$$
\begin{aligned}
\int_{\bP(\text{\bf{H}}^{(k)})} \frac{\partial\sigma_k}{\partial t} d \hat \nu &= \lim_{T\to \infty}\frac{1}{T} \int_0^T \frac{\partial\sigma_k}{\partial t} (g_t(\omega, E^+_k(\omega)) dt \\ & = \lim_{T\to \infty} \frac{1}{T} \Big(\sigma_k(g_T(\omega, E^+_k(\omega)) -
\sigma_k(\omega, E^+_k(\omega)) \Big) = \lambda_{(k)}\,.
\end{aligned} $$
In conclusion we have proved that indeed
\begin{align} V^{(k)}_{g_\infty}= V^{(k)}_{\rho_\infty} + - 2\lambda_{(k)}^2 + \lambda_{(k)}^2 = V^{(k)}_{\rho_\infty}- \lambda_{(k)}^2 \,.\label{dvarianceformula}
\end{align} \hfill\qedsymbol

\section{Positivity of the variance}\label{posvar}
\subsection{Random cocycle}\label{vrclt}
Recall that \eqref{dvarianceformula} says that $V^{(k)}_{g_\infty} = V^{(k)}_{\rho_\infty}  -\lambda_{(k)}^2$, and so we also have the following important corollary: 
\begin{cor}
If $\lambda_k>\lambda_{k+1}$, then $V^{(k)}_{\rho_\infty} > 0$. \end{cor}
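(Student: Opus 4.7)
The plan is that this corollary follows immediately from the variance identity already established in the proof of Theorem \ref{dclt}, combined with Forni's positivity theorem for the top Lyapunov exponent. Specifically, equation \ref{dvarianceformula} states
\[
V^{(k)}_{g_\infty} = V^{(k)}_{\rho_\infty} - \lambda_{(k)}^2 \,,
\]
and by Theorem \ref{dclt} the quantity $V^{(k)}_{g_\infty}$ is a non-negative real number (being the variance of the limiting distribution of $\frac{1}{\sqrt{T}}(\sigma_k(g_T, \textbf{v}) - T\lambda_{(k)})$, which is either a centered Gaussian or, in the degenerate case, a delta distribution). Rearranging, $V^{(k)}_{\rho_\infty} \geq \lambda_{(k)}^2$, so it suffices to show $\lambda_{(k)} > 0$.

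For the latter, I would invoke Forni's positivity theorem \cite{For02}, which guarantees $\lambda_1 > 0$ on any strongly irreducible symplectic $\SL(2,\bR)$-invariant subbundle $\textbf{H}$ that is symplectic orthogonal to the tautological subbundle. This is precisely the standing hypothesis of the corollary. Since the exponents $\lambda_1 \geq \lambda_2 \geq \dots \geq \lambda_h$ are the non-negative part of the KZ spectrum on $\textbf{H}$, we have $\lambda_i \geq 0$ for all $i \in \{1, \dots, k\}$, hence
\[
\lambda_{(k)} = \sum_{i=1}^{k} \lambda_i \geq \lambda_1 > 0 \,.
\]
Combining, $V^{(k)}_{\rho_\infty} \geq \lambda_{(k)}^2 \geq \lambda_1^2 > 0$, as desired.

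There is no real obstacle here: the work has already been done in establishing the covariance computation of Lemma \ref{covariancelem} and the resulting identity \ref{dvarianceformula}. The main content of the corollary is the observation that the stochastic integral against $W^{(1)}$ on the deterministic side of the argument \emph{subtracts} a nontrivial variance contribution $\lambda_{(k)}^2$ from $V^{(k)}_{\rho_\infty}$, which in turn forces $V^{(k)}_{\rho_\infty}$ to exceed this lower bound. Under the hypothesis $\lambda_k > \lambda_{k+1}$ (needed for Theorem \ref{rclt} to be applicable in the first place), no additional argument is required.
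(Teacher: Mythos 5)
Your argument is structurally the same as the paper's: both invoke the identity $V^{(k)}_{g_\infty} = V^{(k)}_{\rho_\infty} - \lambda_{(k)}^2$ from \eqref{dvarianceformula}, the trivial bound $V^{(k)}_{g_\infty}\geq 0$, and the chain $\lambda_{(k)}\geq\lambda_1>0$. However, your justification for the last inequality is wrong. You attribute to \cite{For02} a theorem guaranteeing $\lambda_1>0$ on \emph{every} strongly irreducible symplectic $\SL(2,\bR)$-invariant subbundle symplectic-orthogonal to the tautological bundle. No such general statement is proved there: \cite{For02} establishes non-degeneracy of the KZ spectrum for the canonical (Masur--Veech) measures on strata, and positivity of exponents on an arbitrary invariant subbundle over an arbitrary $\SL(2,\bR)$-invariant measure is emphatically \emph{not} automatic. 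The paper itself points out that the second fundamental form $B$ vanishes identically on the symplectic orthogonal of the tautological bundle for the Eierlegende Wollmilchsau and Ornithorynque orbit closures, forcing the corresponding exponents to vanish, so an unconditional positivity theorem in the generality you assert cannot hold.

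The repair is easier than the road you took: $\lambda_1>0$ is already forced by the standing hypothesis $\lambda_k>\lambda_{k+1}$, with no appeal to any positivity theorem. Since $\lambda_1\geq\dots\geq\lambda_h\geq 0$ and the cocycle is symplectic, if $k<h$ then $\lambda_k>\lambda_{k+1}\geq 0$, while if $k=h$ then $\lambda_{h+1}=-\lambda_h$ so $\lambda_h>-\lambda_h$. In either case $\lambda_k>0$, whence $\lambda_1\geq\lambda_k>0$ and $\lambda_{(k)}\geq\lambda_1>0$. With that substitution your proof is correct and coincides with the paper's (the paper is itself terse about why $\lambda_1>0$, but the hypothesis-driven argument just given is the intended one).
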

\begin{proof}
Since, by construction, $V^{(k)}_{g_\infty}\geq 0$, and we have that $V^{(k)}_{\rho_\infty} \geq \lambda_{(k)}^2  >0$, and it is clear that, since $\lambda_{(k)} = \sum_{i=1}^k\lambda_i$, we have $\lambda_{(k)}^2 \geq  \lambda_1^2>0$.
\end{proof}

\subsection{Deterministic cocycle}
\label{posvar:det}
While \eqref{dvarianceformula} ensures convergence of the asymptotic variance for the deterministic cocycle, it is not clear to us how it can be leveraged to deduce its positivity. Instead, we approach the positivity of the variance for the deterministic cocycle directly, in the spirit of the potential theoretic approach in \cite{For02}. We first observe that a direct expression of the converging asymptotic variance for the deterministic cocycle is 
\begin{align}
\label{eq:det_var}
V^{(k)}_{g_\infty} = \lim_{T\to\infty} \frac{1}{T} \int_{\bP(\textbf{H})} \left[\sigma_k(g_T, \text{\textbf{v}}) -\lambda_{(k)} T \right]^2 \,d\hat{\nu}
\end{align}

The existence and the regularity of the solution 
$u$ of the Poisson equation \eqref{poissoneq} will be again crucial  for our approach towards the positivity of $V^{(k)}_{g_\infty}$. 

\smallskip
\noindent Let $F_k(g_T r_\theta,\text{\textbf{v}}) \defeq \sigma_k(g_T r_\theta, \text{\textbf{v}}) -\lambda_{(k)} T$. In fact, we will study an auxiliary random variable $F_k- u$, and use it at the end to deduce the positivity of the asymptotic variance $V^{(k)}_{g_\infty}$. 

\smallskip 
\noindent Let $\Psi_k$ the vector valued function defined in formula \eqref{eq:PhiPsi}: 
$$
\Psi_k(\omega, {\bf v}) = \text{tr} \left(B^{(k)}_\omega ({\bf v}) \right)\,,  \qquad \text{ for all } (\omega, {\bf v}) \in \bP(\textbf{H}^{(k)})   \,,.
$$
We prove below the following condition for the vanishing of the deterministic variance.
\begin{lem}
\label{lem:det_var}
The variance $V^{(k)}_{g_\infty}$  of the deterministic cocycle (see formula \eqref{eq:det_var}) vanishes if and only if
$$
\Psi_k \circ E^+_k - (\lambda_{(k)},0) -\gradient_{L_\omega} u \circ E_k^+ =0\,  \qquad \text{$\nu$-almost everywhere}\,.
$$
\end{lem}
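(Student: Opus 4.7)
The plan is to reduce the statement to a completion-of-squares identity using the two closed-form variance expressions already derived in the proof of Theorem~\ref{dclt}. Specifically, I will combine
\[
V^{(k)}_{g_\infty} = V^{(k)}_{\rho_\infty} - \lambda_{(k)}^2 \qquad (\text{from } \eqref{dvarianceformula})
\]
with the expression for the random variance
\[
V^{(k)}_{\rho_\infty} = \int_X \bigl|\Psi_k(E^+_k(\omega)) - 2\cD U(\omega)\bigr|^2 \, d\nu \qquad (\text{from } \eqref{variance}).
\]

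The first step is to set $a := \Psi_k \circ E^+_k - (\lambda_{(k)}, 0) - 2\cD U \in \bR^2$, so that $\Psi_k \circ E^+_k - 2\cD U = a + (\lambda_{(k)}, 0)$, and to expand
\[
\bigl|\Psi_k \circ E^+_k - 2\cD U\bigr|^2 = |a|^2 + 2\lambda_{(k)} a_1 + \lambda_{(k)}^2,
\]
where $a_1 = \Re \Psi_k \circ E^+_k - \lambda_{(k)} - 2XU$ is the first (radial) component of $a$. Integrating over $(X, \nu)$ and subtracting $\lambda_{(k)}^2$ via \eqref{dvarianceformula} gives
\[
V^{(k)}_{g_\infty} = \int_X |a|^2 \, d\nu + 2\lambda_{(k)} \int_X a_1 \, d\nu.
\]

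The crucial step is the vanishing of the cross term $\int_X a_1 \, d\nu$, which splits into two identities:
\[
\int_X \Re \Psi_k \circ E^+_k \, d\nu = \lambda_{(k)} \qquad \text{and} \qquad \int_X XU \, d\nu = 0.
\]
The second is immediate from skew-adjointness of $X$ on $L^2(X,\nu)$, valid because $X$ generates the $\nu$-preserving Teichm\"uller flow and $U \in W^{2,2}(X, \nu)$ lies in the domain of $X$. For the first, I would exploit the identity $\nabla_{L_\omega} \sigma_k = \Psi_k$ from Theorem~\ref{det}: integrating the radial component of the gradient along the Teichm\"uller geodesic yields $\sigma_k(g_T, \mathbf{v}) - \sigma_k(\mathbf{v}) = \int_0^T \Re \Psi_k(g_s \omega, g_s \mathbf{v}) \, ds$. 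Dividing by $T$ and passing to the limit, the left-hand side converges to $\lambda_{(k)}$ by the Oseledets theorem, while the right-hand side---using the Lipschitz bound \eqref{lipschitzprop} together with the exponential projective convergence of $g_s \mathbf{v}$ to $E^+_k(g_s \omega)$---equals the Birkhoff average of $\Re \Psi_k \circ E^+_k$ along the $g_t$-orbit, hence $\int_X \Re \Psi_k \circ E^+_k \, d\nu$ by ergodicity of $(g_t, \nu)$.

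Once the cross term is eliminated, we are left with
\[
V^{(k)}_{g_\infty} = \int_X \bigl|\Psi_k \circ E^+_k - (\lambda_{(k)}, 0) - 2\cD U\bigr|^2 \, d\nu,
\]
a non-negative integral which vanishes if and only if the integrand vanishes $\nu$-almost everywhere. This is exactly the stated equivalence. I expect the only genuinely delicate step to be the Lyapunov identity $\int \Re \Psi_k \circ E^+_k \, d\nu = \lambda_{(k)}$, but this is a direct gradient-analogue of the Laplacian identity for $\Phi_k$ in Theorem~\ref{det} and requires no new ingredients beyond Oseledets plus the Lipschitz bound \eqref{lipschitzprop}.
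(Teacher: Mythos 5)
Your proof is correct in its overall logic, but it takes a genuinely different route from the paper. The paper derives the identity
\[
V^{(k)}_{g_\infty} = \int_X \bigl|\Psi_k\circ E^+_k - (\lambda_{(k)},0) - 2\cD U\bigr|^2\,d\nu
\]
directly and self-containedly: it applies the mean-value formula of Forni ([For02, Lemma 3.1]) to the function $(F_k-u)^2$, shows the resulting cross term $\int (F_k-u)\,\laplacian_{L_\omega}(F_k-u)$ disappears in the limit because $\laplacian_{L_\omega}(F_k-u)\to 0$ exponentially (this uses the Poisson equation and Oseledets), and then uses the triangle inequality together with $U\in L^2$ to transfer the result from $F_k-U$ to $F_k$. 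You instead combine the two already-established closed forms \eqref{dvarianceformula} and \eqref{variance} via a completion of squares, reducing everything to the single cross-term identity $\int_X \Re(\Psi_k\circ E^+_k)\,d\nu = \lambda_{(k)}$. That reduction is clean and recycles the machinery of Theorems~\ref{rclt} and \ref{dclt} efficiently; one gains economy, but also inherits any implicit hypotheses those results carry (in particular, that the $V^{(k)}_{g_\infty}$ appearing in the distributional limit of Theorem~\ref{dclt} coincides with the second-moment limit in \eqref{eq:det_var}, an identification the paper also uses without comment). Your proof of the cross-term identity — fundamental theorem of calculus along the radial coordinate, the Oseledets a.e.\ limit $\sigma_k(g_T,\mathbf{v})/T\to\lambda_{(k)}$, the exponential projective convergence of $g_s\mathbf{v}$ to $E^+_k(g_s\omega)$ combined with Lipschitzness of $\Psi_k$, and Birkhoff for the geodesic flow — is exactly the gradient analogue of the Forni--Kontsevich identity for $\Phi_k$, and is sound; it is worth noting that this identity is also implicit in the paper (it is precisely what forces consistency of \eqref{dvarianceformula}, \eqref{variance}, and the conclusion of Lemma~\ref{lem:det_var}), so you are not proving something extraneous. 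In short: correct, alternative in method, and self-consistent with the rest of the paper.
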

\begin{proof}
We first prove that the normalized asymptotic variance of the random variable $F_k$ coincides with that of $F_k -u$.

It follows by an immediate application of \cite[Lemma 3.1]{For02} that, for any smooth function $F$ and any function $u\in W^{2,\infty}$ on the Poincar\'e
disk, with respect to hyperbolic geodesic polar coordinates $z=(t,\theta)$, we have the formula

\begin{align*}
\frac{1}{2\pi} \frac{\partial}{\partial t} \int_0^{2\pi} (F-u)^2(t,\theta)d\theta &= \frac{1}{2} \tanh(t) \frac{1}{|D_t|} \int_{D_t} \laplacian_{L_\omega}((F-u)^2) \omega_P \\
 &= \tanh(t) \frac{1}{|D_t|} \int_{D_t} (F-u)\laplacian_{L_\omega}(F-u) \omega_P \\&+ \tanh(t) \frac{1}{|D_t|} \int_{D_t} |\gradient_{L_\omega}(F-u)|^2 \omega_P
\end{align*}
where $|D_t|$ is the hyperbolic area element of the disk $D_t$ of geodesic radius $t>0$ that is centered at the origin, and $\omega_P$ is the hyperbolic area on the Poincaré disk. 

\smallskip
By applying the above formula to the function $F(t,\theta)= F_k (g_t r_\theta, {\bf v})$, for every ${\bf v} \in \bP(\mathbf{H}^{(k)})$ (we recall that $\bP(\mathbf{H}^{(k)})$ denotes the projective Hodge bundle over an $\SL(2, \R)$-invariant sub-orbifold $X$ of the moduli space of Abelian differentials)
and $U(t, \theta, \bfv)  = u(g_t r_\theta (\omega), \bfv)$  and by integrating over $\bP(\mathbf{H}^{(k)})$ with respect to the  harmonic measure $\hat \nu_*$, we have
\begin{align*}
\int_{\bP(\mathbf{H^{(k)}})}& \frac{\partial}{\partial t} (F_k-U)^2 (t,\theta, \bfv)d\hat{\nu}_* \\ &= \int_{\bP(\mathbf{H}^{(k)})} \frac{\tanh(t)}{\sinh^2(t)} \int_{0}^{t} (F_k-u)\laplacian_{L_\omega}(F_k-U)(\tau,\theta, \bfv ) d(\sinh^2\tau)d\hat{\nu}_*  \\&+ \int_{\bP(\mathbf{H}^{(k)})} \frac{\tanh(t)}{\sinh^2(t)} \int_{0}^{t} |\gradient_{L_\omega}(F_k-U)|^2(\tau,\theta, \bfv) d(\sinh^2\tau)d\hat{\nu}_*
\end{align*}

By integrating over $[0,T]$ with respect to $dt$, we have

\begin{align}
\frac{1}{T}&\left[\int_{\bP(\mathbf{H}^{(k)})}[(F_k-u)^2(g_T, \textbf{v})-(F_k-u)^2(g_0, \textbf{v})]d\hat{\nu}_*\right] \\&=  \frac{1}{T} \int_0^T \int_{\bP(\mathbf{H}^{(k)})} \frac{\tanh(t)}{\sinh^2(t)} \int_{0}^{t} (F_k-U)\laplacian_{L_\omega}(F_k-U) d(\sinh^2\tau)d\hat{\nu}_*   dt\label{zeroterm} \\&+ \frac{1}{T} \int_0^T \int_{\bP(\mathbf{H}^{(k)})} \frac{\tanh(t)}{\sinh^2(t)} \int_{0}^{t} |\gradient_{L_\omega}(F_k-U)|^2 d(\sinh^2\tau)d\hat{\nu}_* dt
\end{align}
By Eq. \eqref{poissoneq}, we observe that 
\begin{align*}
&\laplacian_{L_\omega}(F_k-U)(t,\theta, \bfv) \\ 
& \quad = \laplacian_{L_\omega}(\sigma_k(g_t r_\theta\omega, \text{\textbf{v}})) -\laplacian_{L_\omega}(\lambda_{(k)}  t)
\\& \quad =  2\lambda_{(k)} (1- \coth(2t))  \to 0 \end{align*}
exponentially as $t\to\infty$. It follows therefore that \eqref{zeroterm} converges to 0 as $t\to \infty$, and we thus have that 
\begin{align*}
\lim_{T\to\infty}\frac{1}{T}\int_{\bP(\mathbf{H}^{(k)})}&(F_k-u)^2(g_T, \textbf{v})d\hat{\nu}_*\\ &=\int_{\bP(\mathbf{H}^{(k)})} |\Psi_k(E^+_k(\omega))- (\lambda_{(k)},0) -\gradient_{L_\omega} u(\omega,E^+_k(\omega))|^2 d\nu \,.
\end{align*}
\begin{rem}
The above steps follow closely the outline of the proof of \cite[Theorem 1]{FMZ12}, which we refer to for more details.
\end{rem}

We have therefore shown that the normalized asymptotic variance of $F_k-u$ is strictly positive if the function $ \Psi_k \circ E^+_k - \lambda_{(k)} -\cD u$ is not identically zero. The final claim in the argument is that the asymptotic variance of $F_k$ is no smaller than that of $F_k-u$, and this follows by an immediate application of the triangle inequality, as follows
\begin{align*}
\left[\frac{1}{T}\int_{\bP(\mathbf{H}^{(k)})} (F_k-u)^2(g_T, \textbf{v})d\hat\nu_* \right]^{1/2}
&\leq  \left[\frac{1}{T}\int_{\bP(\mathbf{H}^{(k)})} F_k^2(g_T, \textbf{v})d\hat \nu_* \right]^{1/2} \\ &+  \left[\frac{1}{T}\int_{X} u^2(g_T\omega, \bfv)d\hat\nu_* \right]^{1/2} \\
&=  \left[\frac{1}{T}\int_{\bP(\mathbf{H}^{(k)})} F_k^2(g_T, \textbf{v})d\hat \nu_* \right]^{1/2} \\ &+  \frac{1}{\sqrt{T}}\|u\|_{L^2(\bP(\bfH^{(k)}),\hat \nu)} 
\end{align*}
together with the square integrability of $u$. We have therefore shown that if the asymptotic variance for the deterministic cocycle $V^{(k)}_{g_\infty}$ is  equal to
zero, then 
\begin{align}
\Psi_k \circ E^+_k(\omega) - (\lambda_{(k)},0) -\gradient_{L_\omega}u\circ E^+_k(\omega) =0  \quad \nu\text{-almost everywhere.} \label{zerovariancecond}
\end{align}
\end{proof}

Now let $\{X,Y, \Theta\}$ be the standard generators of the Lie algebra of $\SL(2, \R)$ corresponding to the geodesic flow, the orthogonal geodesic flow and the maximal compact subgroup
 $\SO(2, \R)$, given by the formulas: \begin{align*}X=\begin{pmatrix} 1/2&0\\ 0&-1/2 \end{pmatrix}, \quad Y=\begin{pmatrix} 0&1/2\\ 1/2&0 \end{pmatrix}, \quad \Theta=\begin{pmatrix} 0&-1/2\\ 1/2&0 \end{pmatrix}.\end{align*}

In order to prove a crucial regularity result for the unstable Oseledets subspace under the assumption of zero variance, it will be useful to derive the following lemma for the lift of our $\SO(2,\bR)$-invariant function $u$ from $\SO(2,\bR)\backslash \bP(\bfH)$ to $\bP(\bfH)$ (which we continue to call $u$ by abuse of notation): 

\begin{lemma} The hyperbolic gradient (in the radial and tangential directions) and Laplacian is given (in the weak sense) by the following formulas:
$$
\begin{aligned}
\nabla_{L_\omega}  u(g_t r_\theta, \bfv) &= 2 \left (X u, Y u \right) (g_t r_\theta,  \bfv) \, \\ \laplacian_{L_\omega} u(g_t r_\theta, \bfv)  &= 4 (X^2 + Y^2)u (g_t r_\theta, \bfv) \, . 
\end{aligned}
$$
\end{lemma}
\begin{proof}
By definition we have 
$$
\nabla_{L_\omega}  u(g_t r_\theta, \bfv) = 2(Xu) (g_t r_\theta \omega, \bfv)\,.
$$
The computation of the angular derivative
is based on the formula
$$
\begin{aligned}
 \exp (\theta \Theta)\exp (2t X)  &=
\exp (2 tX) \exp (-2tX) \exp (\theta \Theta) \exp(2tX)  \\
&=
\exp (2tX) \text{Ad}_{\exp (-2 tX)}( \exp (\theta \Theta))  \\
&=\exp (2tX) \exp ( e^{\text{ad}_{-2tX}}  (\theta\Theta))  \\ &=  \exp (2tX) \exp(\theta ( \cosh (2t) \Theta + \sinh(2t) Y    ) ) \,.
\end{aligned}
$$
The above formula is computed with respect to standard generators $\{X,Y,\Theta\}$ of
the Lie algebra $\mathfrak{sl}(2,\R)$ which
satisfy the commutation relations
$$
[\Theta, X]=Y \,, \quad [\Theta, Y]=-X \,, \quad [X,Y] = -\Theta \,.
$$
Under the convention in \cite{For02}, the curvature of the Poincar\'e plane is taken to be $-4$, which corresponds to the choice of generators $\{2X, 2Y, \Theta\}$. 

\smallskip
It follows that 
$$
\frac{\partial}{\partial \theta}  u(g_t r_\theta, \bfv) =   \sinh(2t) Y  u(g_t r_\theta, \bfv)\,.
$$
We can now compute the hyperbolic gradient and Laplacian. We have
$$
\begin{aligned}
\nabla_{L_\omega} u(g_t r_\theta, \bfv) &=
\left(\frac{\partial}{\partial t }  u(g_t r_\theta, \bfv),
\frac{2}{\sinh(2t)} \frac{\partial}{\partial \theta}  u(g_t r_\theta, \bfv)\right)
 \\&= 2 (Xu, Y u) (g_rr_\theta \omega,\bfv)\,.
\end{aligned}
$$
By the commutation relation $[\Theta, Y]=-X$, we also have
$$
\begin{aligned}
&\laplacian_{L_\omega} u(g_t r_\theta, \bfv) =\left(
\frac{\partial^2}{\partial t^2} + 2\coth(2t)\frac{\partial}{\partial t}+\frac{4}{\sinh^2(2t)}\frac{\partial^2}{\partial \theta^2}\right) u(g_t r_\theta, \bfv) \\ 
& \quad= (4 X^2 + 4\coth(2t)X + \frac{4}{\sinh^2(2t)} (\cosh(2t) \Theta + \sinh(2t) Y)^2)u(g_t r_\theta, \bfv)
\\
& \quad= (4 (X^2 + Y^2 + \coth^2(2t)\Theta^2 + 2 \coth (2t) Y \Theta))u(g_t r_\theta, \bfv)
\\
& \quad= 4 (X^2 + Y^2)u(g_t r_\theta,\bfv) \,.
\end{aligned}
$$

The computation is completed.
\end{proof}

\smallskip
\noindent Let $h^-_t = \begin{pmatrix} 1 & 0 \\ t & 1 \end{pmatrix}$ denote the stable horocyclic (unipotent) subgroup of the group $\SL(2,\R)$.
In our notation, the generator of the flow $h^-_t$ is the vector field $H:=Y+ \Theta$.
In fact, we have
$$
[X, Y+\Theta] =- \Theta -Y = -(Y+\Theta)\,
$$
and we follow the convention that $\SL(2,\R)$ acts on itself and its quotients on the right by multiplication on the left.

\begin{lemma} 
\label{lemma:Lipschitz} Assume the variance $V^{(k)}_{g_\infty} =0$.  Then for $\nu$-almost all $\omega \in X$ the function
$$
\Psi_k (E^+_k(h^-_s \omega) ) \, , \quad s \in \R\,,
$$
is a Lipschitz function. 
\end{lemma}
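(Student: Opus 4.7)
The starting point is the pointwise identity furnished by Lemma \ref{lem:det_var} under the zero-variance hypothesis:
\[
\Psi_k \circ E^+_k(\omega) - (\lambda_{(k)}, 0) - 2\cD U(\omega) = 0 \qquad \nu\text{-a.e. on } X.
\]
Recalling that $\cD U = (XU, HU)$ with $H = Y+\Theta$ the stable horocycle generator, the two components read
\[
XU = \tfrac{1}{2}\bigl(\Re(\Psi_k \circ E^+_k) - \lambda_{(k)}\bigr), \qquad HU = \tfrac{1}{2}\Im(\Psi_k \circ E^+_k).
\]
Since $|\Psi_k|\leq k$ pointwise on $\bP(\textbf{H}^{(k)})$, both $XU$ and $HU$ lie in $L^{\infty}(X,\nu)$.

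Next, I would invoke the $\SL(2,\R)$-equivariance of the Oseledets decomposition. From the commutation $g_t h^-_s = h^-_{s e^{-2t}} g_t$ combined with the Oseledets theorem, one obtains $E^+_k(h^-_s \omega) = h^-_s \cdot E^+_k(\omega)$, so the curve $s\mapsto (h^-_s\omega, E^+_k(h^-_s\omega))$ is the $h^-_s$-orbit in the bundle of the fixed point $(\omega, E^+_k(\omega))$. Since $\Psi_k$ is smooth on $\bP(\textbf{H}^{(k)})$ and $h^-_s$ acts smoothly, the function $s\mapsto \Psi_k(E^+_k(h^-_s\omega))$ is $C^\infty$ for $\nu$-a.e.\ $\omega$, and its derivative coincides with the weak $H$-derivative applied to the a.e.\ identity above. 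Using $[X,H]=-H$ (i.e.\ $HX = XH + H$), this derivative equals
\[
\tfrac{d}{ds}\Psi_k(E^+_k(h^-_s\omega)) = 2\bigl(HXU, H^2 U\bigr)(h^-_s\omega) = 2\bigl(XHU + HU,\,H^2 U\bigr)(h^-_s\omega).
\]
Thus uniform Lipschitz regularity in $s$ for $\nu$-a.e.\ $\omega$ reduces to showing that $XHU$ and $H^2 U$ belong to $L^\infty(X,\nu)$.

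The Poisson equation \eqref{poissoneq} supplies the combined bound: since $\cL = -(X^2+H^2+X)$ and $\cL U = \tfrac{1}{2}(\Phi_k\circ E^+_k - \lambda_{(k)})$ is essentially bounded (because $|\Phi_k|\leq 2k$), and $XU \in L^\infty$ already, we deduce $X^2 U + H^2 U \in L^\infty$. To separate these, I would apply $X$ to the two component identities of paragraph one to obtain
\[
X^2 U = \tfrac{1}{2} X(\Re\Psi_k\circ E^+_k), \qquad XHU = \tfrac{1}{2} X(\Im\Psi_k\circ E^+_k).
\]
Here each right-hand side is the derivative, along the smooth geodesic lift of the Oseledets section (well-defined by equivariance), of a bounded function on the projective bundle. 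By Formula \eqref{eq:PhiPsi} we have $\Psi_k = \operatorname{tr} B^{(k)}$, and the $\SL(2,\R)$-variation formulas for the second fundamental form $B^{(k)}$ (as computed in \cite{For02, FMZ12}) express $X B^{(k)}$ as a polynomial in the bounded matrix-valued functions $B^{(k)}$ and $H^{(k)} = B^{(k)}\bar B^{(k)}$. In particular, $X^2 U$ and $XHU$ are bounded combinations of the functions $\Psi_k, \Phi_k$ on the bundle, hence both lie in $L^\infty$. Combined with the sum estimate, also $H^2 U \in L^\infty$.

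Putting everything together, $H(\Psi_k\circ E^+_k) \in L^\infty(X,\nu)$, so for $\nu$-a.e.\ $\omega$ the smooth function $s\mapsto \Psi_k(E^+_k(h^-_s\omega))$ has derivative essentially bounded uniformly in $s$ along its orbit, yielding the claimed Lipschitz property. The principal technical obstacle in this plan is precisely the separation of $X^2 U$ and $H^2 U$ into individually $L^\infty$ functions: the Poisson equation by itself controls only their sum, and the separation relies on the explicit formula $\Psi_k = \operatorname{tr} B^{(k)}$ together with the known variation of $B^{(k)}$ along the Teichm\"uller flow, which exhibits $X\Psi_k$ as a uniformly bounded function on $\bP(\textbf{H}^{(k)})$.
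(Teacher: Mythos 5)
There is a genuine error in the second paragraph: the claim that $E^+_k(h^-_s \omega) = h^-_s \cdot E^+_k(\omega)$ is false. The commutation relation $g_t h^-_s = h^-_{s e^{-2t}} g_t$ tells you that two points on the same $h^-$-orbit have the same \emph{forward} $g_t$-asymptotics, hence the same \emph{stable} Oseledets subspace $E^-_k$; it says nothing about the unstable one. The unstable subspace $E^+_k$ is equivariant under $g_t$ and under the \emph{unstable} horocycle $h^+_s$, but is generically only measurable transversally to those flows, in particular along $h^-$-orbits. Were your equivariance claim true, the curve $s\mapsto \Psi_k(E^+_k(h^-_s\omega))$ would be $C^\infty$ with uniformly bounded derivatives unconditionally, and the lemma (whose entire content is that the zero-variance hypothesis forces this regularity) would be vacuous. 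So the $C^\infty$ assertion cannot be taken a priori, and the differentiations in your third display need a different justification.

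Fortunately the rest of the plan is sound and does not actually depend on that false claim, and it coincides in substance with the paper's proof. The weak regularity you need — that $HU$, $H^2U$, $XHU$ make sense in $L^2$ — comes from $U \in W^{2,2}(X,\nu)$, which Corollary \ref{CorAGT} supplies as part of solving the Poisson equation; you do not need the orbit map to be $C^\infty$. Your key steps then go through exactly as in the paper: (i) the zero-variance identity gives $XU = \tfrac12(\Re\Psi_k\circ E^+_k - \lambda_{(k)})$ and $HU = \tfrac12\Im\Psi_k\circ E^+_k$, both in $L^\infty$; (ii) the $g_t$-equivariance of $E^+_k$ (which \emph{is} valid) plus the boundedness of the $X$-derivative of $B^{(k)}$ on the bundle yield $X^2U$, $XHU \in L^\infty$; (iii) the Poisson equation $\cL U = -(X^2+H^2+X)U = \tfrac12(\Phi_k\circ E^+_k - \lambda_{(k)})$ then separates out $H^2U \in L^\infty$; (iv) the commutator identity $HX = XH + H$ gives $H\Re(\Psi_k\circ E^+_k)$ bounded as well. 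Replace the false $h^-$-equivariance and a priori smoothness with an appeal to $U\in W^{2,2}(X,\nu)$, and your argument becomes correct and essentially identical to the one in the paper.
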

\begin{proof}
Since by assumption the variance $V^{(k)}_{g_\infty} =0$ the identity \eqref{zerovariancecond} holds, hence in particular we have
$$
X  u \circ E^+_k(\omega) = \frac{1}{2} \left( \Re \,(\Psi_k \circ E^+_k) -\lambda_{(k)} \right)\,.
$$
Since the unstable space $E^+_k$ of the cocycle is invariant under the Teichm\"uller flow, it follows that the function $\Psi_k \circ E^+_k$ is
for almost all $\omega \in X$ smooth along the Teichm\"uller orbit, and 
by a similar argument along the orbit of the unstable Teichm\"uller horocycle flow. It follows then that the function $u$ is infinitely differentiable, with derivatives uniformly bounded almost everywhere, along 
the geodesic flow orbit for almost all $\omega\in X$. 

By the construction of the function $u$ as a solution of the equation 
$$
 \left(X^2 + Y^2 \right) u \circ E^+_k(\omega) = \frac{1}{4}\laplacian_{L_\omega} u \circ E^+_k(\omega) = \frac{1}{2} (\Phi_k\circ E^+_\omega -\lambda_{(k)}) \,,
$$

since $Xu\circ E^+_k$, $X^2 u \circ E^+_k$ and $\laplacian_{L_\omega} u \circ E^+_k$ are bounded and, by definition $H= Y+\Theta$, it follows that, for $\nu$-almost all
$\omega \in X$, we have
\begin{align*}
\frac{d^2}{ds^2} u \circ E^+_k (h_s^{-} \omega)= H^2  u \circ E^+_k (h^-_s\omega) &= YH  u \circ E^+_k (h^-_s\omega) \\ &=Y^2  u \circ E^+_k (h^-_s\omega)
\end{align*}
is a bounded function, which in turn implies
that
$$
 \frac{1}{2}\,\Im (\Psi_k \circ E^+_k) (h^-_s \omega) = Hu \circ E^+_k(h^-_s \omega) 
$$

has a uniformly bounded derivative, hence it is a Lipschitz function.  For the 
real part of the function, we argue that
$$
\begin{aligned}
H \Re( \Psi_k \circ E^+_k) &= H  X u \circ E^+_k  = [H,X] u \circ E^+_k + XH u \circ E^+_k \\ & = H u\circ E^+_k + X Hu\circ E^+_k  =
 \frac{1}{2} (I+ X)\Im( \Psi_k \circ E^+_k)
\end{aligned}
$$
which is again a function uniformly bounded almost everywhere, hence the function
$\Re( \Psi_k \circ E^+_k)$ is also Lipschitz
along almost all horocycle orbits.
\end{proof}

Let ${\bf H}$ be an
$\SL(2, \R)$-invariant, symplectic subbundle of the Hodge bundle, symplectically orthogonal to the tautological subbundle.

\begin{cor}\label{cor:freezing}
  If  the Lyapunov spectrum of the Kontsevich--Zorich cocycle on
$\bf H$ is simple, that is, if $\lambda_1> \dots >\lambda_h$,
then the deterministic Kontsevich--Zorich cocycle on ${\bf H}^{(1)}= {\bf H}$ has strictly positive variance, that is,  $V^{(1)}_{g_\infty} > 0$. 
\end{cor}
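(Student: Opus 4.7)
I argue by contradiction, assuming $V^{(1)}_{g_\infty} = 0$. By Lemma~\ref{lem:det_var} applied with $k=1$, this forces the pointwise identity
$$
\Psi_1(E^+_1(\omega)) = (\lambda_1, 0) + 2\cD U(\omega), \qquad \nu\text{-a.e.}
$$
Writing $H := Y + \Theta$ and identifying $\bC \simeq \R^2$, this reads $\Re\,\Psi_1\circ E^+_1 = \lambda_1 + 2 XU$ and $\Im\,\Psi_1\circ E^+_1 = 2HU$. A simplification peculiar to $k=1$: the second fundamental form restricted to a line is the scalar $B^{(1)}$, so $\Phi_1 = |\Psi_1|^2$. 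Substituting into the Poisson equation $\cL U = \tfrac12(\Phi_1 \circ E^+_1 - \lambda_1)$ yields a closed nonlinear first-order PDE
$$
-(X^2 + H^2 + X)U = \tfrac12(\lambda_1^2 - \lambda_1) + 2\lambda_1 XU + 2\bigl((XU)^2 + (HU)^2\bigr).
$$

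Next, I would bootstrap regularity. Lemma~\ref{lemma:Lipschitz} applied with $k=1$ asserts that $s \mapsto \Psi_1(E^+_1(h^-_s\omega))$ is Lipschitz along $\nu$-a.e.\ stable horocycle orbit; in particular $H \,\Psi_1\circ E^+_1$ is essentially bounded, hence so is $H^2 U$. Combining with the nonlinear PDE above and the commutation $[X, H] = -H$ from the proof of Lemma~\ref{lemma:Lipschitz}, one can propagate this bound to control $X^2 U$ and $XHU$ a.e., so $\Psi_1 \circ E^+_1$ itself becomes Lipschitz along $\nu$-a.e.\ trajectory of the $\SL(2,\R)$-action.

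The principal obstacle is to convert this orbit-wise Lipschitz regularity of $\Psi_1 \circ E^+_1$ into a contradiction with simplicity $\lambda_1 > \lambda_2$ and the strong irreducibility of $\bfH$. My intended route is as follows: the map $v \mapsto \Psi_1(v) = B_\omega(v,v)$ is a smooth non-degenerate quadratic map on fibers of $\bP(\bfH)$, and under simplicity the Filip rank formula ensures $B_\omega$ has maximal rank outside a closed proper analytic locus, so its dominant eigenline is uniquely determined by the values of $\Psi_1$. Consequently the Lipschitz regularity of $\Psi_1 \circ E^+_1$ transfers to Lipschitz regularity of the section $\omega \mapsto E^+_1(\omega) \in \bP(\bfH)$ along almost every $\SL(2,\R)$-orbit. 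Such a continuous $\SL(2,\R)$-equivariant line subbundle of $\bfH$ would split off an invariant proper subbundle, directly contradicting strong irreducibility and finishing the argument. The hard part is precisely this transfer of regularity from the image $\Psi_1 \circ E^+_1$ back to the section $E^+_1$ while carefully excising the exceptional locus where $B_\omega$ is insufficiently non-degenerate; this is where the bulk of the technical work lies.
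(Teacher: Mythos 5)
Your proposal diverges substantially from the paper's argument, and the divergence is not just stylistic: the crucial final step, as you describe it, cannot work. After invoking Lemma~\ref{lemma:Lipschitz} to get Lipschitz regularity of the scalar-valued function $s\mapsto\Psi_1(E^+_1(h^-_s\omega))$, you propose to ``transfer'' this to Lipschitz regularity of the section $\omega\mapsto E^+_1(\omega)$ itself and thereby exhibit a continuous invariant line bundle. But $\Psi_1(\omega,\cdot)=B_\omega(\cdot,\cdot)$ is a single complex number; its level sets in $\bP(\bfH_\omega)$ are real-analytic submanifolds of real codimension $2$, not points, so the value $\Psi_1(E^+_1(\omega))$ does not determine the line $E^+_1(\omega)$, and no amount of regularity of the composite $\Psi_1\circ E^+_1$ can recover regularity of $E^+_1$. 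Moreover, your assertion that $E^+_1$ is ``the dominant eigenline of $B_\omega$'' is false: $E^+_1$ is a dynamically defined Oseledets subspace, while eigenlines of $B_\omega$ are determined by the fiberwise geometry at $\omega$, and there is no reason for them to coincide. Filip's rank formula gives you that $B_\omega$ has full rank under simplicity, but that does not make $\Psi_1$ injective on $\bP(\bfH_\omega)$.

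There is also a smaller but real error earlier on. For $k=1$, from formula~\eqref{eq:PhiPsi} one has $\Phi_1(\omega,{\bf v})=2\operatorname{tr}(H^{(1)}_\omega({\bf v}))-|B^{(1)}_\omega({\bf v})|^2$, and $H^{(1)}_\omega({\bf v})$ is the $(1,1)$-entry of the \emph{full} $h\times h$ curvature matrix $B_\omega\bar B_\omega$, namely $\sum_{l=1}^h|B_{1l}|^2$, not the square modulus $|B_{11}|^2$ of the restricted form. Thus $\Phi_1=|\Psi_1|^2$ fails in general, and the ``closed nonlinear PDE'' you derive from it does not hold; fortunately this step is not load-bearing in your sketch, but it is wrong.

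What the paper actually does avoids the regularity-transfer step entirely. Rather than trying to show $E^+_1$ is a continuous section, it uses the Lipschitz property to run a \emph{freezing argument}: define $\cW(g_{-t}\omega)$ as the $\Psi_1$-level set containing $E^+_1(g_{-t}\omega)$ (a codimension-$2$ analytic submanifold), then form the limit set
$$
\cV(\omega)=\bigcap_{s\ge 0}\overline{\bigcup_{t\ge s}\bigl\{g_t\bigl(\cW(g_{-t}\omega)\bigr):g_{-t}\omega\in\cK\bigr\}}\,.
$$
One shows $\cV$ is $\{g_t\}$-invariant and, using the Lipschitz estimate together with the fact that the Lyapunov spectrum of $\bfH$ lies in $(-\lambda,\lambda)$ for some $\lambda<1$, also $\{h^-_r\}$-invariant. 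The smallest $\{h^-_r\}$-invariant bundle $\cE$ containing $E^+_1$ sits inside $\cV$ and is $\SL(2,\R)$-invariant by elementary commutation relations. Finally, simplicity of the Lyapunov spectrum forces $\cV$ to lie inside a finite union of proper $g_t$-invariant subbundles (codimension-$2$ sums of Oseledets lines), so $\cE\subsetneq\bP(\bfH)$, contradicting strong irreducibility. This is a measurable construction that deliberately lives with the codimension-$2$ ambiguity in $\Psi_1$, which is precisely the obstruction that sinks your proposed transfer.
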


\begin{proof} The proof is by contradiction. 

\smallskip
\noindent Let us assume that $V^{(1)}_{g_\infty}=0$. 
By Lemma~\ref{lemma:Lipschitz} for $\nu$-almost all $\omega \in X$ the function
$$
\Psi_1 (E^+_1(h^-_s \omega) ) \, , \quad s \in \R\,,
$$
is a Lipschitz function. 

The strategy of the argument, based on the so-called {\it freezing argument} from \cite{CF20}, consists in deriving from the above Lipschitz property the existence of a proper $\SL(2,\bR)$-invariant subbundle of $\bP({\bf H})$, thereby contradicting the strong irreducibility assumption.

\smallskip
\noindent Recall that the function $\Psi_1$ is given by
the formula (see formula \eqref{eq:PhiPsi})
$$
\Psi_1 (\omega, \text{\bf{v}})=  \text{tr}(B^{(1)}_\omega(\text{\bf{v}})) = B_\omega(\text{\bf{v}}) \,,
\quad \text{ for } (\omega, {\bf v}) \in {\bf H}\,.
$$
Since for every $\omega \in X$ the matrix $B_\omega$ is a complex symmetric matrix, with
entries given by a complex quadratic form, 
it follows that the function $\Psi_1$ is a quadratic polynomial function (with respect to projective coordinates) on every fiber ${\bf H}_\omega$.
In addition, the function $\Psi_1$ is non-constant along circle orbits, since
$$
\Psi_1 (r_{\theta}\omega, \text{\bf{v}}) =
e^{-2i \theta} \Psi_1 (\omega, \text{\bf{v}}) \,, \quad \text{ for all } {\bf v} \in {\bf H}_\omega\,.
$$
We define a measurable subbundle of the bundle 
${\bf H}$ as follows. Since by assumption the Lyapunov exponent
$\lambda_1 >0$, it follows that
$$
\vert \Psi_1(E^+_1(\omega)) \vert >0 \quad \text{almost everywhere.}
$$
In fact, otherwise the second identity in formula \eqref{laplacianformula} (for $k=1$), since the bundle $E^+_1$ is invariant under the Teichm\"uller flow, would imply that $\lambda_1=0$. It then follows that there exists $c>0$ and a compact set $\mathcal K \subset X$ such that
\begin{equation}
\label{eq:norm_lbound}
\min_{\omega \in \mathcal{K}} \vert \Psi_1(E^+_1(\omega)) \vert \geq c>0\,.
\end{equation}

 For every $\omega\in X$
Birkhoff regular for the Teichm\"uller geodesic flow and Oseledets regular for the Kontsevich--Zorich cocycle, and for every forward return time $t > 0$ of the Teichm\"uller geodesic flow to the compact set $\cK \subset X$, we let 
\begin{equation}
\label{eq:cW}
\cW (g_{t}\omega):=  \Psi_1^{-1} \left\{ \Psi_1 (E^+_1 (g_{t} \omega)) \right\}\,.
\end{equation}
We note that $\cW(g_{t}\omega)$ is a real analytic submanifold of (real) codimension $2$
which contains the point $E^+_1(g_{t}\omega) \subset  \bP (H_{g_{t}\omega})$.
We then let, for all $\omega \in X$,
$$
\cV (\omega) = \bigcap_{s\geq 0}  \overline{\bigcup_{t\geq s}  \{  g_{-t} \left( \cW(g_{t}\omega)\right) \vert g_{t}\omega \in \cK\} } .
$$
We remark that since $\lambda_1>0$, by Oseledets theorem the set $\cV(\omega)$ is contained in a finite union of $g_t$-invariant subspaces.

By definition, for $\nu$-almost all $\omega \in X$, the set $\cV(\omega) 
\subset \bP({\bf H}_\omega)$ is  closed and non-empty, since it contains $E^+_1(\omega)$. It is straightforward to derive from its definition that $\cV$ is a (measurable) $\{g_t\}$-invariant subset. 

The crucial point of the argument is to prove that
$\cV$ is invariant under the stable Teichm\"uller  horocycle flow $\{h^-_s\}$.

Let ${\bf v} \in g_{-t} \left( \cW(g_{t}\omega)\right)$. By definition,
$\Psi_1 (g_{t} ({\bf v})) \in \Psi_1 (E^+_1(g_{t}(\omega)))$. There exists a constant $C_\cK$ such that for any fixed $r>0$
the distance 
$$
d(g_{t}\omega, g_{t} (h^-_r \omega))= 
d(g_{t}\omega, h^-_{e^{-2t} r} (g_{t} \omega)) \leq C_\cK r  e^{-2t} \,,
$$
hence by the Lipschitz property of the function
$\Psi_1 \circ E^+_1$ (which holds by Lemma 
\ref{lemma:Lipschitz}) we have that there exists
a constant $C'_\cK$ such that
$$
\Vert (\Psi_1 \circ E^+_1)(g_{t}\omega) -  (\Psi_1 \circ E^+_1) (g_{t} (h^-_r \omega)))\Vert  \leq C'_\cK r e^{-2t}\,.
$$
We also have (with respect to the Hodge metric)
$$
d \left( g_{t} (h^-_r ({\bf v})), g_{t} ({\bf v})\right) \leq r e^{-2t}\,,
$$
so that we have the estimate
$$
\begin{aligned}
&\Vert (\Psi_1(g_{t} (h^-_r {\bf v})) - 
\Psi_1(E^+_1 ( g_{t} h^-_r \omega)) \Vert \\ & \quad \leq 
\Vert \Psi_1(g_{t} (h^-_r {\bf v})) - 
\Psi_1(g_{t} ({\bf v})) \Vert  + 
\Vert  \Psi_1(g_{t} ({\bf v})) - \Psi_1(E^+_1 ( g_{t} h^-_r \omega)) \Vert  \\ & \quad =
\Vert \Psi_1(g_{t} (h^-_r {\bf v})) - 
\Psi_1(g_{t} ({\bf v})) \Vert  + 
\Vert  \Psi_1( E^+_1 (g_{t} (\omega)) - \Psi_1(E^+_1 ( g_{t} h^-_r \omega)) \Vert \\
& \quad \leq \Vert D\Psi_1 \Vert_\cK\, d\Big(g_{t} (h^-_r {\bf v}), g_{t} ({\bf v})\Big) +
\Vert  \Psi_1( E^+_1 (g_{t} (\omega)) - \Psi_1(E^+_1 ( g_{t} h^-_r \omega)) \Vert\,.
\end{aligned}
$$
hence there exists a constant $C''_\cK>0$ so that we have the inequality
$$
\begin{aligned}
&\Vert (\Psi_1(g_{t} (h^-_r {\bf v})) - 
\Psi_1(E^+_1 ( g_{t} h^-_r \omega)) \Vert \leq C^{''}_\cK r e^{-2t}\,.
\end{aligned}
$$

It follows that, by the lower bound in formula \eqref{eq:norm_lbound} and by Lemma \ref{lemma:crit}, for sufficiently large $t>0$, there exists a constant $C^{(3)}_\cK>0$ such that
$$
d \left( g_{t} (h^-_r {\bf v}), \cW (g_{t} h^-_r \omega)\right)  \leq C^{(3)}_\cK r e^{-2t}\,.
$$
Next we claim that there exists $\lambda <1$ such that the above estimate implies that there exists
a constant $C^{(4)}_\cK >0$ such that
$$
d\Big(h^-_r {\bf v}, g_{-t}\cW (g_{t} h^-_r \omega)\Big)  \leq C^{(4)}_\cK r e^{-2(1-\lambda)t}\,.
$$
The above conclusion follows from the fact that there exists $\lambda \in (0,1)$ such that,
on the symplectic orthogonal of the tautological bundle, the Lyapunov spectrum is contained in the interval $(-\lambda, \lambda)$. It then follows by Oseledets theorem that for every Birkhoff generic and regular $\omega\in X$ and for every ${\bf v}$, ${\bf w} \in \bP(\bfH_\omega)$,
$$
\limsup_{t\to +\infty} \frac{1}{t} \log d(g_t({\bf v}), g_t({\bf w}))  \leq 2 \lambda\,.
$$
We have thus proved that, for every $s>0$,
$$
h^-_r ({\bf v}) \in \overline{\bigcup_{t\geq s}  \{  g_{-t} \left( \cW (g_{t}h^-_r\omega)\right) \vert g_{t}h^-_r\omega \in \cK\} }\,,
$$
hence $h^-_r ({\bf v}) \in \cV (h^-_r\omega)$, for all  ${\bf v} \in g_{-t} (\cW(g_{t} \omega)))$.  Since $\cV ( h^-_r\omega)$ is closed, it follows that, for every $r\in \R$,
$$
h^-_r \left( \cV (\omega) \right)
\subset \cV (h^-_r\omega)\,,
$$
and, since the reverse inclusion can be proved by reversing the time in the horocycle flow, we have proved the invariance of the bundle $\cV$
under the unstable Teichm\"uller horocycle flow.

\smallskip
\noindent  We claim that by the construction of the bundle $\cV$, the unstable bundle $E^+_1 \subset \cV$. In fact, by the definition in formula \eqref{eq:cW} we have that, for almost all $\omega \in X$ and for all $t\in \R$, 
$$
E^+_1 (g_{t} \omega) \subset \cW(g_{t}\omega)\,.
$$
and since $E^+_1$ is a $\{g_t\}$-invariant bundle, it follows that, for all $t\geq 0$,
$$
E^+_1 (\omega) \subset g_{-t} \cW(g_{t}\omega)
$$
which implies the claim.

 We can then define an $\SL(2, \R)$-invariant subbundle as follows.  Let $\cE$ denote the smallest measurable  forward $\{h^-_r\}$-invariant bundle  which contains $E^+_1$.  In other terms, for almost
all $\omega \in X$, let 
$$
\cE(\omega) := \sum_{r\geq 0} 
h^-_r E^+_1 (h^-_{-r} \omega) \,.
$$
We note that, by the above definition, $\cE \subset \cV$ since $E^+_1 \subset \cV$, and the latter bundle is  $\{h^-_r\}$-invariant (as well as $\{g_t\}$-invariant).

We then prove that the bundle $\cE$
is $\SL(2,\bR)$-invariant. It is clearly forward $\{h^-_r\}$-invariant by definition. Let us
prove that it is $\{g_t\}$-invariant. 

By the commutation relation and by the $\{g_t\}$-invariance of the bundle $E^+_1$,
for almost all $\omega \in X$ and for all $t, r\in \R$, we have
$$
\begin{aligned}
g_t \left( h^-_r E^+_1 (h^-_{-r} \omega)\right) &=
(h^-_{e^{-t} r}\circ g_t ) E^+_1 (h^-_{-r} \omega) \\ &=  h^-_{e^{-t} r} \left( E^+_1 ( g_t\circ  h^-_{-r} \omega) \right) =
h^-_{e^{-t} r} E^+_1 ( h^-_{-e^{-t}r} \circ g_t\omega)
\end{aligned}
$$
which immediately implies that, for all $t\in \R$,
$$
g_t \cE(\omega) = \cE (g_t\omega)\,,
$$
hence the bundle $\cE$ is (forward and backward) $\{g_t\}$-invariant.

\noindent Let us then prove that the bundle $\cE$ is forward $\{h^+_s\}$-invariant. For the unstable horocycle flow $\{h^+_s\}$ we have the following commutation relations. For every $r, s \in \R$, with $rs\not=-1$, let
$$
\rho(r,s) = \frac{r}{1+rs} \,, \quad \sigma(r,s) = s(1+rs) \,, \quad \tau(r,s)= \log (1+rs)\,. 
$$
We then have the commutation relations:
$$
h^+_s \circ h^-_{r} = h^-_\rho \circ h^+_\sigma \circ g_\tau \,.
$$
Since $E^+_1$ is $\{g_t\}$-invariant and $\{h^+_s\}$-invariant, it follows that we have
$$
\begin{aligned}
 h^+_s \left( h^-_r E^+_1( h^-_{-r} \omega)\right) & = h^-_\rho \circ h^+_\sigma \circ g_\tau  \left(E^+_1( h^-_{-r} \omega)\right) \\ & =
 h^-_\rho    \left(E^+_1( h^+_\sigma \circ g_\tau \circ h^-_{-r} \omega)\right)
  = h^-_\rho    \left(E^+_1( h^-_{-\rho} ( h^+_s  \omega) )\right)\,,
\end{aligned}
$$
which immediately implies that $\cE$ is
forward $\{h^+_s\}$-invariant. Finally, since
$\cE$ is $\{g_t\}$-invariant and forward
$\{h^\pm_s\}$-invariant, it follows that it is
$SL(2, \R)$-invariant as claimed.

\smallskip
\noindent Finally we remark that by the condition that  the Lyapunov spectrum is simple, it follows that  $\cV \not =\bP(\bf H)$, hence $\cE \subset \cV \not = \bP(\bf H)$. In fact, by definition, since for almost all $\omega \in X$ and for all $t \geq 0$, the real analytic sets $\cW (g_t \omega)$ have positive codimension equal to $2$, by Oseledets theorem, for almost all $\omega \in X$, the subset $\cV$ is contained in the union of finitely many proper $g_t$-invariant sub-bundles of $\bP ({\bf H})$, given by the Oseledets decomposition, namely all the codimension $2$ sums of the one-dimensional  Oseledets subspaces, in contradiction with the hypothesis that $\bf H$ is strongly irreducible.  
\end{proof}

\section{A Central Limit Theorem for Generic Sections}\label{sectionsclt}
Since our main results randomizes \emph{both} the Abelian differential $\omega\in X$ and vector $\bfv_\omega\in 
\bP(\bfH_\omega^{(k)})$ with respect to the measure $\nu^*$, it is natural to ask if our results also hold for (suitably defined) sections of $\bP(\bfH^{(k)})$. Following \cite{AF24}, we introduce the following

\begin{definition}\label{fut-gen}
We say that $\bfv=(\omega,\bfv_{\omega})$ is \emph{future-Oseledets-generic} for $\nu$ a.e. $\omega\in X$ if $\bfv$ is a (measurable) section $\bfv~:~X \to \bP(\bfH^{(k)})$ of $\bP(\bfH^{(k)})$ such that, for $\nu$-a.e. $\omega \in X$,
$$\lim_{T\to\infty} \frac{1}{T} \sigma_k(g_{T}, \bfv_\omega) = \sum_{i=1}^k \lambda_i \,.$$ . 
\end{definition}

In particular, for the deterministic cocycle, it is straightforward to derive

\begin{cor}
Under the hypothesis of Theorem \ref{dclt}, there exists a real number $V^{(k)}_{g_\infty} \geq 0$ such that for any future-Oseledets-generic section $\bfv=(\omega,\bfv_{\omega})$ of $\bP(\bfH^{(k)})$, we have
\begin{align*}
\lim_{T\to\infty} \nu\left(\left\{\omega\in X ~:~ a \leq \frac{1}{\sqrt{T}} \left(\sigma_k(g_{T}, \bfv_\omega)- T(\sum_{i=1}^k \lambda_i) \right) \leq b\right\}\right) 
\\= \frac{1}{\sqrt{2\pi V^{(k)}_{g_\infty}}}\int_a^b\exp(-x^2/ V^{(k)}_{g_\infty})dx.
\end{align*}
Moreover, if the Lyapunov spectrum is simple, then $V^{(1)}_{g_\infty} > 0$.
\end{cor}

\begin{proof}
By Oseledets' theorem, for $\nu$-a.e. $\omega$, for all $\bfv \in \bP(\text{\bf{H}}^{(k)})$, there exist constants $C>0$ and $\lambda >0$ such that, for all $t>0$,
$$
\text{dist}_{g_t\omega}  (\bfv_{g_t \omega}, E^+_k(g_t \omega)) \leq C e^{-\lambda t} \,.
$$
Since the logarithm of the Hodge norm is Lipschitz, we observe that
\begin{align*}
\sigma_k(g_{T}, \bfv_\omega)&= \sigma_k(g_{T}, \bfv_\omega)-\sigma_k(g_{T}, E^+_k(\omega))+ \sigma_k(g_{T}, E^+_k(\omega))\\
&= \sigma_k(g_{T}, E^+_k(\omega))+O(e^{-\lambda t})
\end{align*}
Under the hypothesis of Theorem \ref{dclt}, it follows therefore that 
\begin{align*}
\frac{1}{\sqrt{T}} \left(\sigma_k(g_{T}, E^+_k(\omega))- T(\sum_{i=1}^k \lambda_i) \right)
\end{align*}
satisfies the conclusion of Theorem \ref{dclt} with respect to the measure $\nu$ (instead of $\hat{\nu}$). Now assume that $\bfv\defeq(\omega,\bfv_{\omega})$ is future-Oseledets-generic for $\nu$ a.e. $\omega\in X$ (in the sense of Def. \ref{fut-gen}), then it follows that $\text{proj}_{E^+_k( \omega)}\bfv_{ \omega} \neq 0$ for $\nu$ a.e $\omega\in X$. It follows that 
\begin{align*}
\sigma_k(g_{T}, \bfv_\omega)&= \sigma_k(g_{T}, E^+_k(\omega))+\log(1+O_{\bfv_\omega}(e^{(\lambda'-\lambda)T}))+O_{\bfv_\omega}(1)
\end{align*}
for $0<\lambda'<\lambda$, and this gives us the conclusion of this corollary. 
\end{proof}

\begin{rem} See also \cite[Theorems 3.10, 3.11, 3.21 and 3.22]{AF24} for an abstract central limit theorem for generic sections that implies our corollary. See also \cite[Theorem 4.25]{AF24} for a stronger result that proves a mixing CLT for Oseledets-generic sections of $\bP(\bfH^{(k)})$ (based on our Theorem \ref{dclt} as a crucial input). 
\end{rem}

\begin{rem}By applying the results in \cite{CE13,CF20, AAE+21}, Arana-Herrera and the second-named author derive in  \cite[Theorem 4.32]{AF24} that (measurable) $\SO(2,\bR)$-invariant sections of $\bP(\bfH)$  are future-Oseledets-generic, and we refer to Section 4 of their paper for a thorough discussion on Oseledets genericity of sections and other related results.
\end{rem}
\appendix
\section{Solving Poisson's equation}\label{poisson}

For every $k\in \{1, \dots, h\}$, the group $\SL(2,\R)$ acts on the projective bundle $\bP(\text{\bf{H}}^{(k)})$
by parallel transport on the fibers.  It is then possible
to define a stochastic process on $ \SO(2,\R)\backslash \bP(\text{\bf{H}}^{(k)})$ as a lift by parallel transport of the foliated Brownian motion on $X_*:=\SO(2, \R)\backslash X$.

It is possible to find the solution to the Poisson
equation for the generator~$\laplacian$ based on the semi-group of the stochastic process.  Indeed, if $\{P^{(k)}_t\}$ denotes the semi-group of the stochastic process on the projectivized bundle $\bP(\text{\bf{H}}^{(k)})$ over $X$, given by the formula below. 

For every $x \in X$, the leaf $\SO(2,\R)\backslash \SL(2,\R) x$ can be parametrized by the Poincar\'e
disk $\bD$ with $x$ as the image of the origin. In the following formulas we write integrals on $\SO(2,\R)\backslash \SL(2,\R) x$
with respect to the coordinate $z \in \bD$. 
Let $dA(z)$ denote the (Lebesgue) area on $\bD$. 

\smallskip
For every $\SO(2, \R)$-invariant $f \in L^\infty(\bP(\text{\bf{H}}^{(k)}))$, define
$$
P^{(k)}_t(f) (x,\bfv) := \int_{\bD} p_t(x,z) f(z,\bfv)  dA(z),
$$
where $p_t(x,z)$ denotes the hyperbolic (heat) kernel 
on the leaf $\SO(2, \R) \backslash \SL(2,\R)x$ written on the Poincar\'e disk coordinate $z \in \bD$. 
\begin{rem} On the right-hand side of this definition, $(z, \bfv)$ denotes the value at $z$ of the unique parallel section (with respect to the Gauss--Manin connection) passing through $(x, \bfv)$ on the universal cover $\widetilde{L}_x$ of the leaf $L_x$. Since the universal cover is simply connected (and thus every two points are connected by a unique homotopy class of paths), and the connection is flat, parallel transport is path-independent (i.e., has trivial monodromy), and the definition is independent of the choice of a lift of \((x, \mathbf{v})\). Moreover, since the heat kernel \(p_t(x, z)\) is invariant under isometries of the Poincar\'e disk, and $f$ is defined on the quotient surface $L_x$ (under the action of the (discrete) Veech group), the function $P_t^{(k)}(f)$ descends to a well-defined function on the leaf $L_x$.
\end{rem} 

Then, formally, the Green operator of $\laplacian$ is given by the formula
$$
(G f)(x,\bfv) := \int_0^{+\infty}  P^{(k)}_t (f) dt =  \int_0^{+\infty} \int_{\bD} p_t(x,z) f(z,\bfv)  dA(z) dt\,.
$$
In fact, by a formal calculation
\begin{equation}
\label{eq:Green}
\begin{aligned}
\laplacian (G f)&(x,\bfv) = \int_{\bD} \int_0^{+\infty}  \laplacian_{L_x} p_t(x,z) f(z,\bfv) dt dA(z) \\ & = - \int_{\bD} \int_0^{+\infty} \frac{\partial}{\partial t} p_t(x,z) f(z,\bfv) dy\\  & = 
\lim_{t\to 0} \int_{\bD} p_t(x,z) f(z,\bfv) dA(z) -
\lim_{t\to +\infty} \int_{\bD} p_t(x,z) f(z,\bfv) dA(z)
\\ &= f(x,\bfv) - \lim_{t\to +\infty} \int_{\bD} p_t(x,z) f(z,\bfv) dA(z)\,.
\end{aligned}
\end{equation}
The formula gives a Green operator under the conditions
that 
$$
\begin{aligned}
&\int_0^{+\infty} \big\Vert \int_{\bD} p_t(x,z)  f(z,\bfv)  dA(z) \big\Vert_{L^2 (  \bP(\text{\bf{H}}^{(k)}) ,d\hat{\nu})} dt  < +\infty\,; \\
&\quad \lim_{t\to +\infty} \Vert \int_{\bD} p_t(x,z) f(z,\bfv)  dA(z) \Vert_{L^2(\bP(\text{\bf{H}}^{(k)}), d\hat{\nu})}=0\,.
\end{aligned}
$$
Let us now derive the following lemma: \begin{lemma}\label{brownianlem}
Suppose the (foliated) Laplacian \(\Delta\) has a spectral gap, i.e. for any zero-average $\SO(2, \R)$-invariant function \(f \in W^{2,2}(X,\nu)\),
\[
\Re \int_{X} \overline{f} \Delta f \, d\nu \leq -\frac{1}{C} \int_{X} \vert f \vert^2 \, d\nu
\]
for some constant \(C > 0\). Let  \(P_t\) be the solution semi-group, with domain on $W^{2,2}(X, \nu)$,  of the heat equation 
$$\begin{cases} 
\frac{\partial}{\partial t} P_t f  = \Delta P_t f \\ P_0  = \text{\rm Id} 
\end{cases}$$
Then, for any $\SO(2, \R)$-invariant, zero-average  essentially bounded function \(f \in W^{2,2}(X, \nu)\), we have 
\[
\|P_t f\|_{L^2(X, \nu)} \leq \|f\|_{L^2(X, \nu)} e^{-\frac{t}{C}}\,.
\]
hence the Brownian semigroup \(P_t\) extends to a bounded operator on the subspace $L^2_0(X, \nu)$ of $\SO(2, \R)$-invariant functions  such that the operator norm in the space $\mathcal L(L^2_0(X, \nu))$ of bounded operators on $L^2_0(X, \nu)$ satisfies, for all $t\in \R$, the estimate
$$
\Vert P_t \Vert_{\mathcal L(L^2_0(X, \nu))} \leq e^{-\frac{t}{C}}\,.
$$

\end{lemma}

\begin{proof}
Let $f\in W^{2,2}(X, \nu)$ be any essentially bounded $\SO(2, \R)$-invariant function of zero average and consider the \(L^2\)-norm:
\[
\|P_t f\|_{L^2(X, \nu)}^2 = \int_{X} \vert P_t f\vert^2 \, d\nu \,.
\] Differentiating with respect to 
\(t\in \R\), since  \(\frac{\partial}{\partial t} P_t  = \Delta P_t \) we have
\[
\frac{d}{dt} \|P_t f\|_{L^2(X,\nu)}^2 = 2 \Re \int_{X} \overline{P_t f} \,\frac{\partial}{\partial t} (P_t f) \, d\nu = 2 \Re \int_{X} \overline{P_t f} \,  \Delta (P_t f) \, d\nu.
\]
By the spectral gap assumption, for all zero-average essentially bounded $\SO(2, \R)$-invariant functions $f\in W^{2,2}(X, \nu)$ we have
\[
\Re \int_{X} \overline{P_t f} \, \Delta (P_t f) \, d\nu \leq -\frac{1}{C} \int_{X} \vert P_t f\vert^2 \, d\nu = -\frac{1}{C} \|P_t f\|_{L^2(X, \nu)}^2.
\]
Thus,
\[
\frac{d}{dt} \|P_t f\|_{L^2(X, \nu)}^2 \leq -\frac{2}{C} \|P_t f\|_{L^2(X, \nu)}^2.
\]
Let \(g(t) = \|P_t f\|_{L^2(X,\nu)}^2\). Then, for all $t\in \R$, we have
\[
\frac{dg}{dt} \leq -\frac{2}{C} g(t).
\]
By Gr\"onwall's inequality, the above inequality implies
\[
g(t) \leq g(0) e^{-\frac{2t}{C}}.
\]
that is, by the definition of the function $g$,
\[
\|P_t f\|_{L^2(X, \nu)} \leq \|f\|_{L^2(X, \nu)} e^{-\frac{t}{C}}. 
\] 
The above estimate can then be extended by density of essentially bounded functions in $W^{2,2}(X, \nu)$ to all 
functions (of zero average) in $L^2_0(X, \nu)$.
\end{proof}

\begin{lemma}
\label{lemma:exp}
Let ${P^{(k)}_t}$ denote the semigroup of the lift of the Brownian motion to $\bP(\text{\bf{H}}^{(k)})$. Assume
that the Kontsevich-Zorich Lyapunov exponents satisfy the strict inequality $\lambda_k >\lambda_{k+1}$. Then there exist constants $C>0$ and $\lambda>0$ such that, 
for every $\SO(2, \R)$-invariant function $f\in L^\infty(\bP(\text{\bf{H}}^{(k)}))$, Lipschitz with respect to the Hodge metric on $\bP(\text{\bf{H}}^{(k)})$,  and for all $t>0$, 
$$
\Big\Vert P^{(k)}_t (f) - \int_{\bP(\text{\bf{H}}^{(k)})} f d\hat{\nu} \Big\Vert_{L^2(\bP(\text{\bf{H}}^{(k)}) )} \leq C \Vert f \Vert_{Lip} e^{-\lambda t} \,. 
$$
\end{lemma}
\begin{proof}
 Let $f$ be an $\SO(2,\R)$-invariant bounded Lipschitz  function on $\bP(\text{\bf{H}}^{(k)})$  and let, by integrating with respect to the Lebesgue area $dA(z)$ on $\bD$,
$$
P^{(k)}_t f (x, \bfv)  =  \int_\bD   p_t(x,\bar z) f(\bar z,\bfv) dA(z)  
$$
This function is also $\SO(2,\R)$-invariant since $p_t(x,\bar z)$  is radial, $dA$ is rotationally invariant and
$(\bar z,\bfv)$ is obtained by parallel transport  of $\bfv$, with respect to the flat (Gauss-Manin) connection, from the point  $\SO(2,\R)x$ to the point in the leaf $\SO(2,\R)\backslash SL(2,\R)x$ of coordinate  $z\in \bD$.
 
 For $\nu$-almost all $x \in X$ let
$\rho_x$ denote the hyperbolic Brownian motion starting
at $x$ on the leaf $\SO(2, \R) \backslash \SL(2, \R)x$.
Since $\lambda_{k} > \lambda_{k+1}$, the unstable Oseledets space  $E^+_k$ is well-defined $\nu$-almost everywhere on $X$. For all $t >0$, let then $E^+_k(z)$ denote the
unstable space evaluated at the radial outward unit vector in the circle orbit of coordinate $z\in \bD$. 

By the Oseledets theorem,  for all $v \in \bP(\text{\bf{H}}^{(k)})$, there exist constants $C>0$ and $\lambda >0$ such that, for all $t>0$,
$$
\text{dist}_{\rho_x(t)}  (\bfv, E^+_k(\rho_{x}(t))) \leq C e^{-\lambda t} \,.
$$
Since the function $f$ is Lipschitz, it follows that the function $P^{(k)}_t f(x,\bfv)$ approaches almost everywhere  exponentially fast for diverging $t>0$ the function, defined for $\nu$-almost all $x\in X$,
$$
F_t(x) =  \int_\bD   p_t(x,z) f(z, E^+_k(z) )dA(z) \,, $$
which is  $SO(2,\R)$ invariant and $L^\infty$. By the spectral gap property of the $\SL(2, \R)$ action, or equivalently from the spectral gap property of the foliated Laplacian~$\Delta$, which follows from \cite{AGY06} and \cite{AG13}, Lemma~\ref{brownianlem} implies that there exist constants $C'>0$ and  $\lambda' >0$ such that, for any $\SO(2,\R)$-invariant $L^2$ function $F$  on $X$ (in particular for  $F=F_t$),  for all $s >0$, 
$$
\Vert P^{(k)}_s (F)  - \int_X F d\nu \Vert_{L^2(X, d\nu)}  \leq C' e^{-\lambda' s}  \Vert F \Vert_{L^2(X, d\nu)} \,.
$$
Therefore we have  that
$$
\begin{aligned}
P^{(k)}_{2t} (f)  - \int_{\bP(\text{\bf{H}}^{(k)})} f d\hat{\nu}   &=  P^{(k)}_{2t} (f) -  P^{(k)}_t (F_t) + P^{(k)}_t (F_t)  - \int_{\bP(\text{\bf{H}}^{(k)})} f d\hat{\nu} \\ &=
P^{(k)}_{2t} (f) -  P^{(k)}_t (F_t) + P^{(k)}_t (F_t)  - \int_X F_t d\nu 
\end{aligned}
$$
converges to zero exponentially in $L^2(\bP(\text{\bf{H}}^{(k)}), d\hat{\nu})$ 
since, for all $t>0$,
$$
\begin{aligned}
\Vert P^{(k)}_{2t} (f) &- P^{(k)}_t (F_t) \Vert_{L^2(\bP(\text{\bf{H}}^{(k)}),d\hat{\nu})} = \Vert P^{(k)}_t \big( P^{(k)}_t(f)   -  F_t\big) \Vert_{L^2(\bP(\text{\bf{H}}^{(k)}),d\hat{\nu})} \\ &\leq \Vert  P^{(k)}_t(f)   -  F_t \Vert_{L^\infty(\bP(\text{\bf{H}}^{(k)}),d\hat{\nu})} \leq 
C \Vert f \Vert_{Lip} e^{-t} \,,
\end{aligned}
$$
and 
$$
\begin{aligned}
\Vert P^{(k)}_t (F_t)  &- \int_X F_t d\nu  \Vert_{L^2(\bP(\text{\bf{H}}^{(k)}),d\hat{\nu})} =
\Vert P^{(k)}_t (F_t)  - \int_X F_t d\nu  \Vert_{L^2(X,d\nu)} \\
& \leq C' \Vert F_t \Vert_{L^2(X, d\nu)} e^{-\lambda' t} \leq C' \Vert f \Vert_{L^\infty(\bP(\text{\bf{H}}^{(k)}), d\hat{\nu})} e^{-\lambda' t}\,.
\end{aligned}
$$
We remark that by the Fubini theorem and by the $SL(2,\R)$-invariance of the measure $\nu$, for all $t>0$ we have 
$$
\begin{aligned}
\int_X F_t d\nu&=   \int_X \int_\bD   p_t(x,z) f(z, E^+(z) ) dA(z)   d\nu  \\ &= \int_X  \int_0^{+\infty} \frac{1}{2\pi} \int_0^{2\pi}   p_t(r) f(g_r r_\theta x, E^+(r_\theta x) ) d\theta dr   d\nu \\& =  \frac{1}{2\pi} \int_0^{+\infty}   p_t(r)  \int_X  f((x, E^+(r_\theta(x)) )  d\nu d\theta dr  = \int_{\bP(\text{\bf{H}}^{(k)})}  f d\hat{\nu}
\end{aligned}
$$
The argument is complete as the stated exponential $L^2$
convergence follows.
\end{proof}

\begin{quest}\label{expmix} Is the projective Kontsevich-Zorich cocycle on $\bP(\text{\bf{H}}^{(k)})$ {\it above the Teichm\"uller geodesic} flow exponentially mixing
(under the hypothesis of Lemma~\ref{lemma:exp} on the Kontsevich--Zorich spectrum)? Our result establishes exponential mixing for the projective Kontsevich-Zorich cocycle above Brownian motion, but our argument breaks down for the deterministic cocycle (above the Teichm\"uller geodesic flow).

\end{quest}

Let $\laplacian$ the generator of the foliated Brownian
motion on $\SO(2, \R) \backslash X$.  The operator $\laplacian$ extends naturally to an operator on $\SO(2, \R) \backslash \bP(\text{\bf{H}}^{(k)})$ which is the generator of the lift of the Brownian motion by parallel transport. Let $W^{2,2} \subset L^2(\bP(\text{\bf{H}}^{(k)}), d\hat{\nu} )$ denote the domain
of $\laplacian$ which consists of all square-integrable functions which are differentiable up to second order along the leaves of the foliation of $\SO(2, \R) \backslash \bP(\text{\bf{H}}^{(k)})$ by hyperbolic disks.

 We can now derive the following corollary (by an argument, given below, analogous to the proof of \cite[Corollary 1]{L95}):

\begin{lemma}\label{lemma:Poisson} For any $\SO(2, \R)$-invariant zero-average bounded Lipschitz function $f:(\bP(\text{\bf{H}}^{(k)}), d\hat{\nu}) \to \C$, the Poisson equation
$$
\laplacian u = f 
$$
has a unique solution $u \in W^{2,2}(\bP(\bfH^{(k)}), \hat \nu)$.
\end{lemma}
\begin{proof} 
Let $u \in L^2(\bP(\text{\bf{H}}^{(k)}), d\hat{\nu})$ be defined as
$$
u(x,\bfv) = \int_0^{+\infty} P^{(k)}_t(f) (x,\bfv) dt  := \int_0^{+\infty} \int_{\bD} p_t(x,z) f(z,\bfv)  dA(z) dt \,.
$$
The improper integral converges in $L^2$ by Lemma~\ref{lemma:exp} and by the assumption that $f$
has zero average. It is a solution of the Poisson equation by the calculation in formula \eqref{eq:Green}
and since, again by Lemma~\ref{lemma:exp},
$$
P^{(k)}_t(f) \to 0   \quad \text{ in }  L^2(\bP(\text{\bf{H}}^{(k)}), d\hat{\nu})\,.
$$
The solution of the Poisson equation is unique up to additive constants since, by ergodicity of the Teichm\"uller geodesic flow and by the Oseledets theorem, the foliation of $\SO(2, \R) \backslash \bP(\text{\bf{H}}^{(k)})$, with leaves given by the projections of $\SL(2, \R)$ orbits, is ergodic (with respect to the measure $\hat \nu$).  In fact, by \cite{Gar83}, Theorem 1 (b), any bounded Borel function which is harmonic on each leaf must be constant on almost all leaves, relative to any finite harmonic measure. (Note that the theorem is stated for compact manifolds, but proved, as remarked in \cite{Gar83}, for
any manifold provided that the foliation satisfies a condition of bounded geometry for the leaves, stated in \cite{Gar83}, \S 2, which holds in our case).

Finally, since the operator $\laplacian$ is elliptic along the leaves of the foliation, which are of the form $\SO(2, \R) \backslash \SL(2, \R) (x,\bfv)$ with $(x,\bfv)\in \bP(\text{\bf{H}}^{(k)})$, and since the function $f$ is bounded, hence $f \in L^2 (\bP(\bfH^{(k)}), \hat \nu)$, and smooth along the leaves of the foliation, it follows that the solution $u$ belongs to the space $W^{2,2}(\bP(\bfH^{(k)}), \hat \nu)$ and it is smooth along the leaves of the foliation.
\end{proof}

\section{A Martingale Central Limit Theorem}

We prove below for the convenience of the reader a 
martingale central limit theorem (CLT)  adapted to our setting.
A version of this result for the Brownian motion on hyperbolic manifolds appears in the work of Franchi and Le Jan (see \cite[Lemma VIII.7.4]{FL12}). 

\begin{thm}\label{flj:mclt} Let \((M_t)\) be a real-valued continuous martingale of the form  

\[
M_t =
\int_0^t \sigma_1(\rho_s) \, dW_s^{(1)} + \int_0^t \sigma_2(\rho_s) \, dW_s^{(2)}.
\]
for some $\SO(2, \R)$-invariant functions \(\sigma_1, \sigma_2\ \in L^2(\bP(\text{\bf{H}}^{(k)}), d\hat{\nu})\). Then the law of \(M_t / \sqrt{t}\) converges, as \(t \to \infty\), towards the centered Gaussian law with variance  

\[
V\defeq \int_{\bP(\text{\bf{H}}^{(k)})} H^2 \, d\hat\nu, \quad \text{where } H^2 := \sigma_1^2+\sigma_2^2.
\]

In addition, if \( (\tau_t)_{t \geq 0} \) is an increasing, adapted time change with \( \tau_t/t \to 1 \) almost everywhere,
then the law of \(M_{\tau_t} / \sqrt{t}\) converges in distribution, as \(t \to \infty\), to the same centered Gaussian law.

\end{thm}

\begin{proof}
Define the two-dimensional process  \(\mathbf{X}_t\defeq(X_t^{(1)}, X_t^{(2)})\) such that
\[ 
X^{(1)}_t = \int_0^t \sigma_1(\rho_s) \, dW_s^{(1)}, \quad X^{(2)}_t = \int_0^t \sigma_2(\rho_s) \, dW_s^{(2)}. 
\] 
Since \( W^{(1)} \) and \( W^{(2)} \) are independent standard Brownian motions, \( X^{(1)}_t \) and \( X^{(2)}_t \) are uncorrelated local martingales. Their joint quadratic variation matrix is 
\[ 
\langle \mathbf{X} \rangle_t = \begin{bmatrix} \langle X^{(1)} \rangle_t & \langle X^{(1)}, X^{(2)} \rangle_t \\ \langle X^{(1)}, X^{(2)} \rangle_t & \langle X^{(2)} \rangle_t \end{bmatrix} = \int_0^t \begin{bmatrix} \sigma_1^2(\rho_s) & 0 \\ 0 & \sigma_2^2(\rho_s) \end{bmatrix} ds. 
\]
By Knight’s theorem \cite[Theorem V.1.9]{RY}, there exist time changes 
\[ 
T_t^{(1)} = \inf \{ s : \langle X^{(1)} \rangle_s > t \}, \quad T_t^{(2)} = \inf \{ s : \langle X^{(2)} \rangle_s > t \} 
\] such that the time-changed processes \[ B_t^{(1)} \overset{d}{=} X^{(1)}_{T_t^{(1)}}, \quad B_t^{(2)} \overset{d}{=} X^{(2)}_{T_t^{(2)}} \] are the components of a standard 2-dimensional Brownian motion, hence \(M_t\) is distributed as a sum of two time-changed independent Brownian motions \(B_{\langle X^{(1)} \rangle_t}^{(1)}\) and \(B_{\langle X^{(2)} \rangle_t}^{(2)}\).  
By the ergodic theorem, for $i=1,2$, 
\[
V_i :=  \lim_{t\to +\infty} \frac{\langle X^{(i)} \rangle_t}{t}
= \lim_{t\to +\infty} \frac{1}{t}\int_0^t \sigma_i^2(\rho_s) ds = \int_{\bP(\text{\bf{H}}^{(k)})}  \sigma_i d \hat \nu \,,
\]
hence the limit
\[
\lim_{t\to +\infty} \frac{M_t}{\sqrt{t}}  \overset{d}{=} \lim_{t\to +\infty}
\Big( B_{\frac{\langle X^{(1)} \rangle_t}{t}}^{(1)} + B_{\frac{\langle X^{(2)} \rangle_t}{t}}^{(2)}    \Big) 
\overset{d}{=} B^{(1)}_{V_1} + B^{(2)}_{V_2} \,,
\]
has a normal distribution with zero mean and variance $V=V_1 + V_2$ as the sum of the two normally distributed, independent
random variables $B^{(1)}_{V_1}$ and $B^{(2)}_{V_2}$ with zero mean and variance $V_1$ and $V_2$, respectively. Thus $M_t/\sqrt{t}$ converges in distribution to the normal distribution $\mathcal{N}(0,V)$, completing the proof of the first statement.

As for the second statement, since, for $i=1,2$; we have
\[
\lim_{t \to \infty} \frac{\langle X^{(i)} \rangle_{\tau_t}}{t} = \lim_{t \to \infty}  \frac{\tau_t}{t} \frac{\langle X^{(i)} \rangle_{\tau_t}}{\tau_t} = \int_{\mathbb{P}(\mathbf{H}^{(k)})} \sigma_i^2 \, d\hat{\nu} = V_i \,,
\]
we have, as above
\[
\lim_{t\to +\infty} \frac{M_{\tau_t}}{\sqrt{t}}  \overset{d}{=} B^{(1)}_{V_1} + B^{(2)}_{V_2} \overset{d}{=} \mathcal{N}(0, V) \,,\] 
as claimed. The proof is therefore complete.
\end{proof}

\bibliography{mybib}
\bibliographystyle{amsalpha}
\end{document}